\documentclass{amsart}

\usepackage{amsmath}
\usepackage{amssymb}
\usepackage{mathrsfs}
\usepackage{tikz}
\usetikzlibrary{arrows.meta,positioning}

\newtheorem{theorem}{Theorem}[section]
\newtheorem{lemma}[theorem]{Lemma}
\newtheorem{proposition}[theorem]{Proposition}
\newtheorem{corollary}[theorem]{Corollary}

\theoremstyle{definition}
\newtheorem{definition}[theorem]{Definition}

\theoremstyle{remark}
\newtheorem{remark}[theorem]{Remark}

\numberwithin{equation}{section}

\newcommand{\R}{\mathbb{R}}
\newcommand{\N}{\mathbb{N}}
\newcommand{\GRP}{\mathrm{GRP}}

\begin{document}

\title[Banach-valued graph limits]{Banach-valued graph limits:\\ Graphon representability and Banach-space structure}

\author[M. Otsuka]{Motoki Otsuka}
\address{University of Tsukuba, 1-1-1, Tennodai, 305-8577, Ibaraki, Japan}
\email{s2220082@u.tsukuba.ac.jp}

\subjclass[2020]{Primary 46B22; Secondary 05C80, 46B20, 46G10}

\date{July 29, 2026}

\keywords{Banach-decorated graphs, graphon representability, Radon--Nikodym property, weak sequential completeness, reflexivity}

\begin{abstract}
We study a graph-limit problem for Banach-decorated graphs. Given a sequence of \(X\)-decorated graphs whose homomorphism densities converge against all \(X^*\)-decorated test graphs, we ask whether the limiting densities are represented by an \(X\)-valued graphon. The results connect this graph-limit problem with Banach-space structure.

If \(X^*\) is separable, then the graphon representation property for graph sequences uniformly bounded in \(L^p\) for every finite \(p\) holds if and only if \(X\) is reflexive. For Banach lattices, it is equivalent to the Radon--Nikod\'ym property. For dual Banach spaces, it is equivalent to the conjunction of the Radon--Nikod\'ym property and weak sequential completeness. In the bounded setting, the same characterization extends to arbitrary Banach spaces: for every Banach space \(X\), the representation property for uniformly \(L^\infty\)-bounded graph sequences holds if and only if \(X\) has the Radon--Nikod\'ym property and is weakly sequentially complete.
\end{abstract}

\maketitle

\section{Introduction}

In the theory of dense graph limits, a sequence of finite simple graphs whose
numbers of vertices tend to infinity is studied through the homomorphism
densities of fixed finite graphs.  For a finite simple graph \(F\), the
homomorphism density \(t(F,G_n)\) is the probability that a uniformly chosen
map from the vertex set \(V(F)\) of \(F\) to the vertex set \(V(G_n)\) of
\(G_n\) preserves edges.  A graph sequence is said to converge when these
densities converge for every \(F\).  Lov\'asz and Szegedy proved that the
resulting limiting densities can all be represented simultaneously by a
scalar graphon, namely a symmetric measurable function
\(w:[0,1]^2\to[0,1]\), and conversely that every scalar graphon arises as the
limit of a convergent sequence of finite simple graphs
\cite{lovaszSzegedy2006dense}.

The representation theorem is the starting point of a broader theory linking
finite graphs, graphons, and random sampling.  Every finite graph can be
represented as a graphon by partitioning \([0,1]\) into equal-length intervals
indexed by its vertices.  The cut distance provides a common way to compare
finite graphs and graphons.  After graphons at cut distance zero are
identified, the graphon space with the cut distance is compact.  In this space,
convergence in cut distance, convergence of all homomorphism densities, and
convergence in distribution of sampled subgraphs of every fixed size are
equivalent
\cite{borgsChayesLovaszSosVesztergombi2008denseI}.  Under any of these
equivalent notions of convergence, the limiting graphon is unique up to weak
isomorphism, the measure-preserving counterpart of vertex relabeling for
graphons
\cite{borgsChayesLovasz2010uniqueness}.  This framework has found applications,
for example, in property testing, exchangeable random graphs, statistical
physics, and large-deviation theory
\cite{lovaszSzegedy2010testing,diaconisJanson2007exchangeable,
borgsChayesLovaszSosVesztergombi2012denseII,
chatterjeeVaradhan2011largeDeviations}.

Weighted graphs and multigraphs provide a basic motivation to extend graph limit
theory beyond simple graphs.  In a weighted graph, each edge carries a real
weight.  In a multigraph, each pair of vertices has an edge multiplicity,
equal to the number of parallel edges between them.  Both models can be
treated as decorated graphs, in which each edge is labeled by a point of a
fixed space.  Lov\'asz and Szegedy developed this theory for a compact
second-countable decoration space.  They defined homomorphism densities using
test graphs whose edges are decorated by continuous functions on that space.
They proved that every convergent sequence of such graphs has a limit
represented by a graphon taking values in the probability measures on the
decoration space \cite{lovaszSzegedy2010compactDecorated}.  This compact
framework covers weighted graphs with bounded edge weights and multigraphs
with bounded edge multiplicities.  Edge-colored graphs are another special
case.

Kunszenti-Kov\'acs, Lov\'asz, and Szegedy extended this classical bounded
theory to a Banach-space framework allowing unbounded decorations.  In
particular, it can handle unbounded edge weights and
unbounded edge multiplicities.  Let \(B\) be a separable Banach space and
\(B^*\) its dual.  They consider \(B^*\)-valued graphons and define their
homomorphism densities using test graphs whose edges are decorated by elements
of \(B\).  Their main theorem starts with a sequence of such graphons that is
uniformly bounded in \(L^p\) for every finite \(p\) and whose homomorphism
densities converge.  It then gives a \(B^*\)-valued graphon that represents all
limiting densities \cite[Theorem~3.7]{kls2022decorated}.  More precisely, it is
enough to assume convergence for a countable generating family of test graphs.
Taking graphon values in \(B^*\) provides the weak-* measurability and weak-*
compactness needed for the representation argument.  On this basis, the
representation theorem is established in the predual-test/dual-target
framework: the predual \(B\) decorates the test graphs, and the dual \(B^*\) is
the graphon range.

In this paper, we study another natural framework: the target graphs and
graphons take values in a Banach space \(X\), while the test graphs are
decorated by elements of its dual \(X^*\).  Kunszenti-Kov\'acs, Lov\'asz, and
Szegedy also discuss this framework.  They point out that closed balls in a
general Banach space need not
be compact in the weak topology, and that weak measurability alone does not
imply Pettis integrability, even for bounded functions
\cite[Section~2.1]{kls2022decorated}.  These difficulties lead them not to
adopt this framework as their main setting.  Their theorem nevertheless has a
consequence for this framework when \(X^*\) is separable.  Under the canonical
embedding \(X\hookrightarrow X^{**}\), both graphs whose edge decorations lie
in \(X\) and graphons taking values in \(X\) can be viewed as
\(X^{**}\)-valued objects.  The theorem gives an \(X^{**}\)-valued graphon
representing the limiting densities, but not necessarily an \(X\)-valued one
\cite[Theorem~3.7]{kls2022decorated}.  This naturally raises the question
studied here: for which Banach spaces \(X\) does every such density limit admit
an \(X\)-valued graphon representation?

To formalize this question, we introduce the graphon representation property
(\(\GRP\)) for a Banach space \(X\) (Definition~\ref{def:grp}).  Consider a
sequence of target graphs whose edge decorations lie in \(X\).  We assume that
the sequence is uniformly bounded in \(L^p\) for every finite \(p\), and that
its homomorphism densities converge against every test graph whose edge
decorations lie in \(X^*\).  The \(\GRP\) requires that, for every such
sequence, all limiting densities be represented by an \(X\)-valued Bochner
measurable graphon.  The \(\GRP\) is thus a property of the Banach space itself:
it asks whether the limiting densities can always be represented in the
original space rather than only in the bidual.

Our results give characterizations of this representation property
in several major classes in the unbounded setting, and for every Banach space
in the classical uniformly bounded setting.  We prove that every Banach space
with the \(\GRP\) is weakly sequentially complete and has the
Radon--Nikod\'ym property.  Together with corresponding sufficient conditions,
these necessary conditions give characterizations for spaces with
separable dual, Banach lattices, and dual Banach spaces.  When the \(\GRP\) is
weakened to the classical uniformly bounded setting, the same two conditions
give a characterization for every Banach space.  We now describe
these results in more detail: first the general necessary conditions, then the
three structural classes, and finally the bounded setting.

Our first main result gives two general necessary conditions for the \(\GRP\).
More precisely, every Banach space with the \(\GRP\) is
weakly sequentially complete and has the Radon--Nikod\'ym property
(Proposition~\ref{prop:grp-implies-wsc} and
Corollary~\ref{cor:grp-implies-rnp}).  Conceptually, the limiting behavior of
decorated graph sequences can detect two aspects of Banach-space geometry:
whether weakly Cauchy sequences have weak limits and whether absolutely
continuous vector measures admit Radon--Nikod\'ym derivatives.  We also
establish a related structural property: the \(\GRP\) is separably
determined.  Namely, \(X\) has the
\(\GRP\) if and only if every closed separable subspace of \(X\) has it
(Theorem~\ref{thm:separable-descent}).

In three major structural classes, the necessary conditions above lead to
characterizations of the \(\GRP\).  If \(X^*\) is separable, then
\(X\) has the \(\GRP\) if and only if \(X\) is reflexive
(Theorem~\ref{thm:separable-dual-characterization}).  If \(X\) is a Banach
lattice, then \(X\) has the \(\GRP\) if and only if \(X\) has the
Radon--Nikod\'ym property
(Theorem~\ref{thm:banach-lattice-characterization}).  If \(X\) is a dual
Banach space, then \(X\) has the \(\GRP\) if and only if it is weakly
sequentially complete and has the Radon--Nikod\'ym property
(Theorem~\ref{thm:dual-banach-characterization}).  Thus, for example,
\(\ell_1\) has the \(\GRP\), whereas \(c_0\) and \(L^1([0,1])\) do not.

For the classical uniformly bounded setting, we introduce the bounded graphon
representation property (\(\mathrm{BGRP}\)), which admits a complete
characterization for every Banach space.  We say that \(X\) has the
\(\mathrm{BGRP}\) if every graph sequence that is uniformly bounded in
\(L^\infty\) and whose homomorphism densities converge has a bounded
\(X\)-valued Bochner measurable graphon representing all limiting densities
(Definition~\ref{def:bgrp}).  For every Banach space \(X\), the
\(\mathrm{BGRP}\) holds if and only if \(X\) is weakly sequentially complete
and has the Radon--Nikod\'ym property
(Theorem~\ref{thm:bounded-characterization}).  This characterization
pinpoints why the representation property can fail: if the limiting densities
of such a sequence can be represented by an \(X^{**}\)-valued graphon but not
by any \(X\)-valued (Bochner measurable) graphon, then the obstruction
must come from a failure of weak sequential completeness or of the
Radon--Nikod\'ym property in \(X\) (either failure can occur).

The remainder of the paper is organized as follows.
Section~\ref{sec:graphon-frameworks} introduces the graphon frameworks and
notation used throughout. Section~\ref{sec:necessary-conditions} establishes
the necessary conditions, Section~\ref{sec:positive-characterizations} treats
the three structural classes, and Section~\ref{sec:bounded-representation}
addresses the bounded setting.

\section{Graphon frameworks}
\label{sec:graphon-frameworks}

In this section, we recall the part of the framework of Kunszenti-Kov\'acs,
Lov\'asz, and Szegedy \cite{kls2022decorated} needed here and introduce the framework used in this
paper. We call graphons in the former framework \emph{weak-* graphons} and those in the latter
\emph{Bochner graphons}, and denote them by \(U\) and \(W\), respectively.

\subsection{Notation}

Unless otherwise specified, \([0,1]\) and, more generally, \([0,1]^S\) for a
finite set \(S\) are equipped with their Lebesgue \(\sigma\)-algebras and
Lebesgue measures.
In this paper, all Banach spaces are assumed to be real. For a measure space
\((\Omega,\Sigma,\mu)\), a Banach space \(E\), and \(1\le p\le\infty\),
\(L^p(\Omega,\Sigma,\mu;E)\) denotes the Bochner \(L^p\)-space of
\(E\)-valued functions. When \(\Sigma\) and \(\mu\) are understood, we write
\(L^p(\Omega;E)\). When \(E=\mathbb R\), we omit \(E\) and write
\(L^p(\Omega,\Sigma,\mu)\), or \(L^p(\Omega)\) when \(\Sigma\) and \(\mu\)
are understood. All equalities between measurable functions are understood up
to null sets.

For a Banach space \(E\), we write \(E^*\) for its dual and
\(\langle \phi,x\rangle:=\phi(x)\) for \(\phi\in E^*\) and \(x\in E\).
We identify \(E\) with its canonical image in \(E^{**}\). Under this
identification, for \(x\in E\) and \(\phi\in E^*\), we also write
\(\langle x,\phi\rangle:=\phi(x)\).
For Banach spaces \(E\) and \(F\), we write \(E\cong F\) if they are
isometrically isomorphic.

Unless otherwise specified, the vertex set of a \(k\)-vertex simple graph is
\(\{1,\ldots,k\}\). In a few places, we instead use
\(\{0,\ldots,k-1\}\).

\subsection{Weak-* graphons}

Let \(B\) be a separable Banach space, and let \(F\) be a finite simple graph. A
\emph{\(B\)-decoration} of \(F\) is a map
\(f:E(F)\to B\). We write \(\mathbf F=(F,f)\) for the corresponding
\emph{\(B\)-decorated test graph}. If \(A\subset B\), then
\(\mathbf F\) is called \emph{\(A\)-decorated} if
\(f(e)\in A\) for every \(e\in E(F)\).

A function \(U:[0,1]^2\to B^*\) is called \emph{symmetric} if
\(U(x,y)=U(y,x)\) for almost every \((x,y)\in [0,1]^2\). It is called
\emph{weak-* measurable} if, for every \(b\in B\), the scalar function
\((x,y)\mapsto \langle b,U(x,y)\rangle\) is measurable. A
\emph{weak-* graphon} is a symmetric weak-* measurable function
\(U:[0,1]^2\to B^*\) such that
\[
\|U(\cdot,\cdot)\|_{B^*}\in L^p([0,1]^2)
\qquad
\text{for every }1\le p<\infty.
\]
For such \(U\), we write
\[
\|U\|_p
:=
\left(
\int_{[0,1]^2}\|U(x,y)\|_{B^*}^p\,dx\,dy
\right)^{1/p}.
\]
Since \(B\) is separable, the norm function
\((x,y)\mapsto \|U(x,y)\|_{B^*}\) is measurable. We denote the class
of \(B^*\)-valued weak-* graphons by \(\mathcal W_{\mathrm{w}^*}(B^*)\).
We call \(U\in\mathcal W_{\mathrm{w}^*}(B^*)\) \emph{bounded} if
\((x,y)\mapsto\|U(x,y)\|_{B^*}\) is essentially bounded, and in this case
write \(\|U\|_\infty\) for its essential supremum.

Let \(\mathbf F=(F,f)\) be a \(B\)-decorated test graph and let
\(U\in\mathcal W_{\mathrm{w}^*}(B^*)\). The \emph{homomorphism density} of
\(\mathbf F\) in \(U\) is
\[
t(\mathbf F,U)
=
\int_{[0,1]^{V(F)}}
\prod_{ij\in E(F)}
\langle f(ij),U(x_i,x_j)\rangle
\prod_{i\in V(F)}dx_i.
\]
The integral is well defined. Indeed, if \(m:=|E(F)|\ge 1\), then the
integrand satisfies
\[
\left|
\prod_{ij\in E(F)}
\langle f(ij),U(x_i,x_j)\rangle
\right|
\le
\left(\prod_{ij\in E(F)}\|f(ij)\|_B\right)
\prod_{ij\in E(F)}\|U(x_i,x_j)\|_{B^*}.
\]
Hence H\"older's inequality applies because
\(\|U(\cdot,\cdot)\|_{B^*}\in L^m([0,1]^2)\).

A subset \(A\subset B\) is called \emph{generating} if
\(\overline{\operatorname{span}}A=B\). It is called a
\emph{countable generating set} if it is both countable and
generating. The role of \(A\) is to reduce the class of test
graphs to a countable class. Indeed, if \(A\) is countable,
then there are only countably many \(A\)-decorated test
graphs.

We now state Theorem 3.7 of Kunszenti-Kov\'acs--Lov\'asz--Szegedy
\cite{kls2022decorated} in the notation of this paper.
The theorem gives the following representation statement. If a sequence
of weak-* graphons is uniformly bounded in every \(L^p\)-norm and its
homomorphism densities converge on a countable generating class of test
graphs, then the homomorphism densities converge for every
\(B\)-decorated test graph, and the limiting densities are represented by a
weak-* graphon.

\begin{theorem}
\label{thm:kls-3.7}
Let \(B\) be a separable Banach space, and let
\(A\subset B\) be a countable generating set. Let \((U_n)\) be a sequence of
weak-* graphons in \(\mathcal W_{\mathrm{w}^*}(B^*)\). Assume the following
two conditions.
\begin{enumerate}
\item For every \(1\le p<\infty\), there exists \(c_p<\infty\) such that
\(\|U_n\|_p\le c_p\) for every \(n\).
\item For every \(A\)-decorated test graph \(\mathbf F\), the sequence
\(\bigl(t(\mathbf F,U_n)\bigr)\) is convergent.
\end{enumerate}
Then there exists a weak-* graphon
\(U\in \mathcal W_{\mathrm{w}^*}(B^*)\) such that
\(t(\mathbf F,U_n)\to t(\mathbf F,U)\) for every \(B\)-decorated test graph
\(\mathbf F\). Moreover, \(U\) can be chosen so that
\(\|U\|_p\le c_p\) for every \(1\le p<\infty\).
\end{theorem}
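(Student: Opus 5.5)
This is Theorem 3.7 of \cite{kls2022decorated}, so in the paper it is cited rather than reproved. If I were to prove it directly, I would follow the standard route for graph limits: first extract subsequential limits of the "sampled" objects, then build a limit object, and finally identify its densities.

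\emph{Step 1: from the countable class to all \(B\)-decorated test graphs.}
Densities are multilinear in the edge decorations. For a test graph with \(m\) edges, changing one decoration from \(b\) to \(b'\) changes the density by at most
\[
\|b-b'\|_B\prod\|f(e)\|\cdot c_m^m,
\]
by the H\"older bound displayed above. Hence convergence on \(A\)-decorated graphs extends first to decorations in the rational span of \(A\), and then by density to all of \(B\). This uses the uniformity in \(n\) given by hypothesis (1), via a standard \(\varepsilon/3\) argument.

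\emph{Step 2: constructing a candidate limit.}
For each finite \(k\), sample \(x_1,\dots,x_k\) uniformly and consider the random \(B^*\)-decorated complete graph \((U_n(x_i,x_j))_{i<j}\). Its joint law of weak-* evaluations is determined by the densities:
\begin{itemize}
\item mixed moments of \(\langle a,U_n(x_i,x_j)\rangle\) for \(a\in A\) are exactly homomorphism densities;
\item the uniform \(L^p\) bounds for all \(p\) give tightness in the weak-* topology on balls of \(B^*\), which are compact metrizable because \(B\) is separable (Banach--Alaoglu).
\end{itemize}
Passing to a subsequence, the \(k\)-samples converge in distribution to consistent, exchangeable random arrays \((Y_{ij})\) with values in \(B^*\). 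Their moments equal the limiting densities, and moment determinacy holds because all moments are controlled, e.g. via Carleman-type growth obtained from the \(c_p\).

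\emph{Step 3: representation.}
Apply the Aldous--Hoover representation for jointly exchangeable arrays, which is valid here because \(B^*\) with the weak-* Borel structure is a standard Borel space. This gives
\[
Y_{ij}=g(\xi_i,\xi_j,\xi_{ij}).
\]
Define \(U(x,y)\) as the weak-* (Gelfand) integral of \(g(x,y,\cdot)\) over the edge variable. Because densities are multilinear and the edges in a test graph use independent \(\xi_{ij}\), averaging over \(\xi_{ij}\) gives \(t(\mathbf F,U)=\lim t(\mathbf F,U_n)\). The bound \(\|U\|_p\le c_p\) follows from lower semicontinuity of the norm under weak-* convergence together with Fatou.

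\emph{Main obstacle.}
The hardest part is justifying moment determinacy and tightness for unbounded decorations. Concretely, one must show that the laws are determined by their moments, so that the whole sequence converges and not merely a subsequence. I would first try to handle this by truncation at norm level \(R\), with errors controlled uniformly by the \(c_p\); a regularity-lemma style argument is a fallback.
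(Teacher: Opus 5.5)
The paper does not prove this statement; it imports it from \cite[Theorem~3.7]{kls2022decorated}, so your sketch has to stand on its own. Its skeleton is workable: tight subsequential limits of sampled arrays, then Aldous--Hoover, then averaging out the edge variable. However, Step~2 contains a false claim, and the ``main obstacle'' you build on it is a dead end. Densities of simple test graphs only give the \emph{multilinear} moments \(\mathbb E\prod_{ij\in E(F)}\langle f(ij),Y_{ij}\rangle\), in which each pair \(ij\) occurs at most once. A moment such as \(\mathbb E[\langle a,Y_{12}\rangle\langle a',Y_{12}\rangle]\) is the density of a graph with a double edge. Since \(B\) is not an algebra, it is not a test-graph density, and hypothesis~(2) says nothing about it. Hence the sampled laws are in general not determined by the densities and need not converge along the whole sequence. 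For example, take \(B=\mathbb R\) and \(U_n=\sigma_nH_n\), where \(H_n\) is the step function of a quasirandom \(\pm1\) matrix and \(\sigma_n\) alternates between \(1\) and \(2\). Then \(\|U_n\|_\infty\le 2\), and \(t(\mathbf F,U_n)\to0\) for every \(F\) with an edge, because the cut norm tends to \(0\) and the counting lemma applies to bounded kernels. Yet \(\mathbb E[U_n(x_1,x_2)^2]=\sigma_n^2\) oscillates. The two subsequential limits are the i.i.d.\ \(\pm1\) and i.i.d.\ \(\pm2\) arrays, both averaging to \(U=0\). So neither a Carleman argument nor truncation can give the determinacy you want. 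In any case, hypothesis~(1) provides no growth rate for \(c_p\).

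Fortunately, determinacy is not needed. Write \(Y^n_{ij}:=U_n(x_i,x_j)\). Since \(t(\mathbf F,U_n)\) converges along the full sequence (by hypothesis~(2) and your Step~1), \emph{any} subsequential limit array \(Y\) works. If \(F\) has \(m\) edges, H\"older's inequality bounds the second moment of \(\prod_e\langle f(e),Y^n_e\rangle\) by \(\prod_e\|f(e)\|^2\,c_{2m}^{2m}\). This uniform integrability gives \(\mathbb E\prod_e\langle f(e),Y_e\rangle=\lim_n t(\mathbf F,U_n)\). Averaging over \(\xi_{ij}\) in Step~3 then discards exactly the edge-distribution information that the densities cannot see. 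So Step~2 should claim convergence of multilinear moments only, with no uniqueness.

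Two justifications are missing in Step~3. First, writing \(Y_{ij}=g(\xi_i,\xi_j,\xi_{ij})\) without the global Aldous--Hoover variable requires the limit array to be dissociated; otherwise you only get a mixture of graphons. Dissociation does hold, because the array sampled from a fixed \(U_n\) is dissociated and independence of disjoint vertex blocks survives weak limits. Second, the weak-* integral \(\int_0^1 g(x,y,z)\,dz\) needs \(\int_0^1\|g(x,y,z)\|\,dz<\infty\) for a.e.\ \((x,y)\). This follows from \(\mathbb E\|Y_{12}\|\le\liminf_n\|U_n\|_1\), by the portmanteau inequality applied to the weak-* lower semicontinuous norm of \(B^*\). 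The same inequality with exponent \(p\), combined with \(\|U(x,y)\|^p\le\int_0^1\|g(x,y,z)\|^p\,dz\), gives \(\|U\|_p\le c_p\).
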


\subsection{Bochner graphons}
\label{subsec:dual-finite-graph-framework}

An \emph{\(X\)-decorated target graph} is a pair
\(\mathbf G=(G,g)\), where \(G\) is a finite graph that is complete and
completely looped, and
\(g:E(G)\to X\) is a decoration map.
An \emph{\(X^*\)-decorated test graph} is a pair
\(\mathbf F=(F,f)\), where \(F\) is a finite simple graph (without loops) and
\(f:E(F)\to X^*\). Thus target decorations lie in \(X\), while test
decorations lie in \(X^*\). The target graph is taken to be complete and
completely looped so that every vertex map from a test graph to \(G\) is
allowed.

Let \(\mathbf F=(F,f)\) be an \(X^*\)-decorated test graph and let
\(\mathbf G=(G,g)\) be an \(X\)-decorated target graph. For a vertex
map \(\varphi:V(F)\to V(G)\), and for an edge \(ij\in E(F)\), we write
\(\varphi(ij)\) for the edge of \(G\) joining \(\varphi(i)\) and
\(\varphi(j)\). This edge exists because \(G\) is complete and completely
looped. The \emph{homomorphism number} of \(\mathbf F\) in \(\mathbf G\) is
\[
\operatorname{hom}(\mathbf F,\mathbf G)
=
\sum_{\varphi:V(F)\to V(G)}
\prod_{ij\in E(F)}
\left\langle
f(ij),
g(\varphi(ij))
\right\rangle .
\]
The \emph{homomorphism density} of \(\mathbf F\) in \(\mathbf G\) is
\[
t(\mathbf F,\mathbf G)
=
\frac{\operatorname{hom}(\mathbf F,\mathbf G)}
{|V(G)|^{|V(F)|}}.
\]
Equivalently, if \(\varphi:V(F)\to V(G)\) is chosen uniformly at random,
then
\[
t(\mathbf F,\mathbf G)
=
\mathbb E_{\varphi}\left[
\prod_{ij\in E(F)}
\left\langle
f(ij),
g(\varphi(ij))
\right\rangle
\right].
\]

For \(1\le p<\infty\), define the \(L^p\)-norm of \(\mathbf G\) by
\[
\|\mathbf G\|_p
=
\left(
\frac{1}{|V(G)|^2}
\sum_{u,v\in V(G)}
\|g(uv)\|_X^p
\right)^{1/p}.
\]
For \(p=\infty\), define
\(\|\mathbf G\|_\infty=\max_{u,v\in V(G)}\|g(uv)\|_X\).

We next introduce the Bochner graphons which potentially represent density
limits. An \emph{\(X\)-valued Bochner graphon} is a symmetric Bochner
measurable function \(W:[0,1]^2\to X\) such that
\(\|W(\cdot,\cdot)\|_X\in L^p([0,1]^2)\) for every
\(1\le p<\infty\). For such \(W\), we write
\[
\|W\|_p:=\left(\int_{[0,1]^2}\|W(x,y)\|_X^p\,dx\,dy\right)^{1/p}.
\]
We denote the class of \(X\)-valued Bochner graphons by
\(\mathcal W_{\mathrm{Boch}}(X)\).
We call \(W\in\mathcal W_{\mathrm{Boch}}(X)\) \emph{bounded} if
\((x,y)\mapsto\|W(x,y)\|_X\) is essentially bounded, and in this case
write \(\|W\|_\infty\) for its essential supremum.

Let \(\mathbf F=(F,f)\) be an \(X^*\)-decorated test graph and let
\(W\in\mathcal W_{\mathrm{Boch}}(X)\). The \emph{homomorphism density} of
\(\mathbf F\) in \(W\) is
\[
t(\mathbf F,W)
=
\int_{[0,1]^{V(F)}}
\prod_{ij\in E(F)}
\left\langle
f(ij),
W(x_i,x_j)
\right\rangle
\prod_{i\in V(F)}dx_i.
\]
As in the weak-* graphon case, this integral is well defined.
Every \(X\)-decorated target graph has an associated equal-step
graphon. Let \(\mathbf G=(G,g)\), and partition \([0,1]\) into intervals
\((I_u)_{u\in V(G)}\) of equal measure \(1/|V(G)|\). Define
\(W_{\mathbf G}(x,y)=g(uv)\) if \(x\in I_u\) and \(y\in I_v\). Then
\(W_{\mathbf G}\) is an \(X\)-valued Bochner graphon, and
\(\|W_{\mathbf G}\|_p=\|\mathbf G\|_p\) for every
\(1\le p\le\infty\).
Moreover, \(t(\mathbf F,W_{\mathbf G})=t(\mathbf F,\mathbf G)\) for every
\(X^*\)-decorated test graph \(\mathbf F\).

Motivated by Theorem~\ref{thm:kls-3.7}, we now define the graphon
representation property of \(X\). It asks whether the density limits of uniformly
\(L^p\)-bounded target graph sequences whose \(X^*\)-decorated homomorphism
densities converge are always represented by an \(X\)-valued Bochner
graphon.

\begin{definition}
\label{def:grp}
Let \(X\) be a Banach space. We say that \(X\) has the \emph{graphon
representation property} if the following holds: if a sequence
\((\mathbf G_n)\) of \(X\)-decorated
target graphs satisfies
\begin{enumerate}
\item \(\sup_n\|\mathbf G_n\|_p<\infty\) for every \(1\le p<\infty\);
\item for every \(X^*\)-decorated test graph \(\mathbf F\), the
sequence \((t(\mathbf F,\mathbf G_n))\) converges,
\end{enumerate}
then there exists an \(X\)-valued Bochner graphon
\(W\in\mathcal W_{\mathrm{Boch}}(X)\) such that
\[
t(\mathbf F,\mathbf G_n)\to t(\mathbf F,W)
\]
for every
\(X^*\)-decorated test graph \(\mathbf F\).
\end{definition}

We do not require \(|V(G_n)|\to\infty\). This causes no loss of generality:
one may replace every vertex of \(G_n\) by the same number of copies and
decorate the edge between a copy of \(u\) and a copy of \(v\) by the original
decoration \(g_n(uv)\). This replacement preserves all
homomorphism densities and all the norms defined above. Taking \(n\) copies of
every vertex therefore produces an equivalent sequence whose numbers of
vertices tend to infinity.

In Section~\ref{sec:bounded-representation}, we also introduce a bounded variant
by replacing condition~(1) in Definition~\ref{def:grp} with
\(\sup_n\|\mathbf G_n\|_\infty<\infty\) and requiring the representing
graphon to be bounded.

\begin{remark}
The requirement that the representing graphon be Bochner measurable is
natural from the viewpoint of finite graphs. Indeed, the equal-step graphon
associated with each \(X\)-decorated target graph takes only finitely many
values and is therefore Bochner measurable. This motivates seeking a graphon
representation of the limiting densities within the class of Bochner
measurable symmetric functions, rather than in the larger class of weakly
measurable symmetric functions.
\end{remark}

\section{Necessary conditions for graphon representability}
\label{sec:necessary-conditions}

This section derives weak sequential completeness and the Radon--Nikodým
property as necessary conditions for the graphon representation property.
Weak sequential completeness is detected by a sequence of one-vertex looped
graphs associated with a weakly Cauchy sequence. The Radon--Nikodým property
requires a more substantial argument: starting from a bounded operator
\(T:L^1([0,1])\to X\), we construct a convergent graph sequence and use
star test graphs to construct a representing density from a limiting
graphon. We first carry this out for separable Banach spaces. An argument using
a four-cycle test graph then yields the separable determination needed for the
general case.

\subsection{Weak sequential completeness}

Recall that a sequence \((x_n)\) in a Banach space \(X\) is \emph{weakly Cauchy} if,
for every \(\varphi\in X^*\), the real sequence \((\varphi(x_n))\) is
Cauchy. The space \(X\) is \emph{weakly sequentially complete} if every
weakly Cauchy sequence in \(X\) has a weak limit in \(X\)
\cite[p.~232]{aliprantisBurkinshaw2006positive}.

\begin{proposition}
\label{prop:grp-implies-wsc}
If \(X\) has the graphon representation property, then \(X\) is weakly
sequentially complete.
\end{proposition}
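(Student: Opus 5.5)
Let \((x_n)\) be a weakly Cauchy sequence in \(X\). We must produce a weak limit in \(X\). Following the hint at the start of this section, let \(\mathbf G_n\) be the one-vertex looped target graph whose single loop is decorated by \(x_n\). A weakly Cauchy sequence is weakly bounded, so by the uniform boundedness principle \(M:=\sup_n\|x_n\|<\infty\). Hence \(\|\mathbf G_n\|_p=\|x_n\|\le M\) for every \(p\), which gives condition (1) of Definition~\ref{def:grp}.

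For condition (2), take an \(X^*\)-decorated test graph \(\mathbf F=(F,f)\). Since \(G_n\) has one vertex, there is only one vertex map, and
\[
t(\mathbf F,\mathbf G_n)=\prod_{e\in E(F)}\langle f(e),x_n\rangle .
\]
Each factor converges because \((x_n)\) is weakly Cauchy, so the finite product converges. The graphon representation property then supplies \(W\in\mathcal W_{\mathrm{Boch}}(X)\) with
\[
t(\mathbf F,W)=\lim_n\prod_{e\in E(F)}\langle f(e),x_n\rangle
\]
for every \(\mathbf F\). Write \(\ell(\varphi):=\lim_n\langle\varphi,x_n\rangle\). This is a bounded linear functional on \(X^*\), i.e.\ an element of \(X^{**}\). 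It suffices to show \(\ell\) is given by some \(x\in X\).

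To find \(x\), I would use two test graphs, each with a single edge decorated by \(\varphi\).
\begin{itemize}
\item The one-edge graph \(K_2\) gives \(\int\langle\varphi,W\rangle\,dx\,dy=\ell(\varphi)\).
\item The path with two edges (a cherry) gives
\[
\int_0^1\Bigl(\int_0^1\langle\varphi,W(x,y)\rangle\,dy\Bigr)^2dx=\ell(\varphi)^2.
\]
\end{itemize}
Since \(W\) is Bochner integrable on \([0,1]^2\), set \(x:=\int_{[0,1]^2}W\in X\). Bochner integrals commute with bounded functionals, so \(\langle\varphi,x\rangle=\int\langle\varphi,W\rangle=\ell(\varphi)\) by the \(K_2\) identity. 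Thus \(\langle\varphi,x_n\rangle\to\langle\varphi,x\rangle\) for every \(\varphi\in X^*\), that is, \(x_n\to x\) weakly. The cherry identity is not actually needed.

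The only step with any content is recognising that Bochner measurability together with integrability of \(\|W\|\) (which holds since \(\|W\|\in L^1\)) makes \(\int W\) an element of \(X\) itself rather than of \(X^{**}\). That fact is standard, so I expect no real obstacle. One check remains: the definition requires condition (2) for all test graphs, including edgeless ones. These have density \(1\) for every target graph, so they are trivially convergent.
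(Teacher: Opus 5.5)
Your proof is correct and is essentially the paper's argument: one-vertex looped graphs decorated by \(x_n\), a uniform bound from the uniform boundedness principle, and convergence of the product densities. The weak limit is then identified as \(\int_{[0,1]^2}W\) via the single-edge test graph, and, as you note, the cherry identity is not needed.
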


\begin{proof}
Let \((x_n)\) be a weakly Cauchy sequence in \(X\). We show that \((x_n)\)
has a weak limit in \(X\). For each \(n\), let \(\mathbf G_n=(G_n,g_n)\) be
the target graph with one vertex and one loop, where the loop is decorated by
\(x_n\). Then, by definition, \(\|\mathbf G_n\|_\infty=\|x_n\|\). By the
uniform boundedness principle, \((x_n)\) is norm bounded. Hence
\((\mathbf G_n)\) is uniformly \(L^\infty\)-bounded and therefore uniformly
\(L^p\)-bounded for every \(1\le p<\infty\).

Let \(\mathbf F=(F,f)\) be an \(X^*\)-decorated test graph. Since \(G_n\) has
only one vertex,
\(t(\mathbf F,\mathbf G_n)=\operatorname{hom}(\mathbf F,\mathbf G_n)
=\prod_{ij\in E(F)}\langle f(ij),x_n\rangle\).
Since \((x_n)\) is weakly Cauchy, each real sequence
\((\langle f(ij),x_n\rangle)\) converges. Thus \(t(\mathbf F,\mathbf G_n)\)
converges for every \(X^*\)-decorated test graph \(\mathbf F\). By the graphon
representation property, there is an \(X\)-valued Bochner graphon \(W\) such
that \(t(\mathbf F,\mathbf G_n)\to t(\mathbf F,W)\) for every
\(X^*\)-decorated test graph \(\mathbf F\).

Fix \(\varphi\in X^*\), and let \(\mathbf F_\varphi\) be the test graph with
two vertices and one edge decorated by \(\varphi\). Then
\(t(\mathbf F_\varphi,\mathbf G_n)=\langle\varphi,x_n\rangle\).
On the other hand,
\(t(\mathbf F_\varphi,W)
=\int_{[0,1]^2}\langle\varphi,W(x_1,x_2)\rangle\,dx_1dx_2
=\langle\varphi,\int_{[0,1]^2}W(x_1,x_2)\,dx_1dx_2\rangle\).
Therefore
\(\langle\varphi,x_n\rangle
\to\langle\varphi,\int_{[0,1]^2}W(x_1,x_2)\,dx_1dx_2\rangle\)
for every \(\varphi\in X^*\). Hence \((x_n)\) converges weakly in \(X\) to
\(\int_{[0,1]^2}W(x_1,x_2)\,dx_1dx_2\). Thus \(X\) is weakly sequentially
complete.
\end{proof}

\subsection{The Radon--Nikod\'ym property: the separable case}
\label{subsec:rnp-separable-case}

We next recall the Radon--Nikod\'ym property. Let \(X\) be a Banach space and
let \((\Omega,\Sigma,\mu)\) be a finite measure space. We say that \(X\) has
the \emph{Radon--Nikod\'ym property with respect to
\((\Omega,\Sigma,\mu)\)} if every countably additive \(X\)-valued measure
\(\nu:\Sigma\to X\) of bounded variation such that \(\nu\ll\mu\) has a
Radon--Nikod\'ym derivative. That is, there exists \(g\in L^1(\Omega;X)\) such
that \(\nu(A)=\int_A g\,d\mu\) for every \(A\in\Sigma\).
We say that \(X\) has the \emph{Radon--Nikod\'ym property} if it has the
Radon--Nikod\'ym property with respect to every finite measure space.

We shall use the following standard characterization of the Radon--Nikod\'ym
property by representable operators
\cite[Theorem~III.1.5]{diestelUhl1977vector}.

\begin{theorem}
\label{thm:operator-characterization-rnp}
Let \((\Omega,\Sigma,\mu)\) be a finite measure space and let \(X\) be a
Banach space. Then \(X\) has the Radon--Nikod\'ym property with respect to
\((\Omega,\Sigma,\mu)\) if and only if every bounded operator
\(T:L^1(\Omega)\to X\) is representable, i.e. there exists
\(g\in L^\infty(\Omega;X)\) such that
\(T(f)=\int_\Omega f g\,d\mu\) for every \(f\in L^1(\Omega)\).
\end{theorem}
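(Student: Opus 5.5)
The plan is to pass between vector measures and operators on \(L^1(\Omega)\) through the correspondence \(\nu(A)=T(\chi_A)\), in both directions.

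\emph{Representability implies the RNP.} Let \(\nu:\Sigma\to X\) be countably additive, of bounded variation, with \(\nu\ll\mu\).
1. Since \(|\nu|\) is a finite positive measure with \(|\nu|\ll\mu\), the scalar Radon--Nikod\'ym theorem gives \(h=d|\nu|/d\mu\in L^1(\mu)\), finite a.e.
2. Put \(E_n=\{h\le n\}\). Then \(E_n\uparrow\) a set of full measure, and for \(A\subset E_n\) we have \(\|\nu(A)\|\le|\nu|(A)\le n\mu(A)\).
3. Define \(T_n\) on simple functions by \(T_n(\sum a_k\chi_{A_k})=\sum a_k\nu(A_k\cap E_n)\). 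This satisfies \(\|T_n s\|\le n\|s\|_{L^1(\mu)}\), so it extends to a bounded operator \(L^1(\Omega)\to X\).
4. By hypothesis \(T_n(f)=\int f g_n\,d\mu\) with \(g_n\in L^\infty(\Omega;X)\). In particular \(\nu(A\cap E_n)=\int_A g_n\,d\mu\).
5. Two Bochner integrable functions with the same indefinite integral agree a.e. To see this, apply a countable norming family of functionals on the separable closed span of their ranges, which is available by Pettis measurability. Hence \(g_{n+1}=g_n\) a.e. on \(E_n\) and \(g_n=0\) a.e. off \(E_n\), so the \(g_n\) glue to a strongly measurable \(g\).
6. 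The variation of the indefinite Bochner integral \(A\mapsto\int_A g_n\) is \(\int_A\|g_n\|\,d\mu\). This gives \(\int_{E_n}\|g\|\,d\mu=|\nu|(E_n)\le|\nu|(\Omega)\), and monotone convergence yields \(g\in L^1(\Omega;X)\).
7. Finally, \(\nu(A)=\lim_n\nu(A\cap E_n)=\lim_n\int_{A\cap E_n}g\,d\mu=\int_A g\,d\mu\), using countable additivity of \(\nu\) and dominated convergence for Bochner integrals.

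\emph{The RNP implies representability.} Given \(T\), set \(\nu(A)=T(\chi_A)\).
1. If \(A_k\) are disjoint, then \(\sum_k\chi_{A_k}\to\chi_{\bigcup A_k}\) in \(L^1\), so \(\nu\) is countably additive.
2. \(\|\nu(A)\|\le\|T\|\mu(A)\), so \(\nu\ll\mu\) and \(|\nu|\le\|T\|\mu\) has bounded variation.
3. The RNP gives \(g\in L^1(\Omega;X)\) with \(T(\chi_A)=\int_A g\,d\mu\). By linearity \(T(s)=\int sg\,d\mu\) for simple \(s\).
4. Next we show \(\|g\|\le\|T\|\) a.e. Take the separable closed subspace containing the essential range of \(g\) and a sequence \((\varphi_k)\) in the unit ball of \(X^*\) norming it. Then \(|\int_A\varphi_k\circ g\,d\mu|\le\|T\|\mu(A)\) for every \(A\), so \(|\varphi_k\circ g|\le\|T\|\) a.e. Taking the supremum over \(k\) gives \(\|g\|\le\|T\|\) a.e., hence \(g\in L^\infty(\Omega;X)\).
5. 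Now both \(f\mapsto T(f)\) and \(f\mapsto\int fg\,d\mu\) are bounded on \(L^1(\Omega)\) and agree on the dense subspace of simple functions, so they coincide.

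The main obstacle is the direction from representability to the RNP. The hypothesis only gives representations of operators bounded on \(L^1(\mu)\), and \(\nu\) need not have a bounded density with respect to \(\mu\). The truncation to the sets \(E_n\) where \(d|\nu|/d\mu\le n\) handles this. The gluing in step 5 and the integrability in step 6 rest on two standard facts: Bochner densities are unique, and the variation of an indefinite Bochner integral is the integral of the norm.
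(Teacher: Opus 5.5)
Your proof is correct in both directions. In step 7 you also tacitly use that \(\nu\) vanishes on the \(\mu\)-null set \(\Omega\setminus\bigcup_n E_n\), which follows at once from \(\nu\ll\mu\).

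The paper gives no proof of its own but imports the result from Diestel--Uhl (Theorem~III.1.5), and your argument is essentially the standard one behind that citation. In the forward direction, \(\nu(A)=T(\chi_A)\) with \(\|\nu(A)\|\le\|T\|\mu(A)\) forces \(\|g\|\le\|T\|\) a.e. In the converse direction, you truncate to \(\{d|\nu|/d\mu\le n\}\) and glue the resulting \(L^\infty\) densities using uniqueness of Bochner densities.
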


We shall also use the following testing theorem for the Lebesgue interval
\cite[Corollary~V.3.8]{diestelUhl1977vector}; see also
\cite[Corollary~2.14]{pisier2016martingales}.

\begin{theorem}
\label{thm:interval-testing-rnp}
Let \(X\) be a Banach space. Then \(X\) has the Radon--Nikod\'ym property if
and only if \(X\) has the Radon--Nikod\'ym property with respect to the
Lebesgue unit interval \([0,1]\).
\end{theorem}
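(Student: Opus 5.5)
One implication is trivial, since the Lebesgue unit interval is a particular finite measure space. For the converse I would argue by contraposition: assuming \(X\) fails the Radon--Nikod\'ym property with respect to some finite measure space \((\Omega,\Sigma,\mu)\), I would build a bounded operator \(T:L^1([0,1])\to X\) that is not representable. By Theorem~\ref{thm:operator-characterization-rnp}, applied to the Lebesgue interval, this shows that \(X\) fails the Radon--Nikod\'ym property with respect to \([0,1]\).

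The construction will go through the geometric characterization of the Radon--Nikod\'ym property. First I would normalize to \(\mu(\Omega)=1\). Then I would take a vector measure \(\nu\ll\mu\) of bounded variation with no Bochner derivative. The goal is a bounded set \(D\subset X\) that is not dentable, i.e.\ there is \(\delta>0\) such that every \(x\in D\) lies in \(\overline{\operatorname{conv}}(D\setminus B(x,\delta))\). The natural candidate is the set of averages \(\nu(A)/\mu(A)\) over sets \(A\) with \(\mu(A)>0\) on which the variation density is bounded. If every such average range were dentable, then an exhaustion argument would build simple functions \(g\) with \(\|\nu(A)-\int_A g\,d\mu\|\) uniformly small. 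A limiting argument would then produce a derivative, contradicting the choice of \(\nu\). This is the Rieffel/Maynard direction ``dentable average ranges imply a derivative''. I expect it to be the main obstacle, because the exhaustion must be organized carefully using the Bochner integral and absolute continuity of \(|\nu|\).

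From a bounded non-dentable set \(D\), I would extract a bounded \(\delta\)-tree. Start at some \(x_\emptyset\in D\). Use non-dentability to write \(x_\emptyset\) approximately as a finite convex combination of points of \(D\) at distance at least \(\delta\) from it. Then split each of these points in turn, and refine the convex weights to dyadic ones at the cost of small errors, possibly passing to \(\delta/2\). This yields a family \((x_s)\), indexed by finite \(0/1\) words, with \(x_s=\tfrac12(x_{s0}+x_{s1})\), \(\|x_{s0}-x_s\|\ge\delta/2\) and \(\sup_s\|x_s\|<\infty\). Here the words are used along a scheme in which each step either splits or keeps a node, so that every branch splits infinitely often.

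Finally, let \(I_s\) be the dyadic interval coded by \(s\), and define \(T(\mathbf 1_{I_s})=|I_s|\,x_s\). The tree relation makes \(T\) finitely additive on dyadic step functions. Moreover \(\|Tf\|\le\sup_s\|x_s\|\,\|f\|_1\) on these step functions, so \(T\) extends to a bounded operator on \(L^1([0,1])\). Suppose \(T\) were represented by some \(g\in L^\infty([0,1];X)\). Then the conditional expectations of \(g\) on the dyadic \(\sigma\)-algebras would be the functions \(\sum_{|s|=n}x_s\mathbf 1_{I_s}\). By the Bochner martingale convergence theorem these would converge almost everywhere, contradicting the \(\delta/2\)-separation along every branch. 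Hence \(T\) is not representable, which completes the contrapositive.
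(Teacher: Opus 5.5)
The paper does not prove this statement; it cites Diestel--Uhl (Corollary~V.3.8) and Pisier (Corollary~2.14). Your outline follows the standard route behind those references. The first step (Rieffel's theorem giving a bounded non-dentable average range) is sound, and so is the last step (\(T(\mathbf 1_{I_s})=|I_s|x_s\), then Bochner martingale convergence for a representing \(g\)).

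The gap is the middle step. A bounded non-dentable set does not in general yield a bounded \(\delta\)-tree, and the ``split or keep'' proviso does not rescue it. Suppose \(x_s=\tfrac12(x_{s0}+x_{s1})\), each node is either kept (\(x_{s0}=x_{s1}=x_s\)) or split with \(\|x_{s0}-x_s\|\ge\delta/2\), and every branch splits infinitely often. From each child you can follow kept nodes down to the first split node below it, which carries the same value. This produces a genuine bounded \(\delta/2\)-tree. But Bourgain and Rosenthal constructed a closed subspace of \(L^1\) that fails the Radon--Nikod\'ym property and contains no bounded \(\delta\)-tree for any \(\delta>0\), so your extraction cannot work in general. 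It breaks at the dyadic rounding. Realizing a bush step \(x=\sum_k\mu_k y_k\) with \(\mu_k\in 2^{-N}\mathbb Z\) over \(N\) binary levels creates intermediate nodes that are partial averages of the \(y_k\). These can be arbitrarily close to \(x\) without equalling it, so they are neither kept nor \(\delta/2\)-split.

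What you actually need is weaker and does follow, provided you keep the bush. Non-dentability gives approximate convex combinations. Correct each one exactly by shifting all children of a node by the defect vector, with summable errors. This yields exact finite convex combinations \(x_s=\sum_k\lambda_{s,k}x_{s,k}\) with \(\|x_{s,k}-x_s\|\ge\delta/2\) and \(\sup_s\|x_s\|<\infty\). Realize this bush as a martingale over successive partitions of \([0,1]\) into intervals of the corresponding relative lengths. After each bush step, insert a halving step in which both halves keep the parent's value, so that the mesh tends to zero. Alternatively, keep dyadic intervals and require separation only between the first and last level of each bush block. Either way, at every point the martingale moves by at least \(\delta/2\) across infinitely many blocks, so it converges nowhere. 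The rest of your argument then goes through unchanged: the definition of \(T\) on step functions of this filtration, the bound \(\|Tf\|\le\sup_s\|x_s\|\,\|f\|_1\), the extension to \(L^1([0,1])\), and the martingale-convergence contradiction. This bush-based version is the argument in the cited sources.
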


It therefore remains to show that every bounded operator
\(T:L^1([0,1])\to X\) admits a representing density. The next lemma isolates
the graph construction associated with such an operator.

\begin{lemma}
\label{lem:dyadic-finite-graphs-operator}
Let \(X\) be a Banach space, and let \(T:L^1([0,1])\to X\) be a bounded
operator. For each \(n\ge1\) and \(i=0,1,\dots,2^n-1\), put
\(I_{n,i}=[i/2^n,(i+1)/2^n)\).
Then there exists a sequence \((\mathbf G_n)_{n\ge1}\) of \(X\)-decorated
target graphs \(\mathbf G_n=(G_n,a_n)\), where \(G_n\) is the complete
looped graph with vertex set \(\{0,1,\dots,2^n-1\}\), such that
\begin{enumerate}
\item \((\mathbf G_n)\) is uniformly \(L^\infty\)-bounded.
\item \(t(\mathbf F,\mathbf G_n)\) converges for every
\(X^*\)-decorated test graph \(\mathbf F\).
\item For every \(i=0,1,\dots,2^n-1\), we have
\(2^{-n}\sum_{j=0}^{2^n-1}a_n(ij)=2^nT(1_{I_{n,i}})\).
\end{enumerate}
\end{lemma}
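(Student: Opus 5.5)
The plan is to build \(\mathbf G_n\) from the dyadic averages of \(T\), arranged as a symmetric ``rank-one-plus-rank-one'' decoration. Then every homomorphism density factors into one-variable integrals, and those converge by the scalar martingale convergence theorem.

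\textbf{Construction.} For \(i=0,\dots,2^n-1\) put \(h_n(i):=2^nT(1_{I_{n,i}})\in X\), and note that \(\|h_n(i)\|\le 2^n\|T\|\,\|1_{I_{n,i}}\|_1=\|T\|\). Define the symmetric decoration
\[
a_n(ij):=h_n(i)+h_n(j)-T(1_{[0,1)}),\qquad i,j\in\{0,\dots,2^n-1\},
\]
including loops \(i=j\). Then \(\|a_n(ij)\|\le 3\|T\|\), which gives (1).

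For (3), linearity gives \(2^{-n}\sum_j h_n(j)=\sum_j T(1_{I_{n,j}})=T(1_{[0,1)})\). Hence
\[
2^{-n}\sum_j a_n(ij)=h_n(i)+T(1)-T(1)=h_n(i)=2^nT(1_{I_{n,i}}).
\]

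\textbf{Convergence of densities.} Write \(\bar h_n(x):=h_n(i)\) for \(x\in I_{n,i}\) and \(c:=T(1)\). The equal-step graphon of \(\mathbf G_n\) is \(W_n(x,y)=\bar h_n(x)+\bar h_n(y)-c\), so
\[
t(\mathbf F,\mathbf G_n)=\int\prod_{ij\in E(F)}\bigl(\langle f(ij),\bar h_n(x_i)\rangle+\langle f(ij),\bar h_n(x_j)\rangle-\langle f(ij),c\rangle\bigr)\prod_i dx_i .
\]
Expanding the product, each term is a constant times \(\prod_{v\in V(F)}\int_0^1\prod_{\psi\in S_v}\langle\psi,\bar h_n(x)\rangle\,dx\), where the \(S_v\) are finite multisets of functionals \(f(ij)\). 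It therefore suffices to show that \(\int_0^1\prod_{k=1}^m\langle\psi_k,\bar h_n(x)\rangle\,dx\) converges for any \(\psi_1,\dots,\psi_m\in X^*\).

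To see this, identify \(\psi_k\circ T\in L^1([0,1])^*\) with some \(g_k\in L^\infty([0,1])\). Then \(\langle\psi_k,\bar h_n(x)\rangle=2^n\int_{I_{n,i}}g_k=\mathbb E[g_k\mid\mathcal D_n](x)\) for \(x\in I_{n,i}\), where \(\mathcal D_n\) is the dyadic \(\sigma\)-algebra of level \(n\). By the martingale convergence theorem (or the Lebesgue differentiation theorem), \(\mathbb E[g_k\mid\mathcal D_n]\to g_k\) almost everywhere. These functions are uniformly bounded by \(\|\psi_k\|\,\|T\|\), so dominated convergence yields
\[
\int_0^1\prod_k\mathbb E[g_k\mid\mathcal D_n]\,dx\;\longrightarrow\;\int_0^1\prod_k g_k\,dx .
\]
This proves (2).

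The only genuinely nontrivial step is this convergence argument. The key observation is that the scalar functions \(\langle\psi,\bar h_n\rangle\) are dyadic conditional expectations of a fixed \(L^\infty\) function, even though \(\bar h_n\) itself need not converge in \(X\). The symmetric additive form of \(a_n\) is chosen precisely so that densities reduce to such one-variable integrals.
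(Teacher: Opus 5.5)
Your proposal is correct and follows the paper's proof: the same decoration $a_n(ij)=2^nT(1_{I_{n,i}})+2^nT(1_{I_{n,j}})-T1$ with the same bound $3\|T\|$ and averaging computation, and the same identification of the scalar edge weights with dyadic conditional expectations of the $L^\infty$ density of $\varphi\circ T$, followed by martingale and dominated convergence. The only difference is cosmetic: you expand the edge product into one-variable integrals before passing to the limit, whereas the paper applies dominated convergence directly to the edge product on $[0,1]^{V(F)}$.
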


\begin{proof}
For each \(n\), let \(G_n\) be the complete, completely looped graph with vertex set
\(\{0,1,\dots,2^n-1\}\), and define the edge decoration \(a_n:E(G_n)\to X\)
by
\(a_n(ij)=2^nT(1_{I_{n,i}})+2^nT(1_{I_{n,j}})-T1\) for
\(0\le i,j<2^n\).
Set \(\mathbf G_n=(G_n,a_n)\).

We first check the uniform boundedness in (1). Since
\[
\|2^nT(1_{I_{n,i}})\|\le 2^n\|T\|\,|I_{n,i}|=\|T\|,
\qquad \|T1\|\le\|T\|,
\]
we have \(\|a_n(ij)\|\le 3\|T\|\) for all \(n,i,j\).
Hence \((\mathbf G_n)\) is uniformly \(L^\infty\)-bounded.

Next we verify (3) by computing the degree average. For every \(i\),
\[
2^{-n}\sum_{j=0}^{2^n-1}a_n(ij)
=2^nT(1_{I_{n,i}})+2^{-n}\sum_{j=0}^{2^n-1}2^nT(1_{I_{n,j}})-T1.
\]
Since \(2^{-n}\sum_{j=0}^{2^n-1}2^nT(1_{I_{n,j}})
=\sum_{j=0}^{2^n-1}T(1_{I_{n,j}})=T1\), we get
\(2^{-n}\sum_{j=0}^{2^n-1}a_n(ij)=2^nT(1_{I_{n,i}})\).

It remains to prove the convergence in (2). Let \(W_n\) be
the equal-step graphon associated with \(\mathbf G_n\). Fix
\(\varphi\in X^*\). Since \(\varphi\circ T\) is a bounded linear functional on
\(L^1([0,1])\), by the Riesz representation theorem, there is
\(h_\varphi\in L^\infty([0,1])\) such that
\(\langle\varphi,Tu\rangle=\int_0^1u(x)h_\varphi(x)\,dx\) for every
\(u\in L^1([0,1])\).

Let \(x\in I_{n,i}\) and \(y\in I_{n,j}\). By the definitions of \(W_n\) and
\(a_n\),
\(\langle\varphi,W_n(x,y)\rangle=\langle\varphi,a_n(ij)\rangle
=2^n\langle\varphi,T1_{I_{n,i}}\rangle
+2^n\langle\varphi,T1_{I_{n,j}}\rangle-\langle\varphi,T1\rangle\).
Using the representation by \(h_\varphi\), we obtain
\(\langle\varphi,W_n(x,y)\rangle
=\frac{1}{|I_{n,i}|}\int_{I_{n,i}}h_\varphi(s)\,ds
+\frac{1}{|I_{n,j}|}\int_{I_{n,j}}h_\varphi(s)\,ds
-\int_0^1h_\varphi(s)\,ds\).
In other words, if \(\mathcal D_n\) denotes the sub-\(\sigma\)-algebra
generated by the intervals \((I_{n,k})_k\), then
\(\langle\varphi,W_n(x,y)\rangle
=\mathbb E(h_\varphi\mid\mathcal D_n)(x)
+\mathbb E(h_\varphi\mid\mathcal D_n)(y)-\int_0^1h_\varphi(s)\,ds\).

The sequence \((\mathcal D_n)_n\) generates the Borel \(\sigma\)-algebra on
\([0,1]\). Hence, by the martingale convergence theorem,
\(\mathbb E(h_\varphi\mid \mathcal D_n)(x)\to h_\varphi(x)\) for a.e. \(x\).
Therefore \(\langle \varphi,W_n\rangle\) converges a.e. on \([0,1]^2\). Since
\((\mathbf G_n)\) is uniformly \(L^\infty\)-bounded,
\(\langle \varphi,W_n\rangle\) is also uniformly bounded in \(n\).

Now let \(\mathbf F=(F,f)\) be an \(X^*\)-decorated test graph.
For each edge \(uv\in E(F)\), applying the preceding paragraph to
\(\varphi=f(uv)\) shows that \(\langle f(uv),W_n(x_u,x_v)\rangle\)
converges for a.e. \((x_v)_{v\in V(F)}\in[0,1]^{V(F)}\) and is uniformly
bounded in \(n\). Since \(F\) has only finitely many edges, the product
\[
\prod_{uv\in E(F)}\langle f(uv),W_n(x_u,x_v)\rangle
\]
also converges a.e. on \([0,1]^{V(F)}\) and is uniformly bounded in \(n\).
Hence, by the dominated convergence theorem, \(t(\mathbf F,W_n)\), which is
the integral
\[
\int_{[0,1]^{V(F)}}\prod_{uv\in E(F)}
\langle f(uv),W_n(x_u,x_v)\rangle
\prod_{i\in V(F)} dx_i
\]
converges as \(n\to\infty\). Since \(W_n\) is the equal-step graphon associated
with \(\mathbf G_n\), this means that \(t(\mathbf F,\mathbf G_n)\) converges
as \(n\to\infty\).
\end{proof}

We now apply the graphon representation property to this graph sequence to
obtain a limiting graphon. Star test graphs then yield the identities needed
to construct a representing density.

\begin{lemma}
\label{lem:star-graph-moment-identities}
Let \(X\) be a Banach space with the graphon representation property, and let
\(T:L^1([0,1])\to X\) be a bounded operator. For each \(\varphi\in X^*\), let
\(h_\varphi\in L^\infty([0,1])\) be defined by
\(\langle\varphi,Tu\rangle=\int_0^1u(x)h_\varphi(x)\,dx\) for every
\(u\in L^1([0,1])\).
Then there exists an \(X\)-valued graphon \(W\) such that, for every
\(k\ge1\) and every \(\varphi_1,\dots,\varphi_k\in X^*\) (with repetitions
allowed),
\[
\int_0^1\prod_{r=1}^k\langle\varphi_r,D_W(x)\rangle\,dx
=\int_0^1\prod_{r=1}^k h_{\varphi_r}(x)\,dx,
\]
where \(D_W(x):=\int_0^1 W(x,y)\,dy\) for a.e. \(x\).
\end{lemma}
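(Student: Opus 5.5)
Apply Lemma~\ref{lem:dyadic-finite-graphs-operator} to \(T\) to obtain the sequence \((\mathbf G_n)\). It is uniformly \(L^\infty\)-bounded, hence uniformly \(L^p\)-bounded for every \(1\le p<\infty\), and all its \(X^*\)-decorated densities converge. The graphon representation property therefore yields \(W\in\mathcal W_{\mathrm{Boch}}(X)\) with \(t(\mathbf F,\mathbf G_n)\to t(\mathbf F,W)\) for every \(X^*\)-decorated test graph \(\mathbf F\). Since \(\|W(x,\cdot)\|_X\in L^1([0,1])\) for a.e. \(x\) by Fubini, \(D_W(x)=\int_0^1W(x,y)\,dy\) is a well-defined Bochner integral for a.e. \(x\). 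Moreover, \(\langle\varphi,D_W(x)\rangle=\int_0^1\langle\varphi,W(x,y)\rangle\,dy\).

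For \(\varphi_1,\dots,\varphi_k\in X^*\), let \(\mathbf S\) be the star with center \(0\) and leaves \(1,\dots,k\), where the edge \(0r\) is decorated by \(\varphi_r\). Integrating out each leaf variable separately (Fubini, legitimate by the \(L^p\) bounds and H\"older) gives
\[
t(\mathbf S,W)=\int_0^1\prod_{r=1}^k\langle\varphi_r,D_W(x)\rangle\,dx .
\]
The same computation for the equal-step graphon \(W_n\) of \(\mathbf G_n\), combined with item~(3) of Lemma~\ref{lem:dyadic-finite-graphs-operator}, gives \(\int_0^1 W_n(x,y)\,dy=2^nT(1_{I_{n,i}})\) for \(x\in I_{n,i}\). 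Pairing with \(\varphi_r\), this equals \(\mathbb E(h_{\varphi_r}\mid\mathcal D_n)(x)\). Hence
\[
t(\mathbf S,\mathbf G_n)=\int_0^1\prod_{r=1}^k\mathbb E(h_{\varphi_r}\mid\mathcal D_n)(x)\,dx .
\]

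Finally, by the martingale convergence theorem \(\mathbb E(h_{\varphi_r}\mid\mathcal D_n)\to h_{\varphi_r}\) a.e. These functions are bounded uniformly by \(\|\varphi_r\|\,\|T\|\), so dominated convergence gives \(t(\mathbf S,\mathbf G_n)\to\int_0^1\prod_r h_{\varphi_r}\,dx\). Comparing this with \(t(\mathbf S,\mathbf G_n)\to t(\mathbf S,W)\) gives the identity.

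The main care point is the Fubini step for \(W\), since it is only \(L^p\)-integrable. The integrand \(\prod_r|\langle\varphi_r,W(x,y_r)\rangle|\) is integrable on \([0,1]^{k+1}\) by the generalized H\"older inequality already used to define densities. Its iterated integral factors as \(\int_0^1\prod_r\bigl(\int_0^1|\langle\varphi_r,W(x,y)\rangle|\,dy\bigr)dx\). This justifies the factorization and the a.e. finiteness of \(D_W\).
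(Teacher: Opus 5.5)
Your proof is correct and follows the paper's argument. You obtain \(W\) by applying the graphon representation property to the dyadic sequence, and you compute the decorated star \(\mathbf S_k\) in \(W\) by integrating out the leaves. You compute the same star in \(\mathbf G_n\) via the degree-average identity~(3) as \(\int_0^1\prod_r\mathbb E(h_{\varphi_r}\mid\mathcal D_n)\,dx\), and pass to the limit by martingale and dominated convergence. The only differences are cosmetic: you route the \(\mathbf G_n\) computation through the equal-step graphon \(W_n\) instead of the finite sums, and you spell out the H\"older/Fubini justification that the paper leaves implicit.
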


\begin{proof}
Applying Lemma~\ref{lem:dyadic-finite-graphs-operator} to \(T\), we obtain a sequence
\((\mathbf G_n)_{n\ge1}\) of \(X\)-decorated graphs.
By that lemma, this sequence is uniformly \(L^\infty\)-bounded, and its
densities converge against every \(X^*\)-decorated test graph.
By the graphon representation property, there exists an \(X\)-valued graphon
\(W\) such that \(t(\mathbf F,\mathbf G_n)\to t(\mathbf F,W)\) for every
\(X^*\)-decorated test graph \(\mathbf F\). Put
\(D_W(x):=\int_0^1 W(x,y)\,dy\) for a.e. \(x\).

Fix \(k\ge1\) and \(\varphi_1,\dots,\varphi_k\in X^*\). Let
\(\mathbf S_k\) be the star graph with center \(0\) and leaves \(1,\dots,k\),
where the edge joining \(0\) and \(r\) is decorated by \(\varphi_r\)
(see Figure~\ref{fig:star-test-graph}).
\begin{figure}
\centering
\begin{tikzpicture}[
    vertex/.style={circle, draw, inner sep=1.5pt, minimum size=6mm},
    edge label/.style={midway, fill=white, inner sep=1pt}
]

\node[vertex] (c) at (0,0) {$0$};
\node[vertex] (v1) at (2,1.6) {$1$};
\node[vertex] (v2) at (2,0.8) {$2$};
\node[vertex] (v3) at (2,0) {$3$};
\node at (2,-0.6) {$\vdots$};
\node[vertex] (vk) at (2,-1.6) {$k$};

\draw (c) -- (v1) node[edge label] {$\varphi_1$};
\draw (c) -- (v2) node[edge label] {$\varphi_2$};
\draw (c) -- (v3) node[edge label] {$\varphi_3$};
\draw (c) -- (vk) node[edge label] {$\varphi_k$};

\node[left=4pt of c] {center};
\node[right=4pt of v1] {leaf};
\node[right=4pt of v2] {leaf};
\node[right=4pt of v3] {leaf};
\node[right=4pt of vk] {leaf};

\end{tikzpicture}
\caption{The star test graph \(\mathbf S_k\). The edge joining the center
\(0\) to the leaf \(r\) is decorated by \(\varphi_r\).}
\label{fig:star-test-graph}
\end{figure}
First we compute the density of \(\mathbf S_k\) in \(W\). By the definition of the star test graph,
\(t(\mathbf S_k,W)=\int_{[0,1]^{k+1}}
\prod_{r=1}^k\langle\varphi_r,W(x_0,x_r)\rangle
\,dx_0dx_1\cdots dx_k\).
For fixed \(x_0\), the factor indexed by \(r\) depends only on the leaf
variable \(x_r\). Thus Fubini's theorem gives
\(t(\mathbf S_k,W)=\int_0^1\prod_{r=1}^k
(\int_0^1\langle\varphi_r,W(x_0,x_r)\rangle\,dx_r)\,dx_0\).
Since
\[
\int_0^1\langle\varphi_r,W(x_0,x_r)\rangle\,dx_r
=\langle\varphi_r,\int_0^1W(x_0,y)\,dy\rangle
=\langle\varphi_r,D_W(x_0)\rangle,
\]
we obtain
\begin{equation}\label{eq:star-density-W}
t(\mathbf S_k,W)=\int_0^1
\prod_{r=1}^k\langle\varphi_r,D_W(x_0)\rangle\,dx_0.
\end{equation}

We next compute the same density in \(\mathbf G_n\). The vertices of
\(\mathbf S_k\) are the center \(0\) and the leaves \(1,\dots,k\). Hence a
map from \(V(\mathbf S_k)\) to \(V(\mathbf G_n)\) is determined by the image
\(i_0\) of the center and the images \(i_1,i_2,\dots,i_k\) of the leaves
(\(i_r\in\{0,1,\dots,2^n-1\}\)). Since
\(|V(\mathbf G_n)|=2^n\) and \(|V(\mathbf S_k)|=k+1\), the definition of
homomorphism density gives
\(t(\mathbf S_k,\mathbf G_n)=2^{-n(k+1)}
\sum_{i_0,\dots,i_k=0}^{2^n-1}
\prod_{r=1}^k\langle\varphi_r,a_n(i_0i_r)\rangle\).
This can be rewritten as
\[
\begin{aligned}
t(\mathbf S_k,\mathbf G_n)
&=2^{-n}\sum_{i=0}^{2^n-1}
\prod_{r=1}^k\left(2^{-n}\sum_{j=0}^{2^n-1}
\langle\varphi_r,a_n(ij)\rangle\right)\\
&=2^{-n}\sum_{i=0}^{2^n-1}
\prod_{r=1}^k\left\langle
\varphi_r,2^{-n}\sum_{j=0}^{2^n-1}a_n(ij)
\right\rangle\\
&=2^{-n}\sum_{i=0}^{2^n-1}
\prod_{r=1}^k\langle\varphi_r,2^nT(1_{I_{n,i}})\rangle.
\end{aligned}
\]
The first equality follows by rearranging the finite sums. Since each
\(\varphi_r\) is linear,
\[
2^{-n}\sum_{j=0}^{2^n-1}
\langle\varphi_r,a_n(ij)\rangle
=
\left\langle
\varphi_r,2^{-n}\sum_{j=0}^{2^n-1}a_n(ij)
\right\rangle,
\]
which gives the second equality. The last equality follows from
(3) in Lemma~\ref{lem:dyadic-finite-graphs-operator}.

On the other hand, let \(\mathcal D_n\) denote the sub-\(\sigma\)-algebra
generated by the intervals \((I_{n,k})_k\). For \(x\in I_{n,i}\), we have
\(\mathbb E(h_{\varphi_r}\mid\mathcal D_n)(x)
=\frac{1}{|I_{n,i}|}\int_{I_{n,i}}h_{\varphi_r}(s)\,ds
=2^n\int_{I_{n,i}}h_{\varphi_r}(s)\,ds\).
Since \(h_{\varphi_r}\) represents \(\varphi_r\circ T\), this is equal to
\(2^n\langle\varphi_r,T1_{I_{n,i}}\rangle
=\langle\varphi_r,2^nT(1_{I_{n,i}})\rangle\).
Hence
\[
\int_0^1\prod_{r=1}^k\mathbb E(h_{\varphi_r}\mid\mathcal D_n)(x)\,dx
=2^{-n}\sum_{i=0}^{2^n-1}
\prod_{r=1}^k\langle\varphi_r,2^nT(1_{I_{n,i}})\rangle.
\]
Combining this with the previous computation, we obtain
\[
t(\mathbf S_k,\mathbf G_n)=\int_0^1
\prod_{r=1}^k\mathbb E(h_{\varphi_r}\mid\mathcal D_n)(x)\,dx.
\]

The sequence \((\mathcal D_n)_n\) generates the Borel \(\sigma\)-algebra on
\([0,1]\). Hence, by the martingale convergence theorem,
\(\mathbb E(h_{\varphi_r}\mid \mathcal D_n)(x)\to h_{\varphi_r}(x)\) for
a.e. \(x\). Moreover, these conditional expectations are uniformly bounded.
Therefore, by the dominated convergence theorem,
\(t(\mathbf S_k,\mathbf G_n)=\int_0^1
\prod_{r=1}^k\mathbb E(h_{\varphi_r}\mid\mathcal D_n)(x)\,dx
\to\int_0^1\prod_{r=1}^k h_{\varphi_r}(x)\,dx\).

By the choice of \(W\),
\(t(\mathbf S_k,\mathbf G_n)\to t(\mathbf S_k,W)\). Combining this with
\eqref{eq:star-density-W}, we obtain
\(\int_0^1\prod_{r=1}^k\langle\varphi_r,D_W(x)\rangle\,dx
=\int_0^1\prod_{r=1}^k h_{\varphi_r}(x)\,dx\).
This proves the lemma.
\end{proof}

We can now prove the result for separable Banach spaces.

\begin{theorem}
\label{thm:separable-grp-implies-rnp}
Let \(X\) be a separable Banach space. If \(X\) has the graphon representation
property, then \(X\) has the Radon--Nikod\'ym property.
\end{theorem}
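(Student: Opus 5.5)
By Theorem~\ref{thm:interval-testing-rnp} and Theorem~\ref{thm:operator-characterization-rnp}, it suffices to show that every bounded operator \(T:L^1([0,1])\to X\) is representable. The plan is to take \(g:=D_W\) from Lemma~\ref{lem:star-graph-moment-identities} as the representing density.

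Fix such a \(T\) and apply Lemma~\ref{lem:star-graph-moment-identities}. This gives a graphon \(W\) and the function \(D_W(x)=\int_0^1W(x,y)\,dy\). Since \(W\) is Bochner measurable with \(\|W\|_X\in L^p\), Fubini's theorem for Bochner integrals shows that \(D_W\) is Bochner measurable and lies in \(L^p([0,1];X)\) for all finite \(p\).

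The key step is to upgrade the moment identities to the pointwise identity
\[
\langle\varphi,D_W(x)\rangle=h_\varphi(x)\quad\text{for a.e. }x,
\]
for each \(\varphi\in X^*\). Fix \(\varphi\) and set \(u=\langle\varphi,D_W\rangle\) and \(v=h_\varphi\). Applying the lemma with \(k=2\) and the choices \((\varphi,\varphi)\), \((\varphi,\varphi)\) paired with \(h\), and mixed pairs is not directly possible, since the lemma only compares \(\langle\varphi_r,D_W\rangle\) with \(h_{\varphi_r}\) and never mixes \(D_W\) with \(h\). I would therefore use linearity. Since \(h_{\varphi+\psi}=h_\varphi+h_\psi\), the identities show that the \(\mathbb R^m\)-valued random vectors \((\langle\varphi_1,D_W\rangle,\dots,\langle\varphi_m,D_W\rangle)\) and \((h_{\varphi_1},\dots,h_{\varphi_m})\) have the same joint moments. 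The \(h\)'s are bounded, so their moments determine the distribution, and the joint laws coincide. This gives equality in distribution, not yet a.e. equality, and that gap is the main obstacle.

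To pass to pointwise equality, I would include the functionals that encode the position \(x\). Because \(T\) is applied to indicators, \(h_\varphi\) is related to \(\varphi(T1_A)\). The star identities only see the law of \(D_W\), however, and composing \(W\) with a measure-preserving map preserves all densities, so pointwise equality cannot be forced. The plan is instead to define the representing function up to rearrangement. Choose a countable norming set \(\{\varphi_m\}\subset X^*\), which exists since \(X\) is separable. The map \(x\mapsto(h_{\varphi_m}(x))_m\) and the map \(x\mapsto(\langle\varphi_m,D_W(x)\rangle)_m\) have the same law on \(\mathbb R^{\mathbb N}\), and the second determines \(D_W(x)\). Write \(\iota:X\to\mathbb R^{\mathbb N}\) for the Borel injection \(z\mapsto(\varphi_m(z))_m\). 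The law \(\mu\) of \(D_W\) is then a Radon measure on \(X\) that is carried by \(\iota^{-1}\) of the support of the joint law of the \(h\)'s. Hence there is a Borel map \(\Phi\) with \((h_{\varphi_m}(x))_m\in\iota(X)\) for a.e. \(x\), and we set \(g(x):=\iota^{-1}((h_{\varphi_m}(x))_m)\). The set \(\iota(X)\) is Borel by Lusin--Souslin, so \(g\) is Borel and hence strongly measurable, since \(X\) is separable. By construction \(\varphi_m(g)=h_{\varphi_m}\) a.e., and \(g\) has the same law as \(D_W\), so \(g\in L^1([0,1];X)\).

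It remains to extend the identity to all \(\varphi\). For general \(\varphi\), the joint law of \((h_\varphi,h_{\varphi_m})_m\) equals that of \((\varphi(D_W),\varphi_m(D_W))_m\), and on the latter side \(\varphi(D_W)\) is a measurable function \(\varphi\circ\iota^{-1}\) of the coordinates \((\varphi_m(D_W))_m\). Equality of laws then transfers this functional relation, giving \(h_\varphi=\varphi\circ\iota^{-1}((h_{\varphi_m})_m)=\varphi(g)\) a.e. Consequently \(\langle\varphi,Tu\rangle=\int u\,\varphi(g)=\langle\varphi,\int ug\rangle\) for every \(u\in L^\infty\). Boundedness of \(g\) follows from \(|h_\varphi|\le\|T\|\|\varphi\|\) on the countable norming set, so \(g\in L^\infty([0,1];X)\), and density of \(L^\infty\) in \(L^1\) then gives \(Tu=\int ug\) for all \(u\in L^1\). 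Hence \(T\) is representable and \(X\) has the Radon--Nikod\'ym property.
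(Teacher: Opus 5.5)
Your proposal is correct and follows essentially the same route as the paper. The star-graph moment identities give equality in law of \((h_{\varphi_m})_m\) and \((\langle\varphi_m,D_W\rangle)_m\) on \(\R^{\N}\); Lusin--Souslin makes \(\iota(X)\) Borel, so that \((h_{\varphi_m})_m\in\iota(X)\) a.e.; and \(g:=\iota^{-1}\circ(h_{\varphi_m})_m\) is the representing density. Your sentence about the law being ``carried by \(\iota^{-1}\) of the support'' should simply be replaced by evaluating the equal laws on the Borel set \(\iota(X)\).

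Your final step, extending \(\varphi(g)=h_\varphi\) to all \(\varphi\in X^*\), is valid but unnecessary. The paper only checks \(\langle\psi_j,Tu\rangle=\langle\psi_j,\int ug\rangle\) on the countable family, which separates points of \(X\). The paper also makes moment-determinacy explicit: it first uses even moments to show that \(|\langle\psi_j,D\rangle|\le\|T\|\) a.e., and only then applies Stone--Weierstrass.
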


\begin{proof}
By Theorem~\ref{thm:interval-testing-rnp}, it is enough to prove that \(X\)
has the Radon--Nikod\'ym property with respect to the Lebesgue unit interval.
By Theorem~\ref{thm:operator-characterization-rnp}, this amounts to proving
that every bounded operator \(T:L^1([0,1])\to X\) is representable. Fix such an
operator \(T\).

For each \(\varphi\in X^*\), the linear functional \(\varphi\circ T\) belongs
to \((L^1([0,1]))^*\). By the Riesz representation theorem, there exists
\(h_\varphi\in L^\infty([0,1])\) such that
\(\langle \varphi,Tu\rangle=\int_0^1u(x)h_\varphi(x)\,dx\) for every
\(u\in L^1([0,1])\).
Moreover \(\|h_\varphi\|_{L^\infty}=\|\varphi\circ T\|\le
\|\varphi\|\,\|T\|\).

By Lemma~\ref{lem:star-graph-moment-identities}, there exists an \(X\)-valued
Bochner graphon \(W\) such that, for every \(k\ge1\) and every
\(\varphi_1,\dots,\varphi_k\in X^*\),
\begin{equation}
\tag{*}
\label{eq:separable-rnp-star-moments}
\int_0^1
\prod_{r=1}^k \langle \varphi_r,D(x)\rangle\,dx
=
\int_0^1
\prod_{r=1}^k h_{\varphi_r}(x)\,dx,
\end{equation}
where \(D(x):=D_W(x)=\int_0^1 W(x,y)\,dy\).

Since \(X\) is separable, the dual unit ball \(B_{X^*}\) is weak-* compact and
metrizable. Choose a countable weak-* dense subset
\((\psi_j)_{j\ge1}\subset B_{X^*}\). This family norms \(X\): for every
\(x\in X\), \(\|x\|=\sup_{j\ge1}|\langle \psi_j,x\rangle|\).
Put \(h_j:=h_{\psi_j}\). Since \(\|h_j\|_{L^\infty}\le \|T\|\), and since
only countably many functions are involved, we may choose representatives so
that \(|h_j(x)|\le\|T\|\) for every \(x\in[0,1]\) and \(j\ge1\).

Define \(J:X\to\R^{\N}\) by
\(J(x):=(\langle \psi_j,x\rangle)_{j\ge1}\).
The family \((\psi_j)\) separates points of \(X\), so \(J\) is injective. Each
coordinate is continuous, hence \(J\) is Borel measurable. Since both \(X\) and
\(\R^{\N}\) are Polish spaces, the Lusin--Souslin theorem
\cite[Theorem~6.8.6]{bogachev2007measure} implies that
\(J(X)\subset\R^{\N}\) is Borel and that \(J^{-1}:J(X)\to X\) is Borel
measurable.

We now compare the following two \(\R^{\N}\)-valued measurable functions. Let
\(\mathbf h(x):=(h_j(x))_{j\ge1}\), and put \(\mathbf q:=J\circ D\). Thus
\(\mathbf q(x)=(\langle \psi_j,D(x)\rangle)_{j\ge1}\) for all \(x\).
Set \(K:=[-\|T\|,\|T\|]^{\N}\). By the choice of representatives,
\(\mathbf h(x)\in K\) for every \(x\). Fix \(j\ge1\). Applying
\eqref{eq:separable-rnp-star-moments} with
\(\psi_j\) repeated \(2m\) times gives
\(\int_0^1|\langle \psi_j,D(x)\rangle|^{2m}\,dx
=\int_0^1|h_j(x)|^{2m}\,dx\le\|T\|^{2m}\).
Let \(\lambda\) denote Lebesgue measure on \([0,1]\). Therefore, for every
\(a>\|T\|\),
\(\lambda\{x:|\langle \psi_j,D(x)\rangle|>a\}
\le(\|T\|/a)^{2m}\).
Letting \(m\to\infty\), we get
\(|\langle \psi_j,D(x)\rangle|\le \|T\|\) for a.e. \(x\). Since the set of
indices \(j\) is countable, by changing \(\mathbf q\) on a null set, we may
and do assume that \(\mathbf q(x)\in K\) for every \(x\).

We claim that \(\mathbf q\) and \(\mathbf h\) have the same distribution.
Write \(q_j\) for the \(j\)-th coordinate of \(\mathbf q\), and fix \(d\ge1\).
For a multi-index
\(\alpha=(\alpha_1,\dots,\alpha_d)\in\mathbb N^d\), applying
\eqref{eq:separable-rnp-star-moments} to the list in which \(\psi_j\) is
repeated \(\alpha_j\) times gives
\(\int_0^1\prod_{j=1}^d q_j(x)^{\alpha_j}\,dx
=\int_0^1\prod_{j=1}^d\langle\psi_j,D(x)\rangle^{\alpha_j}\,dx
=\int_0^1\prod_{j=1}^d h_j(x)^{\alpha_j}\,dx\).
Thus the \(K_d\)-valued measurable functions \((q_1,\dots,q_d)\) and
\((h_1,\dots,h_d)\), where \(K_d:=[-\|T\|,\|T\|]^d\), have the same polynomial
moments. By the Stone--Weierstrass theorem, polynomials are uniformly dense in
\(C(K_d)\), and hence these two measurable functions have the same distribution
on \(K_d\).

The finite-dimensional distributions agree for every \(d\). Since the Borel
\(\sigma\)-algebra of \(K\) is generated by finite-coordinate cylinders,
\(\mathbf q\) and \(\mathbf h\) have the same distribution on \(\R^{\N}\).
Since \(\mathbf q=J\circ D\) a.e., we have \(\mathbf q(x)\in J(X)\) for a.e. \(x\);
hence the same distribution implies that \(\mathbf h(x)\in J(X)\) for a.e.
\(x\).

On this full-measure set define \(g(x):=J^{-1}(\mathbf h(x))\), and define
\(g(x)=0\) outside it. Since \(J^{-1}:J(X)\to X\) is Borel measurable, \(g\) is
measurable. By the Pettis measurability theorem, and since \(X\) is separable,
\(g\) is Bochner measurable. Moreover, for a.e. \(x\),
\(\langle \psi_j,g(x)\rangle=h_j(x)\) for all \(j\).
Therefore
\(\|g(x)\|=\sup_{j\ge1}|\langle\psi_j,g(x)\rangle|
=\sup_{j\ge1}|h_j(x)|\le\|T\|\)
for a.e. \(x\). Hence \(g\in L^\infty([0,1];X)\). Let
\(u\in L^1([0,1])\). For
every \(j\ge1\),
\[
\begin{aligned}
\langle\psi_j,Tu\rangle
&=\int_0^1u(x)h_j(x)\,dx \\
&=\int_0^1u(x)\langle\psi_j,g(x)\rangle\,dx \\
&=\left\langle\psi_j,\int_0^1u(x)g(x)\,dx\right\rangle.
\end{aligned}
\]
Since \((\psi_j)\) separates points of \(X\), we conclude that
\(Tu=\int_0^1 u(x)g(x)\,dx\). Since this holds for arbitrary
\(u\in L^1([0,1])\), the operator \(T\) is representable.
\end{proof}

\subsection{Separable determination and the general case}
\label{subsec:separable-descent-general-rnp}

To establish separable determination, we use the following four-cycle lemma.
Although it follows from more general results
\cite[(E.22)--(E.23), p.~67]{janson2013graphons}, the form below is sufficient
for our purposes and admits a short direct proof, which we include for completeness.

\begin{lemma}
\label{lem:four-cycle-zero-kernel}
Let \(u\in L^2([0,1]^2)\) be real-valued and symmetric. Then the integral
\[
I
:=
\int_{[0,1]^4}
u(x_1,x_2)u(x_2,x_3)u(x_3,x_4)u(x_4,x_1)
\,dx_1\,dx_2\,dx_3\,dx_4
\]
is finite. If \(I=0\), then \(u=0\) a.e. on
\([0,1]^2\).
\end{lemma}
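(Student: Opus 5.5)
The plan is to view \(u\) as the kernel of a Hilbert--Schmidt operator on \(L^2([0,1])\) and to express \(I\) as the squared \(L^2\)-norm of the kernel of \(T_u^2\). Define \(v(x_1,x_3):=\int_0^1 u(x_1,x_2)u(x_2,x_3)\,dx_2\). By Cauchy--Schwarz,
\[
|v(x_1,x_3)|\le \|u(x_1,\cdot)\|_{L^2}\,\|u(\cdot,x_3)\|_{L^2},
\]
and by Fubini both factors are finite for a.e.\ \(x_1\), respectively a.e.\ \(x_3\). Squaring and integrating gives \(\|v\|_{L^2([0,1]^2)}\le\|u\|_{L^2}^2<\infty\), so \(v\) is defined a.e.\ and lies in \(L^2\). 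Symmetry of \(u\) makes \(v\) symmetric as well.

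Next I will check that the integrand of \(I\) is absolutely integrable. I apply the same Cauchy--Schwarz estimate to \(|u|\): with \(\tilde v(x_1,x_3):=\int|u(x_1,x_2)||u(x_2,x_3)|\,dx_2\), Tonelli gives
\[
\int|u_{12}u_{23}u_{34}u_{41}|=\int \tilde v(x_1,x_3)^2\,dx_1dx_3\le\|u\|_2^4 .
\]
Here I use the symmetry \(u(x_3,x_4)u(x_4,x_1)=u(x_1,x_4)u(x_4,x_3)\). So \(I\) is finite. Fubini then lets me integrate out \(x_2\) and \(x_4\) separately, and symmetry gives \(\int u(x_3,x_4)u(x_4,x_1)\,dx_4=v(x_1,x_3)\). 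Therefore
\[
I=\int_{[0,1]^2} v(x_1,x_3)^2\,dx_1\,dx_3=\|v\|_2^2 .
\]

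If \(I=0\), then \(v=0\) a.e. This means the Hilbert--Schmidt operator \(T_u f(x)=\int u(x,y)f(y)\,dy\) satisfies \(T_u^2=T_v=0\). Since \(T_u\) is self-adjoint (because \(u\) is real and symmetric), for every \(f\) we get \(\|T_uf\|^2=\langle T_u^2 f,f\rangle=0\), so \(T_u=0\). The Hilbert--Schmidt norm of \(T_u\) equals \(\|u\|_{L^2([0,1]^2)}\), hence \(u=0\) a.e.

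A more elementary route to this last step avoids the Hilbert--Schmidt identification. If \(T_u=0\), then \(\int u(x,y)f(y)g(x)\,dx\,dy=0\) for all \(f,g\in L^2\). Products \(g(x)f(y)\) span a dense subspace of \(L^2([0,1]^2)\), so \(u\) is orthogonal to a dense set and vanishes a.e.

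The only delicate point is justifying Fubini and the a.e.\ definedness of \(v\). This is handled by the Tonelli bound above, so I expect no real obstacle.
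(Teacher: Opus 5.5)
Your proof is correct and follows essentially the same route as the paper. By symmetry, your kernel \(v\) coincides with the paper's \(k(y,z)=\int_0^1 u(x,y)u(x,z)\,dx\), finiteness and \(I=\|v\|_2^2\) are obtained the same way, and your identity \(\|T_uf\|_2^2=\langle T_u^2f,f\rangle=0\) is exactly the paper's \(\|g_f\|_2^2=\int k(y,z)f(y)f(z)\,dy\,dz=0\). The only difference is the last step: the paper uses a countable dense subset of \(L^2([0,1])\) to show \(u(x,\cdot)=0\) for a.e.\ \(x\), while you use the Hilbert--Schmidt norm identity or the density of products \(g(x)f(y)\) in \(L^2([0,1]^2)\), and both are valid.
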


\begin{proof}
For a.e. \((y,z)\in[0,1]^2\), define
\(k(y,z):=\int_0^1u(x,y)u(x,z)\,dx\). For a.e. \((y,z)\),
\[
|k(y,z)|
\le
\left(\int_0^1|u(x,y)|^2\,dx\right)^{1/2}
\left(\int_0^1|u(x,z)|^2\,dx\right)^{1/2},
\]
and hence
\[
\|k\|_2^2
\le
\int_{[0,1]^2}
\|u(\cdot,y)\|_2^2\|u(\cdot,z)\|_2^2
\,dy\,dz
=
\|u\|_2^4.
\]
Thus \(k\in L^2([0,1]^2)\). The same estimate applied to \(|u|\) shows
that \(I\) is finite.

Using the symmetry of \(u\) and Fubini's theorem, we obtain
\[
I=\int_{[0,1]^2}k(y,z)^2\,dy\,dz.
\]
Thus \(I=0\) implies that \(k=0\) a.e.

For \(f\in L^2([0,1])\), define
\(g_f(x):=\int_0^1u(x,y)f(y)\,dy\). The inequality
\(|g_f(x)|\le\|u(x,\cdot)\|_2\|f\|_2\) shows that
\(g_f\in L^2([0,1])\). By Fubini's theorem,
\[
\begin{aligned}
\|g_f\|_2^2
&=
\int_{[0,1]^3}
u(x,y)u(x,z)f(y)f(z)
\,dx\,dy\,dz\\
&=
\int_{[0,1]^2}k(y,z)f(y)f(z)\,dy\,dz
=0.
\end{aligned}
\]
Therefore \(g_f=0\) a.e. for every \(f\in L^2([0,1])\).

Choose a countable dense subset \(D\subset L^2([0,1])\). Since
\(D\) is countable, there is a null set \(N\subset[0,1]\) such
that, for every \(x\notin N\), the function \(u(x,\cdot)\) belongs to
\(L^2([0,1])\) and \(\int_0^1u(x,y)f(y)\,dy=0\) for every
\(f\in D\).
By density, \(u(x,\cdot)\) is orthogonal to every element of \(L^2([0,1])\),
and hence \(u(x,\cdot)=0\) in \(L^2([0,1])\). This holds for every
\(x\notin N\), so
\(\|u\|_2^2=\int_0^1\|u(x,\cdot)\|_2^2\,dx=0\). Therefore \(u=0\)
a.e. on \([0,1]^2\).
\end{proof}

\begin{theorem}[Separable determination]
\label{thm:separable-descent}
The graphon representation property is separably determined; that is, a
Banach space \(X\) has the graphon representation property if and only if
every closed separable subspace of \(X\) has the graphon representation
property.
\end{theorem}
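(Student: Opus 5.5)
The plan is to prove the two implications separately. Both rest on two facts. First, functionals pass between a subspace and the whole space: restriction maps \(X^*\) to \(Y^*\), and Hahn--Banach extends elements of \(Y^*\) to \(X^*\). Second, the four-cycle Lemma~\ref{lem:four-cycle-zero-kernel} lets us force a representing graphon into a prescribed closed subspace.

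\emph{Subspaces inherit the property.} Assume \(X\) has the graphon representation property and let \(Y\subset X\) be a closed separable subspace. Take a sequence \((\mathbf G_n)\) of \(Y\)-decorated target graphs that is uniformly \(L^p\)-bounded and whose densities converge against every \(Y^*\)-decorated test graph.
\begin{enumerate}
\item View \((\mathbf G_n)\) as \(X\)-decorated. For an \(X^*\)-decorated test graph \(\mathbf F=(F,f)\), we have \(t(\mathbf F,\mathbf G_n)=t(\mathbf F|_Y,\mathbf G_n)\), where \(\mathbf F|_Y\) has decorations \(f(e)|_Y\). So these densities converge, and the property of \(X\) gives \(W\in\mathcal W_{\mathrm{Boch}}(X)\) representing the limits.
\item Since \(W\) is Bochner measurable, it is essentially separably valued. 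So there is a closed separable subspace \(Z\supset Y\) with \(W\in Z\) a.e.
\item Because \(Z/Y\) is separable, Hahn--Banach gives countably many \(\phi_k\in Y^\perp\subset X^*\) with \(Y=Z\cap\bigcap_k\ker\phi_k\). Concretely, take functionals norming a countable dense subset of \(Z/Y\) and extend them.
\item For each \(k\), let \(\mathbf C_4^{(k)}\) be the four-cycle with every edge decorated by \(\phi_k\). Then \(t(\mathbf C_4^{(k)},\mathbf G_n)=0\) for all \(n\), since all decorations lie in \(Y\). Passing to the limit gives \(t(\mathbf C_4^{(k)},W)=0\).
\item The function \(u_k:=\langle\phi_k,W\rangle\) is real, symmetric and lies in \(L^2([0,1]^2)\), and \(t(\mathbf C_4^{(k)},W)\) is exactly the integral \(I\) of Lemma~\ref{lem:four-cycle-zero-kernel} for \(u_k\). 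The lemma therefore gives \(u_k=0\) a.e.
\item Taking the countable union of null sets, \(W(x,y)\in Z\cap\bigcap_k\ker\phi_k=Y\) a.e. Redefine \(W\) as \(0\) on the exceptional null set, chosen symmetric, so that \(W\) becomes a \(Y\)-valued Bochner graphon.
\item For a \(Y^*\)-decorated test graph, extend each decoration to \(X^*\) by Hahn--Banach. Both \(t(\cdot,\mathbf G_n)\) and \(t(\cdot,W)\) are unchanged by the extension, so \(W\) represents the limits in \(Y\).
\end{enumerate}

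\emph{The property of all separable subspaces gives the property of \(X\).} Let \((\mathbf G_n)\) be an admissible \(X\)-decorated sequence. Its decorations form a countable set, so they lie in the closed separable subspace \(Y_0\) they span.
\begin{enumerate}
\item Every \(Y_0^*\)-decorated test graph extends, via Hahn--Banach, to an \(X^*\)-decorated one with the same densities in each \(\mathbf G_n\). So the densities converge against all \(Y_0^*\)-decorated test graphs, and the \(L^p\) bounds are unchanged.
\item The property of \(Y_0\) then yields a \(Y_0\)-valued Bochner graphon \(W\). This \(W\) is also Bochner measurable as an \(X\)-valued function.
\item For any \(X^*\)-decorated test graph \(\mathbf F\), the limit of \(t(\mathbf F,\mathbf G_n)\) equals \(t(\mathbf F|_{Y_0},W)=t(\mathbf F,W)\). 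So \(W\) represents the limits in \(X\).
\end{enumerate}

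The only substantive step is step~(5) of the first implication. There one must check that the four-cycle density of \(W\) is exactly the integral in Lemma~\ref{lem:four-cycle-zero-kernel} applied to \(\langle\phi_k,W\rangle\), including square integrability and symmetry. One must also ensure that countably many annihilating functionals suffice, which is why we first pass to the separable range \(Z\) of \(W\).
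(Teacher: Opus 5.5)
Your proposal is correct and follows the paper's proof step for step: restricting and Hahn--Banach-extending test decorations, enlarging \(Y\) to a separable \(Z\) that contains the essential range of \(W\), eliminating the annihilating directions with four-cycles decorated by functionals in \(Y^\perp\) via Lemma~\ref{lem:four-cycle-zero-kernel}, and using the closed span of all decorations for the converse. The only difference is cosmetic: you get the countable separating family in \(Y^\perp\) from norming functionals on the separable quotient \(Z/Y\), whereas the paper takes a countable weak-* dense subset of \(\{z^*\in B_{Z^*}: z^*|_Y=0\}\) and concludes by weak-* continuity and Hahn--Banach, and both work equally well.
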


\begin{proof}
First suppose that \(X\) has the graphon representation property. Let
\(Y\subset X\) be a closed separable subspace. We prove that \(Y\) has the
graphon representation property.

Let \((\mathbf G_n)\) be a sequence of \(Y\)-decorated target graphs
satisfying the two hypotheses in Definition~\ref{def:grp}. Regard the same
sequence as \(X\)-decorated. If \(\mathbf F=(F,f)\) is an
\(X^*\)-decorated test graph, let \(\mathbf F|_Y\) denote the
\(Y^*\)-decorated test graph obtained by restricting every edge decoration to
\(Y\). Since the decorations of \(\mathbf G_n\) lie in \(Y\), we have
\(t(\mathbf F,\mathbf G_n)=t(\mathbf F|_Y,\mathbf G_n)\). Thus the convergence
of all \(Y^*\)-decorated densities implies the convergence of all
\(X^*\)-decorated densities, while the norm bounds are unchanged. Hence, by
the graphon representation property of \(X\), there is an \(X\)-valued
Bochner graphon \(W\) such that
\(t(\mathbf F,\mathbf G_n)\to t(\mathbf F,W)\) for every
\(X^*\)-decorated test graph
\(\mathbf F\).

We show that \(W\) is \(Y\)-valued a.e. Since \(W\) is Bochner measurable,
it has essentially separable range. Since \(Y\) is separable, there is a
closed separable subspace \(Z\subset X\) such that \(Y\subset Z\) and
\(W(s,t)\in Z\) for a.e. \((s,t)\). Let
\(K:=\{z^*\in B_{Z^*}:z^*|_Y=0\}\). Since \(Z\) is separable,
\(B_{Z^*}\) is weak-* compact and metrizable, and hence \(K\) has a countable
weak-* dense subset \(K_0\).

Fix \(z^*\in K_0\). By the Hahn--Banach theorem, there exists an extension
\(\widetilde z^*\in X^*\) of \(z^*\). Since \(Y\subset Z\) and
\(z^*|_Y=0\), the extension \(\widetilde z^*\) vanishes on \(Y\). Let
\(C_4^{\widetilde z^*}\) denote the \(X^*\)-decorated test graph obtained by
decorating every edge of the four-vertex cycle \(C_4\) by
\(\widetilde z^*\). Since each \(\mathbf G_n\) is
\(Y\)-decorated, \(t(C_4^{\widetilde z^*},\mathbf G_n)=0\) for every \(n\).
The density \(t(C_4^{\widetilde z^*},W)\) is the limit of this sequence, and
is therefore zero. Put
\(u(s,t):=\langle\widetilde z^*,W(s,t)\rangle\). Then \(u\) is a real-valued
symmetric function in \(L^2([0,1]^2)\). By definition, the density
\(t(C_4^{\widetilde z^*},W)\) is precisely the fourfold integral in
Lemma~\ref{lem:four-cycle-zero-kernel}. That lemma therefore gives
\(\langle z^*,W(s,t)\rangle
=\langle\widetilde z^*,W(s,t)\rangle=u(s,t)=0\) for a.e. \((s,t)\).

Since \(K_0\) is countable, outside a null set we have \(W(s,t)\in Z\) and
\(\langle z^*,W(s,t)\rangle=0\) for every \(z^*\in K_0\). Fix \((s,t)\)
outside this null set. The map
\(K\ni z^*\mapsto\langle z^*,W(s,t)\rangle\) is continuous with respect to
the weak-* topology and vanishes on the dense subset \(K_0\). Hence it
vanishes on all of \(K\). By the Hahn--Banach theorem, this implies that
\(W(s,t)\in Y\). Thus \(W\) is \(Y\)-valued a.e. and may be regarded as a
\(Y\)-valued Bochner graphon.

Finally, let \(\mathbf F=(F,f)\) be a \(Y^*\)-decorated test graph. For each
\(ij\in E(F)\), use the Hahn--Banach theorem to choose an extension
\(\widetilde f(ij)\in X^*\) of \(f(ij)\), and put
\(\widetilde{\mathbf F}:=(F,\widetilde f)\). Since the decorations of
\(\mathbf G_n\) lie in \(Y\), the definition of density gives
\(t(\mathbf F,\mathbf G_n)=t(\widetilde{\mathbf F},\mathbf G_n)\) for every
\(n\). The convergence for \(X^*\)-decorated test graphs gives
\(t(\widetilde{\mathbf F},\mathbf G_n)\to
t(\widetilde{\mathbf F},W)\). Since \(W\) is \(Y\)-valued, the definition of
density also gives \(t(\widetilde{\mathbf F},W)=t(\mathbf F,W)\).
Consequently, \(t(\mathbf F,\mathbf G_n)\to t(\mathbf F,W)\), and hence \(Y\)
has the graphon representation property.

Conversely, suppose that every closed separable subspace of \(X\) has the
graphon representation property. Let \((\mathbf G_n)\) be a sequence of
\(X\)-decorated target graphs satisfying the two hypotheses in
Definition~\ref{def:grp}. The set of all decorations appearing in the sequence
\((\mathbf G_n)\) is countable. Let \(Y\) be the closed linear span of these
decorations. Then \(Y\) is a closed separable subspace of \(X\), and each
\(\mathbf G_n\) is \(Y\)-decorated.

By assumption, \(Y\) has the graphon representation property. For a
\(Y^*\)-decorated test graph \(\mathbf F\), use the same construction as above
to obtain an \(X^*\)-decorated test graph \(\widetilde{\mathbf F}\). Then
\(t(\mathbf F,\mathbf G_n)=t(\widetilde{\mathbf F},\mathbf G_n)\), and the
latter sequence converges by hypothesis. Thus
\(t(\mathbf F,\mathbf G_n)\) converges for every \(Y^*\)-decorated test graph
\(\mathbf F\). The norm bounds are unchanged, so \((\mathbf G_n)\) satisfies
the two hypotheses in the definition of the graphon representation property
for \(Y\). Hence there is a \(Y\)-valued Bochner graphon \(W\) such that
\(t(\mathbf F,\mathbf G_n)\to t(\mathbf F,W)\) for every
\(Y^*\)-decorated test graph \(\mathbf F\).

Regard \(W\) as an \(X\)-valued graphon, and let \(\mathbf F=(F,f)\) be an
\(X^*\)-decorated test graph. Let \(\mathbf F|_Y\) be the
\(Y^*\)-decorated test graph obtained by restricting every edge decoration of
\(\mathbf F\) to \(Y\). Since the decorations of \(\mathbf G_n\) and the
values of \(W\) lie in \(Y\), the definition of density gives
\(t(\mathbf F,\mathbf G_n)=t(\mathbf F|_Y,\mathbf G_n)\) and
\(t(\mathbf F,W)=t(\mathbf F|_Y,W)\). The density convergence for
\(Y^*\)-decorated test graphs applied to \(\mathbf F|_Y\) therefore gives
\(t(\mathbf F,\mathbf G_n)\to t(\mathbf F,W)\). Thus \(X\) has the graphon
representation property.
\end{proof}

We now obtain the general Radon--Nikod\'ym consequence.

\begin{corollary}
\label{cor:grp-implies-rnp}
Let \(X\) be a Banach space. If \(X\) has the graphon representation property,
then \(X\) has the Radon--Nikod\'ym property.
\end{corollary}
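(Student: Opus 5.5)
The plan is to reduce to the separable case using the separable determination theorem together with the fact that the Radon--Nikod\'ym property is itself separably determined. Suppose \(X\) has the graphon representation property. By Theorem~\ref{thm:separable-descent}, every closed separable subspace \(Y\subset X\) has the graphon representation property. Then Theorem~\ref{thm:separable-grp-implies-rnp} applies to each such \(Y\), so every closed separable subspace of \(X\) has the Radon--Nikod\'ym property.

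It remains to pass from separable subspaces to \(X\). I would invoke the standard fact that the Radon--Nikod\'ym property is separably determined: \(X\) has it if and only if every closed separable subspace does \cite{diestelUhl1977vector}. To keep the argument close to the paper's tools, I would also indicate a direct proof. By Theorem~\ref{thm:interval-testing-rnp} and Theorem~\ref{thm:operator-characterization-rnp}, it suffices to show that an arbitrary bounded operator \(T:L^1([0,1])\to X\) is representable. Since \(L^1([0,1])\) is separable, the range \(T(L^1([0,1]))\) is separable, and its closure \(Y\) is a closed separable subspace of \(X\). Regard \(T\) as a bounded operator into \(Y\). Since \(Y\) has the Radon--Nikod\'ym property, Theorem~\ref{thm:operator-characterization-rnp} gives \(g\in L^\infty([0,1];Y)\) with \(Tu=\int_0^1 ug\,dx\) for all \(u\). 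Viewing \(g\) as \(X\)-valued, it remains Bochner measurable and bounded, and the Bochner integral in \(Y\) agrees with the one in \(X\) because the inclusion is a bounded linear map. Hence \(T\) is representable as an \(X\)-valued operator.

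No serious obstacle is expected. The only points to check are that Theorem~\ref{thm:operator-characterization-rnp} is applied to the Lebesgue interval, which is a finite measure space, and that the Bochner measurability and representation of \(g\) are preserved under the isometric inclusion \(Y\hookrightarrow X\). Both are routine.
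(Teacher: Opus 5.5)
Your proposal is correct and follows the paper's proof. The paper also combines Theorem~\ref{thm:separable-descent} with Theorem~\ref{thm:separable-grp-implies-rnp} to get the Radon--Nikod\'ym property for every closed separable subspace, then cites the separable determination of that property; your direct argument for that last step (operators on the separable space \(L^1([0,1])\) have separable range, then Theorems~\ref{thm:interval-testing-rnp} and~\ref{thm:operator-characterization-rnp}) is also valid, though it only moves the reliance onto the cited interval-testing theorem.
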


\begin{proof}
Assume that \(X\) has the graphon representation property. Let \(Y\subset X\)
be a closed separable subspace. By Theorem~\ref{thm:separable-descent}, \(Y\)
has the graphon representation property. Since \(Y\) is separable,
Theorem~\ref{thm:separable-grp-implies-rnp} shows that \(Y\) has the
Radon--Nikod\'ym property.
Thus every closed separable subspace of \(X\) has the Radon--Nikod\'ym
property. Since the Radon--Nikod\'ym property is separably determined
\cite[Theorem~III.3.2]{diestelUhl1977vector}
(see also \cite[Corollary~2.12]{pisier2016martingales}), \(X\)
has the Radon--Nikod\'ym property.
\end{proof}

\begin{remark}\label{rem:bounded-necessary-sequences}
The graph sequences to which the graphon representation property is applied
in the proofs of Proposition~\ref{prop:grp-implies-wsc} and
Theorem~\ref{thm:separable-grp-implies-rnp} are uniformly
\(L^\infty\)-bounded. Consequently, the same necessary conditions remain
valid for the bounded variant introduced in
Section~\ref{sec:bounded-representation}; see
Proposition~\ref{prop:bounded-inherited}(1).
\end{remark}

\section{Characterizations in three classes}
\label{sec:positive-characterizations}

This section characterizes the graphon representation property in three
natural classes, using the necessary conditions established in the previous
section. We first consider spaces with separable dual. We then establish a
separable-predual sufficient condition and apply it to Banach lattices and
dual Banach spaces.

\subsection{Spaces with separable dual}
\label{subsec:separable-dual}

\begin{theorem}[Separable-dual characterization]
\label{thm:separable-dual-characterization}
Let \(X\) be a Banach space with \(X^*\) separable. Then \(X\) has the
graphon representation property if and only if \(X\) is reflexive.
\end{theorem}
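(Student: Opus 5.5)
For the forward direction, we first invoke the necessary conditions already established. If \(X\) has the graphon representation property, then \(X\) is weakly sequentially complete by Proposition~\ref{prop:grp-implies-wsc}. Since \(X^*\) is separable, \(X\) contains no copy of \(\ell_1\), because \(\ell_1^*=\ell_\infty\) is nonseparable and separability of the dual passes to subspaces via quotients. Rosenthal's \(\ell_1\) theorem then shows that every bounded sequence in \(X\) has a weakly Cauchy subsequence. Weak sequential completeness makes that subsequence weakly convergent. By Eberlein--\v{S}mulian, \(B_X\) is weakly compact, so \(X\) is reflexive. (Corollary~\ref{cor:grp-implies-rnp} is not needed here, since separable duals already have the RNP.)

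For the converse, assume \(X\) is reflexive with \(X^*\) separable. Then \(X\) is separable as well, and we set \(B:=X^*\), so that \(B^*=X^{**}=X\). Let \((\mathbf G_n)\) satisfy the hypotheses of Definition~\ref{def:grp}. The equal-step graphons \(W_{\mathbf G_n}\) are weak-* graphons in \(\mathcal W_{\mathrm{w}^*}(B^*)\), because they are simple functions. Their homomorphism densities against \(B\)-decorated test graphs coincide with the \(X^*\)-decorated densities. Hypotheses (1) and (2) of Theorem~\ref{thm:kls-3.7} hold for any countable dense \(A\subset B\), since we in fact have convergence for all test graphs. Theorem~\ref{thm:kls-3.7} therefore yields a weak-* graphon \(U:[0,1]^2\to X\) with \(t(\mathbf F,\mathbf G_n)\to t(\mathbf F,U)\) for every \(X^*\)-decorated \(\mathbf F\), together with \(\|U\|_p\le c_p\).

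It remains to check that \(U\) is an \(X\)-valued Bochner graphon. Weak-* measurability with respect to \(B=X^*\) is exactly weak measurability of \(U\) as an \(X\)-valued map. Since \(X\) is separable, the Pettis measurability theorem makes \(U\) strongly (Bochner) measurable. Symmetry and the \(L^p\) bounds on \(\|U\|_X\) transfer directly, the norm being measurable. So \(W:=U\in\mathcal W_{\mathrm{Boch}}(X)\) represents the limits, and the densities defined in the two frameworks agree.

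The main obstacle is the forward direction: combining weak sequential completeness with the separable-dual hypothesis to obtain reflexivity. This depends on Rosenthal's \(\ell_1\) theorem, a deep external result, and on checking carefully that a separable dual rules out \(\ell_1\) subspaces. The converse is essentially bookkeeping once the identification \(B^*=X\) is in place.
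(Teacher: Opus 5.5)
Your proof is correct and follows the paper's argument. The forward direction combines weak sequential completeness (Proposition~\ref{prop:grp-implies-wsc}) with the classical fact that a space with separable dual is reflexive if and only if it is weakly sequentially complete; the paper cites this fact, and you prove it via Rosenthal and Eberlein--\v{S}mulian. The converse is the same application of Theorem~\ref{thm:kls-3.7} with \(B=X^*\), followed by the Pettis measurability theorem.

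Rosenthal's theorem is more than you need, and the forward direction is not the obstacle you describe. A diagonal argument over a countable dense subset of \(X^*\) already gives every bounded sequence in \(X\) a weakly Cauchy subsequence. Your parenthetical about the RNP is a non sequitur, since \(c_0\) has separable dual but fails the RNP. Its conclusion is still right: only weak sequential completeness is needed.
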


\begin{proof}
Assume first that \(X\) has the graphon representation property. By the
necessary condition proved in the previous section, \(X\) is weakly
sequentially complete. Since a Banach space with separable dual is reflexive
if and only if it is weakly sequentially complete
\cite[Section~3.5.1, p.~81]{pietsch2007history},
\(X\) is reflexive.

Conversely, assume that \(X\) is reflexive and that \(X^*\) is separable. Let
\((\mathbf G_n)\) be a sequence of
\(X\)-decorated target graphs which is uniformly \(L^p\)-bounded and such
that, for every \(X^*\)-decorated test graph \(\mathbf F\), the density
sequence \(t(\mathbf F,\mathbf G_n)\) converges. Let \(W_n\) be the equal-step
\(X\)-valued graphon associated with \(\mathbf G_n\). For every
\(X^*\)-decorated test graph \(\mathbf F\), we have
\(t(\mathbf F,\mathbf G_n)=t(\mathbf F,W_n)\).

Set \(B:=X^*\), so that \(B^*=X^{**}\). Since \(X\) is reflexive, the
canonical embedding \(J:X\to X^{**}\) is onto. Hence, after replacing \(W_n\)
by \(J\circ W_n\), we may regard \(W_n\) as a \(B^*\)-valued weak-* graphon.
This replacement does not change any density, since for all \(s,t\),
\(\langle x^*,J\circ W_n(s,t)\rangle=\langle x^*,W_n(s,t)\rangle\). It also
does not change the \(L^p\)-bounds, since \(J\) is an isometry. Therefore the
sequence \((W_n)\), viewed as a sequence of \(B^*\)-valued weak-* graphons,
satisfies the two hypotheses of Theorem~\ref{thm:kls-3.7}. Hence there exists
a \(B^*\)-valued weak-* graphon \(U\) such that \(t(\mathbf F,W_n)\to
t(\mathbf F,U)\) for every \(B\)-decorated test graph \(\mathbf F\).

Now we use reflexivity again. Since \(B^*=X^{**}=J(X)\), we may write
\(U=J\circ W\) for a unique symmetric \(X\)-valued function \(W\). The weak-*
measurability of \(U\) means that, for every \(x^*\in X^*\), the scalar
function \((s,t)\mapsto \langle x^*,U(s,t)\rangle\) is measurable. Since
\(U=J\circ W\), this is exactly
\((s,t)\mapsto \langle x^*,W(s,t)\rangle\). Thus \(W\) is weakly measurable
as an \(X\)-valued function. Because \(X^*\) is separable, the space \(X\) is
separable. Therefore, by the Pettis measurability theorem, the weakly
measurable function \(W\) is Bochner measurable.

Since \(J\) is an isometry and \(\|U(\cdot,\cdot)\|\in L^p\) for every
\(1\le p<\infty\), we have \(\|W(\cdot,\cdot)\|\in L^p\) for every
\(1\le p<\infty\). Hence \(W\) is an \(X\)-valued Bochner graphon. Finally, for
every \(X^*\)-decorated test graph \(\mathbf F\), the densities of
\(\mathbf F\) in \(U\) and in \(W\) agree. Therefore
\(t(\mathbf F,W_n)\to t(\mathbf F,W)\) for every \(X^*\)-decorated test graph
\(\mathbf F\). Thus the limiting densities are represented by an
\(X\)-valued Bochner graphon. Hence \(X\) has the graphon representation property.
This proves the equivalence.
\end{proof}

\subsection{A separable-predual sufficient condition}
\label{subsec:separable-predual}

We next turn to a sufficient condition for the graphon representation property.
The main idea is best illustrated by the case \(X=\ell_1\). Let
\((\mathbf G_n)\) be a sequence of \(\ell_1\)-decorated target graphs
that is uniformly \(L^p\)-bounded for every \(1\le p<\infty\), and suppose
that its densities converge against every
\(\ell_\infty\,(\cong\ell_1^*)\)-decorated test graph. Since
\(\ell_1\cong c_0^*\), applying
Theorem~\ref{thm:kls-3.7} with predual \(c_0\) produces an
\(\ell_1\)-valued (weak-*) graphon representing the limiting densities of all
\(c_0\)-decorated test graphs. The issue is whether the same graphon also
represents the limiting densities of all \(\ell_\infty\)-decorated test
graphs. Weak sequential completeness of \(\ell_1\) provides precisely this
upgrade. Since the same upgrade will also be used in the next section, we
isolate its general form in the following lemma.

\begin{lemma}
\label{lem:full-dual-upgrade}
Let \(X\) be a weakly sequentially complete Banach space, and let
\(D\subset X^*\) be weak-* dense. Let \((\mathbf G_n)\) be a
sequence of \(X\)-decorated target graphs such that
\(t(\mathbf F,\mathbf G_n)\) converges for every
\(X^*\)-decorated test graph \(\mathbf F\). Suppose that
\(W\in\mathcal W_{\mathrm{Boch}}(X)\) satisfies
\[
t(\mathbf F,\mathbf G_n)\longrightarrow t(\mathbf F,W)
\]
for every \(D\)-decorated test graph \(\mathbf F\). Then the same convergence
holds for every \(X^*\)-decorated test graph.
\end{lemma}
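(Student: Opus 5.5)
The plan is to upgrade the decorations one edge at a time, using multilinearity of the density in the edge decorations. The key fact is that fixing all decorations except one turns the density into the pairing of the free functional with an element of \(X\) itself.

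\emph{Setup and induction.} Fix a finite simple graph \(F\) and enumerate its edges \(e_1,\dots,e_m\). For \(0\le k\le m\), let \(\mathcal C_k\) be the class of \(X^*\)-decorated test graphs \((F,f)\) with \(f(e_l)\in D\) for all \(l>k\); the first \(k\) edges may carry arbitrary elements of \(X^*\). By hypothesis, the convergence \(t(\mathbf F,\mathbf G_n)\to t(\mathbf F,W)\) holds on \(\mathcal C_0\). I will prove by induction on \(k\) that it holds on \(\mathcal C_k\). The case \(k=m\) is the claim.

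\emph{Inductive step.} Assume convergence on \(\mathcal C_{k-1}\). Fix decorations \(f(e_l)\) for \(l\ne k\) with \(f(e_l)\in D\) for \(l>k\), and write \(e_k=ab\). For \(\varphi\in X^*\), let \(\mathbf F_\varphi\) be the test graph with \(\varphi\) placed on \(e_k\).

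On the finite-graph side, linearity of \(\varphi\) gives
\[
t(\mathbf F_\varphi,\mathbf G_n)=\langle\varphi,Y_n\rangle,
\qquad
Y_n:=\int_{[0,1]^{V(F)}}W_n(x_a,x_b)\prod_{l\ne k}\langle f(e_l),W_n(x_{e_l})\rangle\prod_i dx_i\in X,
\]
where \(W_n\) is the equal-step graphon of \(\mathbf G_n\). This integral is really a finite average, so \(Y_n\in X\). Since \(t(\mathbf F_\varphi,\mathbf G_n)\) converges for every \(\varphi\in X^*\), the sequence \((Y_n)\) is weakly Cauchy. Weak sequential completeness of \(X\) then gives a weak limit \(Y\in X\), and \(t(\mathbf F_\varphi,\mathbf G_n)\to\langle\varphi,Y\rangle\) for all \(\varphi\in X^*\).

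On the graphon side, \(t(\mathbf F_\varphi,W)=\langle\varphi,Y_W\rangle\), where \(Y_W\) is the analogous integral with \(W\) in place of \(W_n\). For \(Y_W\) to make sense it must be a Bochner integral. The integrand is Bochner measurable, being \(W(x_a,x_b)\) times a scalar measurable function. Its norm is bounded by \(\prod_l\|f(e_l)\|\) times a product of \(m\) factors \(\|W(x_i,x_j)\|\). This bound is integrable by Hölder's inequality, since \(\|W\|\in L^p\) for all finite \(p\), exactly as in the well-definedness argument for densities. Since bounded functionals commute with Bochner integrals, the pairing identity holds.

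\emph{Identifying the two limits.} For \(\varphi\in D\), the graph \(\mathbf F_\varphi\) lies in \(\mathcal C_{k-1}\), so the induction hypothesis gives \(\langle\varphi,Y-Y_W\rangle=0\). The map \(\psi\mapsto\langle\psi,Y-Y_W\rangle\) is weak-* continuous on \(X^*\) and vanishes on the weak-* dense set \(D\), so it vanishes on all of \(X^*\). Hence \(t(\mathbf F_\varphi,\mathbf G_n)\to t(\mathbf F_\varphi,W)\) for every \(\varphi\in X^*\), which is convergence on \(\mathcal C_k\).

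The only point needing care is Bochner integrability of \(Y_W\), which Hölder's inequality handles. The essential input is weak sequential completeness: it places the limit \(Y\) in \(X\) rather than in \(X^{**}\). Only for elements of \(X\) does vanishing on a weak-* dense set force vanishing everywhere.
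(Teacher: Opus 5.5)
Your proposal is correct and follows essentially the same argument as the paper. In both, fixing all but one decoration turns the density into a pairing with an element of \(X\) (the paper uses the single difference element \(a_n\), the finite average minus the Bochner integral, where you keep \(Y_n\) and \(Y_W\) separate). Weak sequential completeness then places the limit in \(X\), and weak-* density of \(D\) upgrades one edge at a time; your induction over the classes \(\mathcal C_k\) is exactly the paper's passage from \(D^m\) to \(X^*\times D^{m-1}\) and onward to \((X^*)^m\).
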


\begin{proof}
Let \(F\) be a loopless simple graph. If \(E(F)=\varnothing\), the assertion
is immediate. We may therefore assume that \(m:=|E(F)|\ge1\) and enumerate
the edges as \(E(F)=\{e_1,\dots,e_m\}\). For
\(\Phi_1,\dots,\Phi_m\in X^*\), let
\(\mathbf F^{\Phi_1,\dots,\Phi_m}\) denote the \(X^*\)-decorated test graph
whose underlying graph is \(F\) and whose edge \(e_i\) is decorated by
\(\Phi_i\).

Define \(\Lambda_F:(X^*)^m\to\mathbb{R}\) by
\begin{equation*}
\Lambda_F(\Phi_1,\dots,\Phi_m)
=
\lim_{n\to\infty}
\left[
 t(\mathbf F^{\Phi_1,\dots,\Phi_m},\mathbf G_n)
 -
 t(\mathbf F^{\Phi_1,\dots,\Phi_m},W)
\right].
\end{equation*}
This is well-defined, because the first density sequence converges by
assumption and the second term is fixed. Moreover, by the assumed convergence
for \(D\)-decorated test graphs,
\[
\Lambda_F(d_1,\dots,d_m)=0
\qquad
(d_1,\dots,d_m\in D).
\]

We claim that \(\Lambda_F\) is separately weak-* continuous on \((X^*)^m\).
That is, for every \(r=1,\dots,m\) and every fixed choice of
\(\Phi_1,\dots,\Phi_{r-1},\Phi_{r+1},\dots,\Phi_m\in X^*\), the map
\begin{equation*}
\Psi\mapsto
\Lambda_F(\Phi_1,\dots,\Phi_{r-1},\Psi,\Phi_{r+1},\dots,\Phi_m)
\end{equation*}
is continuous with respect to the weak-* topology \(\sigma(X^*,X)\). Without
loss of generality, we prove it for the first variable.

Fix \(\Phi_2,\dots,\Phi_m\in X^*\). Write
\(\mathbf G_n=(G_n,g_n)\), and write \(e_i=a_i b_i\) for
\(i=1,\dots,m\). Set
\[
\begin{aligned}
a_n
&:=
\frac{1}{|V(G_n)|^{|V(F)|}}
\sum_{\varphi:V(F)\to V(G_n)}
g_n\bigl(\varphi(a_1b_1)\bigr)
\prod_{q=2}^m
\left\langle
\Phi_q,g_n\bigl(\varphi(a_qb_q)\bigr)
\right\rangle\\
&\quad-
\int_{[0,1]^{V(F)}}
W(x_{a_1},x_{b_1})
\prod_{q=2}^m
\left\langle \Phi_q,W(x_{a_q},x_{b_q})\right\rangle
\prod_{i\in V(F)}dx_i
\in X.
\end{aligned}
\]

The first term is a finite sum. The integrand in the second term is Bochner
measurable, and its norm is bounded by
\[
\left(\prod_{q=2}^m\|\Phi_q\|\right)
\prod_{q=1}^m\|W(x_{a_q},x_{b_q})\|.
\]
By Hölder's inequality,
\[
\begin{aligned}
&
\int_{[0,1]^{V(F)}}
\prod_{q=1}^m\|W(x_{a_q},x_{b_q})\|
\prod_{i\in V(F)}dx_i
\\
&\quad\le
\prod_{q=1}^m
\left(
\int_{[0,1]^{V(F)}}
\|W(x_{a_q},x_{b_q})\|^m
\prod_{i\in V(F)}dx_i
\right)^{1/m}
\\
&\quad=
\|W\|_m^m<\infty.
\end{aligned}
\]
Thus \(a_n\) is well defined. For every \(\Psi\in X^*\), linearity and the
definition of homomorphism density give
\begin{equation*}
\langle \Psi,a_n\rangle
=
t(\mathbf F^{\Psi,\Phi_2,\dots,\Phi_m},\mathbf G_n)
-
t(\mathbf F^{\Psi,\Phi_2,\dots,\Phi_m},W).
\end{equation*}
The right-hand side converges as \(n\to\infty\), since
\(t(\mathbf F,\mathbf G_n)\) converges for every \(X^*\)-decorated test graph
\(\mathbf F\). Hence
\((a_n)\) is weakly Cauchy in \(X\).

Since \(X\) is weakly sequentially complete, there exists \(a\in X\) such that
\(a_n\to a\) weakly in \(X\). Therefore, for every \(\Psi\in X^*\),
\begin{align*}
\Lambda_F(\Psi,\Phi_2,\dots,\Phi_m)
&=
\lim_{n\to\infty}
\left[
 t(\mathbf F^{\Psi,\Phi_2,\dots,\Phi_m},\mathbf G_n)
 -
 t(\mathbf F^{\Psi,\Phi_2,\dots,\Phi_m},W)
\right] \\
&=
\lim_{n\to\infty}\langle \Psi,a_n\rangle \\
&=
\langle \Psi,a\rangle .
\end{align*}
Thus the map \(\Psi\mapsto\Lambda_F(\Psi,\Phi_2,\dots,\Phi_m)\) is evaluation
at the point \(a\in X\). Hence it is continuous for the weak-* topology
\(\sigma(X^*,X)\). The same argument applies to every variable, so
\(\Lambda_F\) is separately weak-* continuous.

Since \(D\) is weak-* dense in \(X^*\) and \(\Lambda_F=0\) on \(D^m\),
separate weak-* continuity gives, first,
\(\Lambda_F=0\) on \(X^*\times D^{m-1}\). Applying the same argument to the
second variable gives \(\Lambda_F=0\) on \((X^*)^2\times D^{m-2}\).
Continuing variable by variable, we obtain
\(\Lambda_F=0\) on \((X^*)^m\). Therefore, for all
\(\Phi_1,\dots,\Phi_m\in X^*\),
\begin{equation*}
t(\mathbf F^{\Phi_1,\dots,\Phi_m},\mathbf G_n)
\to
t(\mathbf F^{\Phi_1,\dots,\Phi_m},W).
\end{equation*}

Since \(F\) was arbitrary, this holds for every \(X^*\)-decorated test graph.
\end{proof}

We now apply the lemma to prove the following sufficient condition.

\begin{theorem}
\label{thm:separable-predual-sufficient}
Let \(X\) be a separable, weakly sequentially complete Banach space. Suppose
that there exists a Banach space \(B\) such that \(X\cong B^*\).
Then \(X\) has the graphon representation property.
\end{theorem}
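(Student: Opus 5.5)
Since \(X\cong B^*\) with \(X\) separable, the predual \(B\) is separable as well, because a Banach space whose dual is separable is itself separable. We may therefore apply Theorem~\ref{thm:kls-3.7} with predual \(B\), which is the route already sketched for \(\ell_1\).

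First, fix an isometric isomorphism \(\iota:X\to B^*\). Let \(J_B:B\to B^{**}\cong X^*\) be the canonical embedding, so that \(\langle J_B b, x\rangle=\langle b,\iota x\rangle\). Let \((\mathbf G_n)\) satisfy the hypotheses of Definition~\ref{def:grp}, and let \(W_n\) be the associated equal-step graphons. Then \(\iota\circ W_n\) takes finitely many values, so it is a \(B^*\)-valued weak-* graphon with the same \(L^p\)-norms. For every \(B\)-decorated test graph \(\mathbf F\), the density \(t(\mathbf F,\iota\circ W_n)\) equals \(t(J_B\mathbf F,\mathbf G_n)\), where \(J_B\mathbf F\) is \(\mathbf F\) with each decoration replaced by its image under \(J_B\). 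These densities converge by hypothesis (2).

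Next, pick a countable dense subset \(A\) of \(B\); it is generating. Theorem~\ref{thm:kls-3.7} then yields a weak-* graphon \(U\in\mathcal W_{\mathrm{w}^*}(B^*)\) with \(\|U\|_p\le c_p\) and \(t(\mathbf F,\iota\circ W_n)\to t(\mathbf F,U)\) for all \(B\)-decorated \(\mathbf F\). Put \(W:=\iota^{-1}\circ U\). This is symmetric, and its norm lies in \(L^p\) for every finite \(p\) because \(\iota\) is an isometry.

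The main point is Bochner measurability of \(W\). We know that \(\langle J_Bb,W\rangle\) is measurable for every \(b\in B\). Let \(\mathcal M\) be the set of \(\phi\in X^*\) for which \(\langle\phi,W\rangle\) is measurable. This set is a linear subspace containing \(J_B(B)\), and it is closed under pointwise limits of sequences. I would show that every \(\phi\in X^*\) lies in \(\mathcal M\). Since \(X\) is separable, \(B_{X^*}\) is weak-* metrizable, so \(J_B(B)\cap B_{X^*}\) is weak-* sequentially dense in \(B_{X^*}\) by Goldstine. Hence \(\phi\) is the pointwise (weak-*) limit of a sequence \(\phi_k\in J_B(B)\), and \(\langle\phi,W(s,t)\rangle=\lim_k\langle\phi_k,W(s,t)\rangle\) is measurable.

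An alternative route avoids Goldstine. Because \(X\) is separable, its norm is given by a countable norming subset of \(J_B(B)\), so \(X\) is Polish and \(W\) is Borel measurable with respect to the weak topology \(\sigma(X,J_B(B))\). That topology generates the Borel \(\sigma\)-algebra of \(X\), either by the Lusin--Souslin argument used in Theorem~\ref{thm:separable-grp-implies-rnp} or because norm-closed balls are weak-* closed. Either route gives weak measurability of \(W\). Since \(X\) is separable, the Pettis measurability theorem then makes \(W\) Bochner measurable, so \(W\in\mathcal W_{\mathrm{Boch}}(X)\).

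Finally, for every \(D\)-decorated test graph with \(D:=J_B(B)\subset X^*\), we have \(t(\mathbf F,\mathbf G_n)\to t(\mathbf F,W)\) by construction. The set \(D\) is weak-* dense in \(X^*\) by Goldstine's theorem, or simply because \(D\) separates the points of \(X\). Weak sequential completeness of \(X\) together with Lemma~\ref{lem:full-dual-upgrade} therefore upgrades this convergence to all \(X^*\)-decorated test graphs. This gives the graphon representation property for \(X\).
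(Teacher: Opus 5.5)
Your proposal is correct and follows essentially the same route as the paper. It applies Theorem~\ref{thm:kls-3.7} with the separable predual \(B\), gets weak measurability of the limit from Goldstine's theorem plus weak-* metrizability of bounded sets (then Pettis for Bochner measurability), and upgrades from \(B\)-decorated to all \(X^*\)-decorated test graphs via Lemma~\ref{lem:full-dual-upgrade}; the only differences are cosmetic, namely carrying the isometry \(\iota\) explicitly instead of identifying \(X\) with \(B^*\), and adding an optional cylinder-\(\sigma\)-algebra argument for measurability.
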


\begin{proof}
The graphon representation property is invariant under isometric isomorphisms,
so it suffices to prove that \(B^*\) has this property. Since
\(X\cong B^*\), the space \(B^*\) is separable and weakly sequentially
complete. Since \(B^*\) is separable, so is \(B\).
Let \((\mathbf G_n)\) be a sequence of \(B^*\)-decorated target graphs which is
uniformly \(L^p\)-bounded and such that, for every \(B^{**}\)-decorated test
graph \(\mathbf F\), the density sequence
\(t(\mathbf F,\mathbf G_n)\) converges. Let \(W_n\) be the equal-step
\(B^*\)-valued graphon associated with \(\mathbf G_n\).

We regard \(B\) as a subspace of \(B^{**}\) by the canonical embedding. Thus
every \(B\)-decorated test graph is also a \(B^{**}\)-decorated test graph.
Hence the assumed convergence for all \(B^{**}\)-decorated test graphs
implies convergence for all \(B\)-decorated test graphs.

We now apply Theorem~\ref{thm:kls-3.7} with predual \(B\). The sequence
\((W_n)\) is uniformly \(L^p\)-bounded as a sequence of \(B^*\)-valued weak-*
graphons, and the densities against all \(B\)-decorated test graphs converge.
Therefore there exists a \(B^*\)-valued weak-* graphon
\(U:[0,1]^2\to B^*\) such that
\begin{equation}
\label{eq:kls-B-density}
t(\mathbf F,W_n)\to t(\mathbf F,U)
\end{equation}
for every \(B\)-decorated test graph \(\mathbf F\).

We next show that \(U\) is weakly measurable as a \(B^*\)-valued function.
By definition of weak-* measurability, for every \(b\in B\), the scalar function
\((s,t)\mapsto \langle b,U(s,t)\rangle\) is measurable.
Let \(\Phi\in B^{**}\). Since \(B^*\) is separable, the weak-* topology
\(\sigma(B^{**},B^*)\) is metrizable on bounded subsets of \(B^{**}\). By
Goldstine's theorem
\cite[Theorem~3.32]{aliprantisBurkinshaw2006positive}, \(B\) is weak-* dense
in \(B^{**}\). Hence there exists a sequence
\((b_j)\subset B\) such that \(b_j\to\Phi\) in
\(\sigma(B^{**},B^*)\). Therefore, for every \((s,t)\),
\(\langle b_j,U(s,t)\rangle\to\langle\Phi,U(s,t)\rangle\).
Each function
\((s,t)\mapsto \langle b_j,U(s,t)\rangle\) is measurable, so
\((s,t)\mapsto \langle \Phi,U(s,t)\rangle\) is measurable. Thus \(U\) is weakly
measurable as a \(B^*\)-valued function.

Since \(B^*\) is separable, the Pettis measurability theorem implies that
\(U\) is Bochner measurable. Together with the \(L^p\)-bound supplied by
Theorem~\ref{thm:kls-3.7}, \(U\) is a \(B^*\)-valued Bochner graphon. We
denote this graphon by \(W\).

It remains to show that \(W\) represents the limiting densities for all
\(B^{**}\)-decorated test graphs. For every \(B^{**}\)-decorated test graph
\(\mathbf F\), the definition of \(W_n\) gives
\(t(\mathbf F,W_n)=t(\mathbf F,\mathbf G_n)\).
It follows from this identity and \eqref{eq:kls-B-density} that
\(t(\mathbf F,\mathbf G_n)\to t(\mathbf F,W)\)
for every \(B\)-decorated test graph \(\mathbf F\). Applying
Lemma~\ref{lem:full-dual-upgrade} to \(B^*\) with \(D=B\), we obtain the same
convergence for every \(B^{**}\)-decorated test graph \(\mathbf F\). Thus
\(W\) represents all the limiting densities of \((\mathbf G_n)\), and
\(B^*\) has the graphon representation property. Hence \(X\) has the graphon
representation property.
\end{proof}

\subsection{Banach lattices and dual Banach spaces}
\label{subsec:banach-lattices}
\label{subsec:dual-banach-spaces}

We now use Theorem~\ref{thm:separable-predual-sufficient} to characterize the
graphon representation property for Banach lattices and dual Banach spaces. We
begin with Banach lattices.

\begin{theorem}[Banach lattice characterization]
\label{thm:banach-lattice-characterization}
Let \(X\) be a Banach lattice. Then \(X\) has the graphon representation
property if and only if \(X\) has the Radon--Nikodým property.
\end{theorem}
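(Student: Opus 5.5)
The forward direction is immediate from Corollary~\ref{cor:grp-implies-rnp}: if \(X\) has the graphon representation property, then \(X\) has the Radon--Nikod\'ym property. The lattice structure plays no role here.

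For the converse, assume \(X\) is a Banach lattice with the Radon--Nikod\'ym property. By Theorem~\ref{thm:separable-descent}, it suffices to show that every closed separable subspace of \(X\) has the graphon representation property. Instead of working with arbitrary subspaces, I would reduce to separable sublattices. Given a sequence \((\mathbf G_n)\) satisfying the hypotheses of Definition~\ref{def:grp}, the countably many decorations generate a closed separable sublattice \(Y\subset X\): take the closure of the countable vector sublattice generated over \(\mathbb Q\). The second half of the proof of Theorem~\ref{thm:separable-descent} does not use that \(Y\) is a general subspace, only that it is a closed separable subspace containing all decorations with the graphon representation property. Hence it suffices to show that every separable Banach lattice \(Y\) with the Radon--Nikod\'ym property has the graphon representation property. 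Such a \(Y\) inherits the Radon--Nikod\'ym property, since that property passes to closed subspaces.

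The structural input is a classical fact about Banach lattices. A Banach lattice with the Radon--Nikod\'ym property contains no copy of \(c_0\), since \(c_0\) fails the property. A Banach lattice containing no copy of \(c_0\) is a KB-space, and KB-spaces are weakly sequentially complete (Lozanovskii/Meyer-Nieberg). Thus \(Y\) is a separable weakly sequentially complete Banach lattice with the Radon--Nikod\'ym property.

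To apply Theorem~\ref{thm:separable-predual-sufficient}, I also need \(Y\cong B^*\) for some \(B\), and this is the main obstacle. A separable KB-space need not be a dual space; \(L^1\) is the standard example, though \(L^1\) fails the Radon--Nikod\'ym property. My first attempt would invoke the known theorem that a separable Banach lattice with the Radon--Nikod\'ym property is a dual Banach lattice; this is, I believe, due to Talagrand or to Ghoussoub--Talagrand via order-continuity and separation arguments. The predual would be the order-continuous part of \(Y^*\). Granting this, \(Y\) is separable, weakly sequentially complete and isometric to a dual space. Theorem~\ref{thm:separable-predual-sufficient} then gives the graphon representation property for \(Y\), and the reduction above transfers it to \(X\).

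If that duality theorem is not available in exactly this form, a fallback is the following. Apply Theorem~\ref{thm:kls-3.7} with \(B\) the separable space \(Y^*_{\mathrm{oc}}\) of order-continuous functionals, which separates points of a KB-space. This produces a \(B^*\)-valued limit. One would then use the Radon--Nikod\'ym property, via a star-graph argument like that in Lemma~\ref{lem:star-graph-moment-identities}, to pull the limit back into \(Y\). Finally, Lemma~\ref{lem:full-dual-upgrade} with the weak-* dense set \(D=B\) upgrades convergence to all \(Y^*\)-decorated test graphs.
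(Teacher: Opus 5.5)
Your main line is essentially the paper's proof. Necessity comes from Corollary~\ref{cor:grp-implies-rnp}; for sufficiency you pass to a closed separable sublattice, get weak sequential completeness from the absence of \(c_0\), get \(L\cong B^*\) from Talagrand's theorem \cite[Theorem~1]{talagrand1983structure}, and conclude with Theorem~\ref{thm:separable-predual-sufficient} and separable determination; taking directly the sublattice generated by the decorations is an inessential variant.

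Your side remark about the predual is wrong, although you never need it. A KB-space has order continuous norm, so \(Y^*_{\mathrm{oc}}=Y^*\) (e.g.\ \(\ell_\infty\) for \(Y=\ell_1\), whose predual is \(c_0\)), which is neither the predual nor separable in general. Your fallback via Theorem~\ref{thm:kls-3.7} with \(B=Y^*_{\mathrm{oc}}\) therefore would not apply.
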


\begin{proof}
The necessity follows from Corollary~\ref{cor:grp-implies-rnp}.
Conversely, suppose that \(X\) has the Radon--Nikodým property. By
Theorem~\ref{thm:separable-descent}, it suffices to show that every closed
separable subspace \(E\subset X\) has the graphon representation property. Fix
such an \(E\). In a Banach lattice, every closed separable subspace is contained
in a closed separable sublattice, so there exists a closed separable sublattice
\(L\subset X\) containing \(E\)
\cite[p.~204, Exercise~9]{aliprantisBurkinshaw2006positive}. Since the
Radon--Nikodým property passes to closed subspaces, \(L\) has this property. By
Talagrand's theorem \cite[Theorem~1]{talagrand1983structure},
\(L\cong B^*\) for some Banach lattice \(B\). Moreover, \(L\) is weakly
sequentially complete. Indeed, \(c_0\) does not have the Radon--Nikodým
property, and therefore \(L\) cannot contain a closed subspace isomorphic to
\(c_0\) (see \cite[p.~60]{diestelUhl1977vector} or
\cite[p.~55]{pisier2016martingales}). It follows from
\cite[Theorem~4.60]{aliprantisBurkinshaw2006positive} that \(L\) is weakly
sequentially complete. Therefore
Theorem~\ref{thm:separable-predual-sufficient} shows that \(L\) has the graphon
representation property. Theorem~\ref{thm:separable-descent} then gives the
same property for \(E\). Since \(E\) was arbitrary, another application of
Theorem~\ref{thm:separable-descent} gives the graphon representation property
for \(X\).
\end{proof}

We next turn to dual Banach spaces. Recall that, as one of its
equivalent definitions, a Banach space is \emph{Asplund} if every closed separable
subspace has separable dual \cite[Theorem~2.34]{phelps1993convex}. The
Namioka--Phelps--Stegall theorem states that a
Banach space \(B\) is Asplund if and only if \(B^*\) has the Radon--Nikodým
property \cite[Theorem~5.7]{phelps1993convex}.

\begin{theorem}[Dual Banach space characterization]
\label{thm:dual-banach-characterization}
Let \(X\) be a dual Banach space. Then \(X\) has the graphon representation
property if and only if \(X\) is weakly sequentially complete and has the
Radon--Nikodým property.
\end{theorem}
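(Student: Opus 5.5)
Necessity is already available: Proposition~\ref{prop:grp-implies-wsc} gives weak sequential completeness and Corollary~\ref{cor:grp-implies-rnp} gives the Radon--Nikod\'ym property, with no use of the dual structure. So the work is in sufficiency. Assume \(X\cong B^*\) is weakly sequentially complete with the Radon--Nikod\'ym property. By Theorem~\ref{thm:separable-descent}, it suffices to show that every closed separable subspace \(E\subset X\) has the graphon representation property. As in the lattice case, I will enlarge \(E\) to a separable subspace \(L\supset E\) that is itself isometric to a dual space. I will then apply Theorem~\ref{thm:separable-predual-sufficient} to \(L\) and descend back to \(E\) via Theorem~\ref{thm:separable-descent}.

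The key step is constructing \(L\), and I expect this to be the main obstacle. Since \(B^*\) has the Radon--Nikod\'ym property, the Namioka--Phelps--Stegall theorem makes \(B\) Asplund. Hence every closed separable subspace of \(B\) has separable dual. I will build \(L\) by a back-and-forth (closing-off) argument.

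Start with a countable dense set in \(E\). Choose a separable subspace \(B_1\subset B\) such that \(B_1\) norms \(E\) up to a factor \(1-\varepsilon_k\) on countable dense subsets. Then set \(L_1:=\) the image of \(B_1^*\) under a Hahn--Banach extension selection. Iterating this produces \(B_\infty=\overline{\bigcup B_k}\), which is separable, and the construction should yield an isometric embedding \(B_\infty^*\hookrightarrow B^*\) whose range contains \(E\).

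Concretely, I will rely on the standard Heinrich--Mankiewicz/Sims--Yost theorem. It provides, for any separable \(B_0\subset B\) and separable \(E\subset B^*\), a separable \(B_\infty\supset B_0\) admitting a Hahn--Banach extension operator \(\Theta:B_\infty^*\to B^*\). This \(\Theta\) is a linear isometric embedding with \(\langle \Theta f,b\rangle=f(b)\) for \(b\in B_\infty\), and its range \(L:=\Theta(B_\infty^*)\) can be arranged to contain \(E\). Then \(L\cong B_\infty^*\). Moreover, \(B_\infty^*\) is separable because \(B\) is Asplund, so \(L\) is separable.

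With \(L\) in hand, the remaining verifications are routine. \(L\) is a closed subspace of the weakly sequentially complete space \(X\), so it is weakly sequentially complete, since weakly Cauchy sequences in \(L\) converge weakly in \(X\) and \(L\) is weakly closed. \(L\) is also separable and isometric to the dual \(B_\infty^*\). Theorem~\ref{thm:separable-predual-sufficient} therefore shows that \(L\) has the graphon representation property. The forward direction of Theorem~\ref{thm:separable-descent}, applied to \(L\), passes the property to its closed separable subspace \(E\). The converse direction of Theorem~\ref{thm:separable-descent} then yields the property for \(X\).

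If the citation for ensuring \(E\subset\Theta(B_\infty^*)\) is delicate, I will fall back on interleaving. At each stage I add to \(B_k\) countably many norming vectors for the current countable set in \(E+\Theta_k(B_k^*)\). Since \(B_k^*\) is separable, it suffices to handle a countable dense subset. I will then argue that the limiting extension operator is compatible, which is the standard lemma in the Sims--Yost construction.
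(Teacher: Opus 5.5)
Your proposal is correct and follows essentially the same route as the paper. You use Namioka--Phelps--Stegall to make \(B\) Asplund, enclose a given closed separable subspace of \(B^*\) in a separable subspace isometric to \(V^*\) for some separable \(V\subset B\), and then apply Theorem~\ref{thm:separable-predual-sufficient} and both directions of Theorem~\ref{thm:separable-descent}; the only difference is the separable-reduction tool. The paper uses the C\'uth--Fabian rich family, whose restriction isometry \(Y_V\to V^*\) has inverse exactly your extension operator \(\Theta\), and treats separable \(B\) separately, whereas your Sims--Yost/Heinrich--Mankiewicz citation handles both cases at once, provided you cite the version in which the range of \(\Theta\) contains a prescribed separable subspace of \(B^*\).
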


\begin{proof}
The necessity follows from Proposition~\ref{prop:grp-implies-wsc} and
Corollary~\ref{cor:grp-implies-rnp}.

Conversely, suppose that \(X\) is weakly sequentially complete and has the
Radon--Nikodým property. Since \(X\) is a dual Banach space, there exists a
Banach space \(B\) such that \(X\cong B^*\). Since these properties are
invariant under isometric isomorphisms, \(B^*\) is weakly sequentially complete
and has the Radon--Nikodým property. It is therefore enough to prove that
\(B^*\) has the graphon representation property. By the
Namioka--Phelps--Stegall theorem, \(B\) is Asplund.
Suppose first that \(B\) is separable. Since \(B\) is Asplund, \(B^*\) is
separable. Thus Theorem~\ref{thm:separable-predual-sufficient} shows that
\(B^*\) has the graphon representation property.

We next consider the case where \(B\) is nonseparable. By
\cite[Theorem~6]{cuthFabian2015projections}, after restricting the family there
to its members of density \(\aleph_0\), there exist a family \(\mathcal V\) of
closed separable subspaces of \(B\) and a family
\((Y_V)_{V\in\mathcal V}\) of closed subspaces of \(B^*\) with the following
properties.
\begin{enumerate}
\item[(1)] \(B^*=\bigcup_{V\in\mathcal V}Y_V\).
\item[(2)] For any \(V_1,V_2\in\mathcal V\), there exists \(V\in\mathcal V\)
such that \(V_1\cup V_2\subset V\).
\item[(3)] If \(V_1,V_2\in\mathcal V\) and \(V_1\subset V_2\), then
\(Y_{V_1}\subset Y_{V_2}\).
\item[(4)] If \((V_n)\) is an increasing sequence in \(\mathcal V\), then
\(V:=\overline{\bigcup_{n=1}^{\infty}V_n}\in\mathcal V\) and
\(Y_V=\overline{\bigcup_{n=1}^{\infty}Y_{V_n}}\).
\item[(5)] For every \(V\in\mathcal V\), the restriction map
\(Y_V\to V^*\) is a surjective isometry.
\end{enumerate}

Let \(Y\subset B^*\) be a closed separable subspace, and choose a dense
sequence \((y_n)\) in \(Y\). For each \(n\), choose \(U_n\in\mathcal V\) such
that \(y_n\in Y_{U_n}\), using (1). By (2), we can construct an increasing
sequence \((V_n)\) in \(\mathcal V\) such that
\(U_n\subset V_n\) for every \(n\). It follows from (3) that
\(y_n\in Y_{V_n}\). Set
\(V=\overline{\bigcup_{n=1}^{\infty}V_n}\). By (4), we have
\(Y_V=\overline{\bigcup_{n=1}^{\infty}Y_{V_n}}\), and therefore
\(Y\subset Y_V\). By (5), the restriction map identifies \(Y_V\)
isometrically with \(V^*\). Since \(V\) is separable and \(B\) is Asplund,
\(V^*\) is separable, and hence so is \(Y_V\). Since \(Y_V\) is a closed subspace of
the weakly sequentially complete space \(B^*\), it is weakly sequentially
complete. Theorem~\ref{thm:separable-predual-sufficient} therefore shows that
\(Y_V\) has the graphon representation property, and
Theorem~\ref{thm:separable-descent} gives the same property for \(Y\). Since
\(Y\) was arbitrary, another application of Theorem~\ref{thm:separable-descent}
shows that \(B^*\) has the graphon representation property.
\end{proof}

\begin{remark}
A characterization of the graphon representation property for
general Banach spaces remains open. In particular, it is unknown whether there
exists a weakly sequentially complete Banach space with the Radon--Nikodým
property that does not have the graphon representation property.
\end{remark}

\section{Bounded graphon representability}
\label{sec:bounded-representation}

In this section, we first introduce the bounded graphon representation property
and show that, for every Banach space, this property is equivalent to weak
sequential completeness and the Radon--Nikodým property. The main
graph-theoretic ingredient in the proof is the use of two-labelled graphs
whose labelled vertices are nonadjacent.

\begin{definition}
\label{def:bgrp}
Let \(X\) be a Banach space. We say that \(X\) has the \emph{bounded graphon
representation property} if the following holds: if a sequence
\((\mathbf G_n)\) of \(X\)-decorated target graphs satisfies
\begin{enumerate}
\item \(\sup_n\|\mathbf G_n\|_\infty<\infty\);
\item for every \(X^*\)-decorated test graph \(\mathbf F\), the sequence
\((t(\mathbf F,\mathbf G_n))\) converges,
\end{enumerate}
then there exists a bounded \(X\)-valued Bochner graphon
\(W\in\mathcal W_{\mathrm{Boch}}(X)\) such that
\(t(\mathbf F,\mathbf G_n)\to t(\mathbf F,W)\) for every
\(X^*\)-decorated test graph \(\mathbf F\).
\end{definition}

As observed in Remark~\ref{rem:bounded-necessary-sequences}, the arguments
establishing weak sequential completeness and, in the separable case, the
Radon--Nikodým property use only uniformly \(L^\infty\)-bounded graph
sequences. Moreover, the proof of Theorem~\ref{thm:separable-descent} applies
without change to the bounded graphon representation property. We therefore
obtain the following.

\begin{proposition}
\label{prop:bounded-inherited}
The following statements hold.
\begin{enumerate}
\item If \(X\) has the bounded graphon representation property, then \(X\) is
weakly sequentially complete and has the Radon--Nikodým property.

\item A Banach space \(X\) has the bounded graphon representation property if
and only if every closed separable subspace of \(X\) has it.
\end{enumerate}
\end{proposition}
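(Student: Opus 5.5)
For part (1), I will re-run the necessity arguments of Section~\ref{sec:necessary-conditions} and check that only the bounded property is used. Weak sequential completeness follows exactly as in Proposition~\ref{prop:grp-implies-wsc}. The one-vertex looped graphs decorated by a weakly Cauchy sequence \((x_n)\) are uniformly \(L^\infty\)-bounded, by the uniform boundedness principle. Their densities converge against every \(X^*\)-decorated test graph. The bounded graphon representation property therefore supplies a (bounded) Bochner graphon \(W\). Testing against the one-edge graph \(\mathbf F_\varphi\) then shows that \(x_n\to\int W\) weakly. For the Radon--Nikod\'ym property, I first treat separable \(X\). I redo Lemma~\ref{lem:star-graph-moment-identities} with the bounded property in place of the graphon representation property. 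This is legitimate because the sequence produced by Lemma~\ref{lem:dyadic-finite-graphs-operator} is uniformly \(L^\infty\)-bounded, with \(\|a_n(ij)\|\le 3\|T\|\). The star-graph moment identities \eqref{eq:separable-rnp-star-moments} then hold verbatim. The rest of the proof of Theorem~\ref{thm:separable-grp-implies-rnp} (Lusin--Souslin, matching of moments, Pettis) uses only these identities, so it goes through unchanged. Hence every separable space with the bounded property has the Radon--Nikod\'ym property.

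For part (2), I will reproduce the proof of Theorem~\ref{thm:separable-descent} and check the two places where boundedness enters. In the downward direction, let \(Y\subset X\) be closed and separable, and let \((\mathbf G_n)\) be \(Y\)-decorated with \(\sup_n\|\mathbf G_n\|_\infty<\infty\). Viewed as \(X\)-decorated, the sequence has the same \(L^\infty\) bound and convergent \(X^*\)-densities, since \(t(\mathbf F,\mathbf G_n)=t(\mathbf F|_Y,\mathbf G_n)\). So the bounded property of \(X\) gives a bounded \(X\)-valued Bochner graphon \(W\). The four-cycle argument via Lemma~\ref{lem:four-cycle-zero-kernel} shows that \(W\) is \(Y\)-valued a.e. That lemma only requires \(\langle\widetilde z^*,W\rangle\in L^2\), which holds a fortiori. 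Being bounded, \(W\) is a bounded \(Y\)-valued Bochner graphon, and the Hahn--Banach extension argument transfers the density convergence to \(Y^*\)-decorated test graphs. In the upward direction, I take \(Y\) to be the closed span of the countably many decorations, which is a separable \(Y\)-decorated setting with the same \(L^\infty\) bound. The bounded property of \(Y\) gives a bounded \(Y\)-valued graphon, which is also a bounded \(X\)-valued graphon, and the restriction argument finishes.

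Finally, I complete part (1) in the nonseparable case. If \(X\) has the bounded property, part (2) gives the bounded property to every closed separable subspace \(Y\). Each such \(Y\) then has the Radon--Nikod\'ym property by the separable case above. Since the Radon--Nikod\'ym property is separably determined, as in Corollary~\ref{cor:grp-implies-rnp}, \(X\) has it. I do not expect a genuine obstacle here. The only point requiring care is confirming that every invocation of the representation property is applied to an \(L^\infty\)-bounded sequence, and that the boundedness of the resulting graphon is preserved when it is reinterpreted as \(Y\)-valued or \(X\)-valued. Both are immediate, since norms are unchanged under these identifications.
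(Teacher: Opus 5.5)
Your proposal is correct and follows essentially the same route as the paper. The paper likewise observes that the weak-sequential-completeness argument and the separable Radon--Nikod\'ym argument only ever apply the representation property to uniformly \(L^\infty\)-bounded sequences (Remark~\ref{rem:bounded-necessary-sequences}), and that the proof of Theorem~\ref{thm:separable-descent} carries over unchanged; it then reaches nonseparable \(X\) through part (2) and the separable determination of the Radon--Nikod\'ym property, exactly as in Corollary~\ref{cor:grp-implies-rnp}.
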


The following is the main result of this section. We state it now and then
establish the preparatory results needed for the sufficiency.

\begin{theorem}[Bounded characterization]
\label{thm:bounded-characterization}
Let \(X\) be a Banach space. Then \(X\) has the bounded graphon representation
property if and only if \(X\) is weakly sequentially complete and has the
Radon--Nikodým property.
\end{theorem}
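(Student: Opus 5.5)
The plan is as follows. Necessity is Proposition~\ref{prop:bounded-inherited}(1), so only sufficiency needs work. Assume \(X\) is weakly sequentially complete and has the Radon--Nikod\'ym property. Both properties pass to closed subspaces, so by Proposition~\ref{prop:bounded-inherited}(2) we may assume \(X\) is separable.

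\emph{Step 1: embed into a dual with separable predual.} Choose a countable norming set \((\psi_j)\subset B_{X^*}\) and let \(B:=\overline{\operatorname{span}}\{\psi_j\}\subset X^*\), which is separable. The map \(X\to B^*\), \(x\mapsto\langle\cdot,x\rangle|_B\), is an isometric embedding. Since \(B\) separates the points of \(X\), it is weak-* dense in \(X^*\).

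Let \((\mathbf G_n)\) satisfy the hypotheses of Definition~\ref{def:bgrp}, with equal-step graphons \(W_n\) and \(\sup_n\|W_n\|_\infty\le c\). Regard the \(W_n\) as \(B^*\)-valued weak-* graphons. Their densities converge for all \(B\)-decorated test graphs, so Theorem~\ref{thm:kls-3.7} yields \(U\in\mathcal W_{\mathrm w^*}(B^*)\) with \(\|U\|_p\le c\) for all \(p<\infty\). Hence \(\|U\|_\infty\le c\), and \(t(\mathbf F,W_n)\to t(\mathbf F,U)\) for every \(B\)-decorated \(\mathbf F\).

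\emph{Step 2: a vector measure with values in \(X\).} For measurable \(S\subset[0,1]^2\), define the weak-* integral \(\nu(S):=\int_S U\in B^*\). Then \(\|\nu(S)\|\le c\lambda(S)\), so \(\nu\) is countably additive in the weak-* sense, of bounded variation, and \(\nu\ll\lambda\). The key claim is that \(\nu(S)\in X\) for a rich enough class of \(S\).

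To prove it, use two-labelled graphs whose two labelled vertices are nonadjacent. Such a graph, decorated by scalars from \(B\), defines a bounded kernel \(f_U(x,y)\) on the \(U\) side and \(f_{W_n}(x,y)\) on the \(W_n\) side. Joining the two labelled vertices by one extra edge decorated by \(\varphi\in X^*\) gives a test graph \(\mathbf F_\varphi\) with
\[
t(\mathbf F_\varphi,W_n)=\Bigl\langle\varphi,\int W_nf_{W_n}\Bigr\rangle .
\]
Nonadjacency is what makes this extra edge a genuinely new simple edge. Therefore \(a_n:=\int W_nf_{W_n}\in X\) is weakly Cauchy in \(X\), and by weak sequential completeness it has a weak limit \(a\in X\). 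Testing against \(\varphi\in B\) shows that \(a\) coincides, as an element of \(B^*\), with \(\int Uf_U\).

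By linearity and uniform limits of such kernels (products of two-labelled graphs are again two-labelled graphs, and polynomials in bounded kernels are dense), \(\int U\,h\in X\) for every bounded \(h\) measurable with respect to the \(\sigma\)-algebra \(\mathcal A\) generated by the kernels \(f_U\). By the twin-reduction theory (cf.\ Lov\'asz--Szegedy, and \cite{janson2013graphons} for pure/weakly isomorphic representatives), after a measure-preserving modification of \(U\) the kernel \(U\) becomes weak-* measurable with respect to \(\mathcal A\). The plan is then to restrict \(\nu\) to \(\mathcal A\).

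\emph{Step 3: Radon--Nikod\'ym and upgrade.} Now \(\nu|_{\mathcal A}\) is an \(X\)-valued measure of bounded variation, absolutely continuous with respect to \(\lambda\). By the Radon--Nikod\'ym property of \(X\) there is \(W\in L^\infty([0,1]^2,\mathcal A;X)\) with \(\nu(S)=\int_SW\) for \(S\in\mathcal A\). Symmetry of \(W\) follows by symmetrizing or from the symmetry of \(\nu\). For each \(b\in B\), the functions \(\langle b,W\rangle\) and \(\langle b,U\rangle\) are \(\mathcal A\)-measurable and have the same integrals over \(\mathcal A\)-sets, so \(\langle b,W\rangle=\langle b,U\rangle\) a.e. Applying this to the countable family \(\psi_j\) shows that \(W\) represents all \(B\)-decorated densities, with \(\|W\|_\infty\le c\).

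Finally, apply Lemma~\ref{lem:full-dual-upgrade} with \(D=B\), which is weak-* dense in \(X^*\), to get \(t(\mathbf F,\mathbf G_n)\to t(\mathbf F,W)\) for every \(X^*\)-decorated \(\mathbf F\).

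The main obstacle is Step 2. It requires showing that the two-labelled densities generate a \(\sigma\)-algebra that is fine enough for \(U\) to be measurable with respect to it, while using only the information carried by the densities. If the twin-reduction route proves awkward, I would instead approximate \(U\) in \(L^1\)-weak-* sense by \(\mathcal A\)-conditional expectations and show directly that those conditional expectations are \(X\)-valued by the weak-sequential-completeness argument above.
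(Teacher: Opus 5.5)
\textbf{The decisive step is unproved.} Up to the construction of \(W\), your plan matches the paper's. This covers the separable norming subspace \(B\subset X^*\), the limit \(U\) from Theorem~\ref{thm:kls-3.7} with \(\|U\|_\infty\le c\), and the weak-sequential-completeness argument that puts an \(X^*\)-decorated edge between the two nonadjacent labelled vertices. It also covers the Radon--Nikod\'ym derivative of \(\nu|_{\mathcal A}\).

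One small correction there: uniform limits of polynomials in the kernels only give functions \(\varphi\circ\Theta\) with \(\varphi\) continuous. To reach all of \(L^\infty(\mathcal A)\) you need closure under bounded a.e.\ limits. That closure does hold, because \(\|\int hU\|\le c\|h\|_{1}\) and \(J(X)\) is norm closed; this is what Lemma~\ref{lem:bounded-pointwise-closure} supplies.

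The genuine gap is your claim that \(U\) is weak-* \(\mathcal A\)-measurable. Twin reduction and pure representatives do not give this. They concern the neighbourhood \(\sigma\)-algebra generated by \(x\mapsto U(x,\cdot)\), not the \(\sigma\)-algebra generated by two-labelled homomorphism densities. Moreover, \(\mathcal A\) is defined from \(U\), so a measure-preserving modification of \(U\) changes \(\mathcal A\) along with it; it does not make \(U\) measurable for a fixed \(\sigma\)-algebra. Without the claim, Step~3 only yields \(JW=\mathbb E(U\mid\mathcal A)\) in the weak-* sense. You would still have to show that conditioning on \(\mathcal A\) leaves every \(B\)-decorated density unchanged. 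Your fallback via conditional expectations runs into exactly this obstacle and does not say how to overcome it.

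\textbf{How the paper avoids it, and how your claim can be proved.} The paper never proves \(JW=U\). It replaces the edges of \(\mathbf F\) by \(W\) one at a time. The equality \(T_r=T_{r+1}\) follows from \(\int C\langle y,W\rangle\,d\mu=\int C\langle y,U\rangle\,d\mu\) for \(C\in L^\infty(\mathcal A_U)\), applied to the partial integral \(C_r\). The \(\mathcal A_U\)-measurability of \(C_r\) is Lemma~\ref{lem:rooted-integral-measurability}, proved via the edge-replacement and coordinatewise monotone-class lemmas. Symmetry of \(W\) needs a separate flip-invariance argument for \(\mathcal A_U\).

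Your claim is nevertheless true and has a short proof, which would make your route valid and shorter than the paper's.
\begin{enumerate}
\item For \(y\in B\), put \(u:=\langle y,U\rangle\). Take the path with \(k\ge2\) edges, all decorated by \(y\), with its endpoints labelled. It is nonadjacent, and its two-labelled density is the kernel \(u^{\circ k}\) of \(T_u^k\). Here \(T_u\) is the self-adjoint Hilbert--Schmidt operator with kernel \(u\), with eigenvalues \(\lambda_i\).
\item Let \(g\) be continuous with \(g(\lambda)=1/\lambda\) for \(|\lambda|\ge\delta\) and \(|\lambda-\lambda^2g(\lambda)|\le|\lambda|\). Take polynomials \(q\) uniformly close to \(g\). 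Then \(\sum_i(\lambda_i-\lambda_i^2q(\lambda_i))^2\) can be made arbitrarily small.
\item Hence \(u\) is an \(L^2\)-limit of linear combinations of the \(u^{\circ k}\), and so it is \(\mathcal A\)-measurable.
\end{enumerate}
With this, your Step~3 gives \(JW=U\) a.e., and symmetry and equality of all \(B\)-decorated densities follow at once. As written, however, the proposal does not contain this argument, and the justification it offers for the key step does not establish it.
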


\subsection{Graphs with two labelled vertices}

Let \(Y\) be a Banach space.
A two-labelled \(Y\)-decorated test graph is a triple
\((\mathbf F;\rho_1,\rho_2)\), where \(\mathbf F=(F,f)\) is a
\(Y\)-decorated test graph and \(\rho_1,\rho_2\in V(F)\) are distinct
vertices. We regard \(\rho_1\) as the vertex with label \(1\), and
\(\rho_2\) as the vertex with label \(2\). When the labelled vertices are
understood, we simply write \(\mathbf F\).

We use the standard gluing product of labelled graphs: the product is
obtained by taking the disjoint union and identifying vertices with the
same label; see \cite[Section~4.2]{lovasz2012largeNetworks}. Since the two labels
are assigned to distinct vertices, this operation produces a simple graph.

For decorated graphs, the decoration is inherited from the original
decorated edges whenever this inheritance is unambiguous. In the
two-labelled case, the only possible ambiguity comes from the edge between
the two labelled vertices. Indeed, if both factors contain this labelled
edge, then after gluing there is only one such edge, but there are two
possible decorations for it. We avoid this ambiguity by using the following
class of labelled graphs.

We say that a two-labelled \(Y\)-decorated test graph
\((\mathbf F;\rho_1,\rho_2)\) is \emph{nonadjacent} if
\(\rho_1\rho_2\notin E(F)\).
If \((\mathbf F;\rho_1,\rho_2)\) and \((\mathbf G;\eta_1,\eta_2)\) are
nonadjacent two-labelled \(Y\)-decorated test graphs, then their gluing product
has a canonically inherited decoration and is again nonadjacent. We denote this
product by \((\mathbf F;\rho_1,\rho_2)\vee(\mathbf G;\eta_1,\eta_2)\), or
simply by \(\mathbf F\vee\mathbf G\) when the labels are understood; see
Figure~\ref{fig:rooted-gluing}.

\begin{figure}
\centering
\resizebox{0.85\textwidth}{!}{%
\begin{tikzpicture}[
  x=1cm,y=1cm,
  ordinary/.style={
    circle,
    draw,
    thick,
    fill=white,
    inner sep=1pt,
    minimum size=4.8mm
  },
  root/.style={
    circle,
    draw,
    thick,
    fill=black!10,
    inner sep=1pt,
    minimum size=5.8mm
  },
  Aedge/.style={thick},
  Cedge/.style={thick},
  arrow/.style={-{Latex[length=3.2mm]}, thick},
  lab/.style={font=\small, inner sep=0.5pt},
  title/.style={font=\large},
  every node/.style={font=\small}
]

\begin{scope}[shift={(0,0)}]
\node[title] at (1.15,1.56) {$\mathbf F$};

  \node[ordinary] (aT) at (0.39,0.83) {};
  \node[ordinary] (aM) at (0.53,0.04) {};
  \node[ordinary] (aB) at (0.39,-0.79) {};

  \node[root] (arT) at (1.91,0.54) {$1$};
  \node[root] (arB) at (1.91,-0.54) {$2$};

  \draw[Aedge] (aT) -- node[lab,pos=0.43,above=1.2mm] {$\phi_1$} (arT);
  \draw[Aedge] (aM) -- node[lab,pos=0.33,above=0.6mm] {$\phi_2$} (arT);
  \draw[Aedge] (aM) -- node[lab,pos=0.61,above=0.6mm] {$\phi_3$} (arB);
  \draw[Aedge] (aB) -- node[lab,pos=0.42,below=1.0mm] {$\phi_4$} (arB);
  \draw[Aedge] (aT) -- node[lab,pos=0.43,left=1.0mm] {$\phi_5$} (aM);
\end{scope}

\node[title] at (2.89,0) {$\vee$};

\begin{scope}[shift={(3.45,0)}]
\node[title] at (1.25,1.56) {$\mathbf G$};

  \node[root] (crT) at (0.42,0.54) {$1$};
  \node[root] (crB) at (0.42,-0.54) {$2$};

  \node[ordinary] (cT) at (1.98,0.83) {};
  \node[ordinary] (cM) at (2.11,0.07) {};
  \node[ordinary] (cB) at (1.98,-0.83) {};

  \draw[Cedge] (crT) -- node[lab,pos=0.50,above=1.0mm] {$\psi_1$} (cT);
  \draw[Cedge] (crT) -- node[lab,pos=0.74,above=0.6mm] {$\psi_2$} (cM);
  \draw[Cedge] (crB) -- node[lab,pos=0.44,above=0.7mm] {$\psi_3$} (cM);
  \draw[Cedge] (crB) -- node[lab,pos=0.47,below=1.0mm] {$\psi_4$} (cB);
  \draw[Cedge] (cM) -- node[lab,pos=0.46,right=1.0mm] {$\psi_5$} (cB);
\end{scope}

\draw[arrow] (6.60,0) -- (7.30,0);

\begin{scope}[shift={(7.55,0)}]
\node[title] at (2.36,1.56) {$\mathbf F\vee\mathbf G$};

  \node[ordinary] (gAT) at (0.79,0.83) {};
  \node[ordinary] (gAM) at (0.92,0.04) {};
  \node[ordinary] (gAB) at (0.79,-0.79) {};

  \node[root] (grT) at (2.30,0.54) {$1$};
  \node[root] (grB) at (2.30,-0.54) {$2$};

  \node[ordinary] (gCT) at (3.86,0.83) {};
  \node[ordinary] (gCM) at (3.99,0.07) {};
  \node[ordinary] (gCB) at (3.86,-0.83) {};

  \draw[Aedge] (gAT) -- node[lab,pos=0.43,above=1.2mm] {$\phi_1$} (grT);
  \draw[Aedge] (gAM) -- node[lab,pos=0.33,above=0.6mm] {$\phi_2$} (grT);
  \draw[Aedge] (gAM) -- node[lab,pos=0.61,above=0.6mm] {$\phi_3$} (grB);
  \draw[Aedge] (gAB) -- node[lab,pos=0.42,below=1.0mm] {$\phi_4$} (grB);
  \draw[Aedge] (gAT) -- node[lab,pos=0.43,left=1.0mm] {$\phi_5$} (gAM);

  \draw[Cedge] (grT) -- node[lab,pos=0.50,above=1.0mm] {$\psi_1$} (gCT);
  \draw[Cedge] (grT) -- node[lab,pos=0.74,above=0.6mm] {$\psi_2$} (gCM);
  \draw[Cedge] (grB) -- node[lab,pos=0.44,above=0.7mm] {$\psi_3$} (gCM);
  \draw[Cedge] (grB) -- node[lab,pos=0.47,below=1.0mm] {$\psi_4$} (gCB);
  \draw[Cedge] (gCM) -- node[lab,pos=0.46,right=1.0mm] {$\psi_5$} (gCB);
\end{scope}

\end{tikzpicture}%
}
\caption{Construction of \(\mathbf F\vee\mathbf G\). The labelled vertices
\(1\) and \(2\) are identified. Edges with decorations \(\phi_i\in Y_0\) are
inherited from \(\mathbf F\), and edges with decorations \(\psi_j\in Y_0\) are
inherited from \(\mathbf G\).}
\label{fig:rooted-gluing}
\end{figure}

Assume now that \(Y\) is separable, and let
\(U:[0,1]^2\to Y^*\) be a bounded weak-* graphon. For a two-labelled
\(Y\)-decorated test graph
\((\mathbf F;\rho_1,\rho_2)\), define
\(\tau_{\mathbf F}^U\in L^\infty([0,1]^2)\) by
\[
\tau_{\mathbf F}^U(x_{\rho_1},x_{\rho_2})
:=
\int_{[0,1]^{V(F)\setminus\{\rho_1,\rho_2\}}}
\prod_{ij\in E(F)}
\langle f_{ij},U(x_i,x_j)\rangle
\prod_{i\in V(F)\setminus\{\rho_1,\rho_2\}} dx_i.
\]
Since \(U\) is bounded and weak-* measurable, this function is well-defined
a.e., bounded, and measurable. If \(\mathbf F\) consists only of the two labelled
vertices and has no edge, then we use the convention \(\tau_{\mathbf F}^U=1\).

Let \(Y_0\subset Y\). We write \(\mathscr R_U(Y_0)\) for the linear span of
the functions \(\tau_{\mathbf F}^U\), where
\((\mathbf F;\rho_1,\rho_2)\) ranges over all nonadjacent two-labelled
\(Y_0\)-decorated test graphs.

\begin{lemma}
Let \(Y\) be a separable Banach space, let \(Y_0\subset Y\) be countable, and let
\(U:[0,1]^2\to Y^*\) be a bounded weak-* graphon. Then
\(\mathscr R_U(Y_0)\) is a countably generated unital subalgebra of
\(L^\infty([0,1]^2)\).
\end{lemma}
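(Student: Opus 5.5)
The plan is to check the three claims separately: that \(\mathscr R_U(Y_0)\) contains the constant \(1\), that it is closed under multiplication, and that it is countably generated. Linearity is built into the definition as a span.

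Unitality is immediate from the convention. The edgeless two-labelled graph on the two labelled vertices is vacuously nonadjacent and \(Y_0\)-decorated, and its function is \(\tau^U=1\).

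For multiplicativity it suffices, by bilinearity of multiplication, to treat products of generators. The key identity is
\[
\tau_{\mathbf F\vee\mathbf G}^U=\tau_{\mathbf F}^U\,\tau_{\mathbf G}^U\quad\text{a.e. on }[0,1]^2
\]
for nonadjacent two-labelled \(Y_0\)-decorated \(\mathbf F,\mathbf G\). In the gluing product, the unlabelled vertex set is the disjoint union of the unlabelled vertex sets of \(\mathbf F\) and \(\mathbf G\). The edge set is the disjoint union of \(E(F)\) and \(E(G)\): no edge is merged, because neither factor contains the edge \(\rho_1\rho_2\) (this is exactly where nonadjacency is used). The inherited decorations still lie in \(Y_0\), and the product is again nonadjacent, as noted before the statement. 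Fix the labelled coordinates \((x_1,x_2)\). The integrand for \(\mathbf F\vee\mathbf G\) then factors as a function of the \(\mathbf F\)-unlabelled variables times a function of the \(\mathbf G\)-unlabelled variables. Fubini's theorem gives the product formula. Fubini applies because \(U\) is bounded and weak-* measurable: each factor \(\langle f_{ij},U(x_i,x_j)\rangle\) is a bounded measurable function, so the integrand is bounded and measurable on a finite-measure product space. Hence the generators are closed under products, and so is their span.

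For countable generation, observe that \(Y_0\) is countable. There are countably many finite simple graphs with labelled vertex pairs, and for each such graph countably many \(Y_0\)-decorations. Hence the generating family \(\{\tau_{\mathbf F}^U\}\) is countable. Its linear span, which is already an algebra by the previous step, is therefore countably generated; even as a vector space it has a countable spanning set.

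I do not expect a real obstacle. The only care needed is in the product formula. One must handle the a.e. definedness of \(\tau^U\), fixing representatives via Fubini on the full integrand. One must also treat the degenerate case where a factor is the edgeless graph, where the convention \(\tau=1\) is consistent with the formula.
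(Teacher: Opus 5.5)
Your proposal is correct and follows essentially the same route as the paper. Both arguments get unitality from the edgeless two-labelled graph and closure under multiplication from the identity \(\tau_{\mathbf F\vee\mathbf G}^U=\tau_{\mathbf F}^U\tau_{\mathbf G}^U\), which holds by Fubini because nonadjacency makes the edge sets combine disjointly. Countable generation then follows by counting the \(Y_0\)-decorated graphs.
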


\begin{proof}
We first prove closure under products of the generators
\((\tau_{\mathbf F}^U)_{\mathbf F}\). Let
\(\mathbf F=(F,f)\) and \(\mathbf G=(G,g)\) be nonadjacent two-labelled
\(Y_0\)-decorated test graphs. Write \(V(F)=\{1,\ldots,p\}\) and
\(V(G)=\{1,\ldots,q\}\), and assume without loss of generality that the
labelled vertices are \(1,2\) in both graphs. Let
\(\mathbf F\vee\mathbf G\) be the gluing product defined above. By the
preceding discussion, \(\mathbf F\vee\mathbf G\) is again a nonadjacent
two-labelled \(Y_0\)-decorated test graph.

Use \(x_1,x_2\) for the common labelled variables, use
\(y_3,\ldots,y_p\) for the non-labelled vertices of \(\mathbf F\), and use
\(z_3,\ldots,z_q\) for the non-labelled vertices of \(\mathbf G\). Put
\[
P_F(x_1,x_2,y)
:=
\prod_{\substack{3\le i\le p\\ 1i\in E(F)}}
\langle f_{1i},U(x_1,y_i)\rangle
\prod_{\substack{3\le i\le p\\ 2i\in E(F)}}
\langle f_{2i},U(x_2,y_i)\rangle
\prod_{\substack{3\le i<j\le p\\ ij\in E(F)}}
\langle f_{ij},U(y_i,y_j)\rangle
\]
and similarly
\[
P_G(x_1,x_2,z)
:=
\prod_{\substack{3\le i\le q\\ 1i\in E(G)}}
\langle g_{1i},U(x_1,z_i)\rangle
\prod_{\substack{3\le i\le q\\ 2i\in E(G)}}
\langle g_{2i},U(x_2,z_i)\rangle
\prod_{\substack{3\le i<j\le q\\ ij\in E(G)}}
\langle g_{ij},U(z_i,z_j)\rangle .
\]
Empty products and integrals over \([0,1]^0\) are interpreted as \(1\).
Then, for a.e. \((x_1,x_2)\in[0,1]^2\),
\[
\tau_{\mathbf F}^U(x_1,x_2)
=\int_{[0,1]^{p-2}}P_F(x_1,x_2,y)\,dy
\]
and
\[
\tau_{\mathbf G}^U(x_1,x_2)
=\int_{[0,1]^{q-2}}P_G(x_1,x_2,z)\,dz .
\]
Since \(U\) is bounded and the graphs are finite, the two integrands are
bounded. Fubini's theorem gives, for a.e. \((x_1,x_2)\in[0,1]^2\),
\[
\begin{aligned}
\tau_{\mathbf F}^U(x_1,x_2)\tau_{\mathbf G}^U(x_1,x_2)
&=
\int_{[0,1]^{p-2}}\int_{[0,1]^{q-2}}
P_F(x_1,x_2,y)P_G(x_1,x_2,z)\,dz\,dy .
\end{aligned}
\]
By the definition of \(\mathbf F\vee\mathbf G\), the integrand for
\(\tau_{\mathbf F\vee\mathbf G}^U\) is exactly
\[
P_F(x_1,x_2,y)P_G(x_1,x_2,z).
\]
Therefore
\(\tau_{\mathbf F}^U\tau_{\mathbf G}^U
=\tau_{\mathbf F\vee\mathbf G}^U\) a.e. on \([0,1]^2\).
Thus products of generators are again generators.

If \(r=\sum_i a_i\tau_{\mathbf F_i}^U\) and
\(s=\sum_j b_j\tau_{\mathbf G_j}^U\), then
\(rs=\sum_{i,j}a_i b_j\,\tau_{\mathbf F_i\vee\mathbf G_j}^U
\in \mathscr R_U(Y_0)\).
Hence \(\mathscr R_U(Y_0)\) is closed under multiplication. It is closed under
addition and scalar multiplication by definition. For the graph \(\mathbf F\)
with only the two labelled vertices and no edge,
\(\tau_{\mathbf F}^U=1\) by convention. Hence \(\mathscr R_U(Y_0)\) is
unital.

It remains to prove countable generation. Since \(Y_0\) is countable, for each
fixed number \(k\ge2\) of vertices there are only countably many
\(Y_0\)-decorated graphs. Hence there are only countably many nonadjacent
two-labelled \(Y_0\)-decorated test graphs with \(k\) vertices, and countably
many in total after taking the union over \(k\ge2\). Therefore the generators
\((\tau_{\mathbf F}^U)_{\mathbf F}\) form a countable family. This proves
that \(\mathscr R_U(Y_0)\) is a countably generated unital subalgebra of
\(L^\infty([0,1]^2)\).
\end{proof}

\begin{lemma}
Let \(Y\) be a separable Banach space, let \(Y_0\subset Y\), and let
\(U:[0,1]^2\to Y^*\) be a bounded weak-* graphon.

Let \((\mathbf H;\rho_1,\rho_2)\) be a nonadjacent two-labelled
\(Y_0\)-decorated test graph, and write \(\mathbf H=(H,h)\). Enumerate the
edges as \(E(H)=\{e_1,\ldots,e_N\}\). Write \(e_j=a_jb_j\) and
\(h_j:=h_{e_j}\).
Let \(0\le r\le N\). For each \(i=1,\ldots,r\), let
\((\mathbf K_i;\alpha_i,\beta_i)\) be a nonadjacent two-labelled
\(Y_0\)-decorated test graph.

Define, for a.e.
\((x_{\rho_1},x_{\rho_2})\in[0,1]^2\),
\[
\begin{aligned}
C_r(x_{\rho_1},x_{\rho_2})
:=
\int_{[0,1]^{V(H)\setminus\{\rho_1,\rho_2\}}}
&
\left(
\prod_{i=1}^{r}
\tau_{\mathbf K_i}^U(x_{a_i},x_{b_i})
\right) \\
&\times
\left(
\prod_{j=r+1}^{N}
\langle h_j,U(x_{a_j},x_{b_j})\rangle
\right)
\prod_{i\in V(H)\setminus\{\rho_1,\rho_2\}} dx_i.
\end{aligned}
\]
Then \(C_r\in \mathscr R_U(Y_0)\).\footnote{Here the first product in the
integrand is \(1\) if \(r=0\), and the second product is \(1\) if \(r=N\).}
\end{lemma}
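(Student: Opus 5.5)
The key observation is that \(C_r\) is itself of the form \(\tau^U_{\mathbf M}\) for a single nonadjacent two-labelled \(Y_0\)-decorated test graph \(\mathbf M\), obtained from \(\mathbf H\) by \emph{edge substitution}. We replace each edge \(e_i=a_ib_i\), \(i\le r\), by a fresh copy of \(\mathbf K_i\), gluing the labelled vertex \(\alpha_i\) to \(a_i\) and \(\beta_i\) to \(b_i\). The plan is to construct \(\mathbf M\), verify via Fubini that \(\tau^U_{\mathbf M}=C_r\) a.e., and conclude that \(C_r\) is a generator of \(\mathscr R_U(Y_0)\). We could also argue by induction on \(r\), substituting one edge at a time, but the direct construction seems cleaner.

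\textbf{Construction of \(\mathbf M\).} The vertex set of \(\mathbf M\) is \(V(H)\) together with, for each \(i\le r\), a disjoint copy of \(V(K_i)\setminus\{\alpha_i,\beta_i\}\). The edge set consists of the edges \(e_{r+1},\dots,e_N\) of \(H\) with their decorations \(h_j\), together with, for each \(i\le r\), the edges of \(\mathbf K_i\) transported via the identification \(\alpha_i\mapsto a_i\), \(\beta_i\mapsto b_i\) (internal vertices go to their copies), with the decorations of \(\mathbf K_i\). The labels are \(\rho_1,\rho_2\). We must check three things.
\begin{enumerate}
\item[(a)] \(\mathbf M\) is a simple loopless graph. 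This holds because \(a_i\ne b_i\). Moreover, since \(\alpha_i\beta_i\notin E(K_i)\), no copy of \(\mathbf K_i\) contributes an edge between \(a_i\) and \(b_i\). Hence an edge of \(\mathbf M\) between two vertices of \(V(H)\) can only be one of \(e_{r+1},\dots,e_N\), each occurring once. Every other new edge has at least one endpoint in a fresh copy, so no edge is duplicated and the decoration is unambiguous. This is exactly where nonadjacency of the \(\mathbf K_i\) is used.
\item[(b)] \(\mathbf M\) is nonadjacent. The edge \(\rho_1\rho_2\) is not among \(e_{r+1},\dots,e_N\), since \(\mathbf H\) is nonadjacent, and by (a) it is not contributed by any \(\mathbf K_i\).
\item[(c)] \(\mathbf M\) is \(Y_0\)-decorated. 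This is clear from the construction.
\end{enumerate}

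\textbf{Density computation.} We write the integral defining \(\tau^U_{\mathbf M}(x_{\rho_1},x_{\rho_2})\) over the non-labelled vertices, grouping them as \(V(H)\setminus\{\rho_1,\rho_2\}\) and the internal variables of each copy of \(\mathbf K_i\). The integrand is bounded, since \(U\) is bounded and the graph is finite, so Fubini applies. With the \(V(H)\)-variables fixed, the integrand factorizes: one factor for each \(j>r\), and one factor for each copy of \(\mathbf K_i\) depending only on that copy's internal variables and on \(x_{a_i},x_{b_i}\). Integrating out the internal variables of the \(i\)-th copy gives exactly \(\tau^U_{\mathbf K_i}(x_{a_i},x_{b_i})\). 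What remains is the integrand of \(C_r\), so \(C_r=\tau^U_{\mathbf M}\) a.e. The degenerate cases are consistent. If \(r=0\), then \(\mathbf M=\mathbf H\). If some \(\mathbf K_i\) is edgeless on its two labelled vertices, the substitution simply deletes \(e_i\), matching the convention \(\tau^U_{\mathbf K_i}=1\).

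\textbf{Main obstacle.} The only delicate point is measure-theoretic. The function \(\tau^U_{\mathbf K_i}\) is defined only a.e. on \([0,1]^2\), so we must make sure that composing it with \((x_{a_i},x_{b_i})\) inside the \(|V(H)|\)-dimensional integral is legitimate. This holds because the coordinate projection \([0,1]^{V(H)}\to[0,1]^2\) onto \((x_{a_i},x_{b_i})\) pulls null sets back to null sets. Consequently the Fubini identity above holds off a null set, and the two sides agree a.e. as functions of \((x_{\rho_1},x_{\rho_2})\).
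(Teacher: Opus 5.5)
Your proposal is correct and takes essentially the same approach as the paper: it builds the same edge-substituted graph \(\mathbf H^{(r)}\) (delete \(e_1,\dots,e_r\) and glue in copies of \(\mathbf K_i\) with \(\alpha_i\mapsto a_i\), \(\beta_i\mapsto b_i\)), then uses Fubini to integrate out each inserted copy's internal variables, which gives \(C_r=\tau^U_{\mathbf H^{(r)}}\) a.e. Your explicit checks that the nonadjacency of the \(\mathbf K_i\) rules out duplicate or ambiguous edges, and that composing the a.e.-defined \(\tau^U_{\mathbf K_i}\) with coordinate projections is legitimate, fill in details the paper covers only with \emph{by construction} and \emph{for a.e.}\ \((x_{a_i},x_{b_i})\).
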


\begin{proof}
Write \(\mathbf H=(H,h)\) and \(\mathbf K_i=(K_i,k^i)\). If \(r=0\), then
\(C_0=\tau_{\mathbf H}^U\), so the assertion is clear. We may therefore assume
that \(r\ge1\).

We define a two-labelled \(Y_0\)-decorated test graph
\(\mathbf H^{(r)}\) as follows. Starting from \(\mathbf H\), delete the
original edges \(e_1,\ldots,e_r\). For each \(i=1,\ldots,r\), glue
\(\mathbf K_i\) in the place of \(e_i\), identifying \(\alpha_i\) with
\(a_i\), and \(\beta_i\) with \(b_i\). The two labels of
\(\mathbf H^{(r)}\) are inherited from \(\mathbf H\). Decorations are
inherited from the remaining edges of \(\mathbf H\) and from the edges of the
inserted graphs \(\mathbf K_i\).
By construction, \(\mathbf H^{(r)}\) is a nonadjacent two-labelled
\(Y_0\)-decorated test graph.
Figure~\ref{fig:finite-edge-replacement} illustrates the construction of
\(\mathbf H^{(r)}\).

\begin{figure}[t]
\centering
\begin{tikzpicture}[
  x=1cm,
  y=1cm,
  every node/.style={font=\small},
  v/.style={circle,draw,fill=white,inner sep=1.2pt,minimum size=5pt},
  named/.style={circle,draw,fill=white,inner sep=0pt,minimum size=19pt,
    font=\scriptsize},
  root/.style={named,fill=black!10},
  edge/.style={line width=0.45pt},
  markedge/.style={line width=0.8pt,dashed},
  inserted/.style={line width=0.8pt},
  arrow/.style={-{Latex[length=2mm]},line width=0.45pt}
]

\begin{scope}
\node[root] (r1) at (0.15,0) {\(\rho_1\)};
\node[root] (r2) at (4.45,0) {\(\rho_2\)};
\node[named] (a1) at (1.25,0.8) {\(a_1\)};
\node[named] (b1) at (3.35,0.8) {\(b_1\)};
\node[named] (a2) at (1.25,-0.8) {\(a_2\)};
\node[named] (b2) at (3.35,-0.8) {\(b_2\)};

\draw[edge] (r1)--(a1);
\draw[markedge] (a1)--(b1) node[midway,below] {\(e_1\)};
\draw[edge] (b1)--(r2);
\draw[edge] (r2)--(b2);
\draw[markedge] (b2)--(a2) node[midway,above] {\(e_2\)};
\draw[edge] (a2)--(r1);
\node at (2.3,0) {\(\mathbf H\)};

\node[named] (al1) at (1.25,1.75) {\(\alpha_1\)};
\node[named] (be1) at (3.35,1.75) {\(\beta_1\)};
\node[v] (u11) at (1.8,2.35) {};
\node[v] (u12) at (2.85,2.35) {};
\node[v] (u13) at (2.2,2.05) {};
\draw[inserted] (al1)--(u11)--(u12)--(be1);
\draw[inserted] (al1)--(u13)--(u12);
\node at (2.3,2.75) {\(\mathbf K_1\)};

\node[named] (al2) at (1.25,-1.75) {\(\alpha_2\)};
\node[named] (be2) at (3.35,-1.75) {\(\beta_2\)};
\node[v] (u21) at (1.9,-2.35) {};
\node[v] (u22) at (2.7,-2.35) {};
\draw[inserted] (al2)--(u21)--(u22)--(be2);
\draw[inserted] (al2)--(u22);
\node at (2.3,-2.75) {\(\mathbf K_2\)};
\end{scope}

\draw[arrow] (4.95,0)--(5.85,0);
\node[align=center,font=\scriptsize] at (5.4,0.52)
  {delete \(e_1,e_2\)\\and glue};

\begin{scope}[shift={(6.25,0)}]
\node[root] (R1) at (0.15,0) {\(\rho_1\)};
\node[root] (R2) at (4.45,0) {\(\rho_2\)};
\node[named] (A1) at (1.25,0.8) {\(a_1\)};
\node[named] (B1) at (3.35,0.8) {\(b_1\)};
\node[named] (A2) at (1.25,-0.8) {\(a_2\)};
\node[named] (B2) at (3.35,-0.8) {\(b_2\)};

\draw[edge] (R1)--(A1);
\draw[edge] (B1)--(R2);
\draw[edge] (R2)--(B2);
\draw[edge] (A2)--(R1);
\node at (2.3,0) {\(\mathbf H^{(2)}\)};

\node[v] (U11) at (1.8,1.55) {};
\node[v] (U12) at (2.85,1.55) {};
\node[v] (U13) at (2.2,1.15) {};
\draw[inserted] (A1)--(U11)--(U12)--(B1);
\draw[inserted] (A1)--(U13)--(U12);
\node at (2.3,1.95) {\(\mathbf K_1\)};

\node[v] (U21) at (1.9,-1.55) {};
\node[v] (U22) at (2.7,-1.55) {};
\draw[inserted] (A2)--(U21)--(U22)--(B2);
\draw[inserted] (A2)--(U22);
\node at (2.3,-1.95) {\(\mathbf K_2\)};
\end{scope}

\end{tikzpicture}
\caption{Construction of \(\mathbf H^{(2)}\). The left panel shows
\(\mathbf H\) together with the graphs \(\mathbf K_1\) and \(\mathbf K_2\).
The marked edges \(e_i=a_i b_i\) are deleted, and
\(\mathbf K_i\) is glued in their place by identifying \(\alpha_i\) with
\(a_i\) and \(\beta_i\) with \(b_i\). The right panel shows the resulting
graph \(\mathbf H^{(2)}\); the thicker edges belong to the inserted graphs,
and all edge decorations are inherited from the original graphs.}
\label{fig:finite-edge-replacement}
\end{figure}

For each \(i=1,\ldots,r\), put
\(I_i:=V(K_i)\setminus\{\alpha_i,\beta_i\}\). We assign variables \(x_v\) to
the vertices \(v\in V(H)\), and variables \(w_u^i\) to the vertices
\(u\in I_i\). For each \(i\), since
\(\mathbf K_i\) is nonadjacent, there is no edge between \(\alpha_i\) and
\(\beta_i\). Thus the edges of \(K_i\) are partitioned into
\(E^i_{II}:=\{mn\in E(K_i):m,n\in I_i\}\),
\(E^i_{\alpha I}:=\{\alpha_i u\in E(K_i):u\in I_i\}\), and
\(E^i_{\beta I}:=\{\beta_i u\in E(K_i):u\in I_i\}\).
For \(x_{a_i},x_{b_i}\in[0,1]\) and
\(w^i=(w_u^i)_{u\in I_i}\in[0,1]^{I_i}\), define
\[
\begin{aligned}
A_i(x_{a_i},x_{b_i},w^i)
&:=
\left(\prod_{mn\in E^i_{II}}
\langle k^i_{mn},U(w_m^i,w_n^i)\rangle\right)
\left(\prod_{\alpha_i u\in E^i_{\alpha I}}
\langle k^i_{\alpha_i u},U(x_{a_i},w_u^i)\rangle\right)\\
&\quad\times
\left(\prod_{\beta_i u\in E^i_{\beta I}}
\langle k^i_{\beta_i u},U(x_{b_i},w_u^i)\rangle\right).
\end{aligned}
\]
Empty products are interpreted as \(1\).

By the definition of \(\tau_{\mathbf H^{(r)}}^U\), for a.e.
\((x_{\rho_1},x_{\rho_2})\in[0,1]^2\),
\[
\begin{aligned}
\tau_{\mathbf H^{(r)}}^U(x_{\rho_1},x_{\rho_2})
&=
\int_{[0,1]^{V(H)\setminus\{\rho_1,\rho_2\}}}
\int_{\prod_{i=1}^r[0,1]^{I_i}}
\left(
\prod_{j=r+1}^{N}
\langle h_j,U(x_{a_j},x_{b_j})\rangle
\right)\\
&\quad\times
\left(
\prod_{i=1}^{r}
A_i(x_{a_i},x_{b_i},w^i)
\right)
\,dw^1\cdots dw^r
\prod_{v\in V(H)\setminus\{\rho_1,\rho_2\}} dx_v.
\end{aligned}
\]
The graphon \(U\) is bounded and all graphs are finite, so all integrands are
bounded. Hence Fubini's theorem applies. For each \(i\), the definition of
\(\tau_{\mathbf K_i}^U\) gives
\[
\int_{[0,1]^{I_i}}A_i(x_{a_i},x_{b_i},w^i)\,dw^i
=\tau_{\mathbf K_i}^U(x_{a_i},x_{b_i})
\]
for a.e. \((x_{a_i},x_{b_i})\). Therefore we may integrate the internal
variables of the inserted graphs successively and obtain, for a.e.
\((x_{\rho_1},x_{\rho_2})\in[0,1]^2\),
\[
\begin{aligned}
\tau_{\mathbf H^{(r)}}^U(x_{\rho_1},x_{\rho_2})
&=
\int_{[0,1]^{V(H)\setminus\{\rho_1,\rho_2\}}}
\left(
\prod_{i=1}^{r}
\tau_{\mathbf K_i}^U(x_{a_i},x_{b_i})
\right)\\
&\quad\times
\left(
\prod_{j=r+1}^{N}
\langle h_j,U(x_{a_j},x_{b_j})\rangle
\right)
\prod_{i\in V(H)\setminus\{\rho_1,\rho_2\}} dx_i.
\end{aligned}
\]
The right-hand side is \(C_r(x_{\rho_1},x_{\rho_2})\) by definition. Hence
\(C_r=\tau_{\mathbf H^{(r)}}^U\) a.e. on \([0,1]^2\).
Since \(\mathbf H^{(r)}\) is a nonadjacent two-labelled
\(Y_0\)-decorated test graph, this gives \(C_r\in\mathscr R_U(Y_0)\).
This proves the lemma.
\end{proof}

\subsection{Closure and measurability}

We first establish the following functional-analytic lemma.

\begin{lemma}\label{lem:bounded-pointwise-closure}
Let \((\Omega,\mathcal F,\mu)\) be a complete probability space. Let
\(\mathscr R\subset L^\infty(\Omega)\) be a countably generated unital
subalgebra, and put \(\mathcal A:=\sigma(\mathscr R)\). Let
\(\mathcal D\subset L^\infty(\mathcal A)\) be a vector subspace which
contains \(\mathscr R\) and is closed under bounded pointwise a.e.
limits.\footnote{Whenever \(f_n\in\mathcal D\),
\(\sup_n\|f_n\|_\infty<\infty\), and \(f_n\to f\) pointwise a.e., with
\(f\in L^\infty(\mathcal A)\), then \(f\in\mathcal D\).}
Then \(\mathcal D=L^\infty(\mathcal A)\).
\end{lemma}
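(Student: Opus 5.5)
This is a functional monotone class argument: first get bounded continuous functions of finitely many generators into \(\mathcal D\), then indicators of generating sets, then everything via simple functions. Fix a countable generating family \(r_1,r_2,\dots\) of \(\mathscr R\) as a unital algebra, and choose bounded representatives with \(|r_j|\le M_j\). Then \(\mathcal A=\sigma(r_1,r_2,\dots)\) up to null sets. This uses the completeness of \(\mu\), and \(L^\infty(\mathcal A)\) is understood modulo null sets.

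\textbf{Step 1 (continuous functions).} Fix \(d\) and a continuous \(\phi\) on \(K_d:=\prod_{j\le d}[-M_j,M_j]\). By Stone--Weierstrass, choose polynomials \(p_k\to\phi\) uniformly on \(K_d\). Each \(p_k(r_1,\dots,r_d)\) lies in \(\mathscr R\subset\mathcal D\), because \(\mathscr R\) is a unital algebra. These functions are uniformly bounded and converge everywhere to \(\phi(r_1,\dots,r_d)\). By the closure hypothesis, \(\phi(r_1,\dots,r_d)\in\mathcal D\).

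\textbf{Step 2 (indicators).} Let \(\mathcal C\) be the collection of sets \(\{(r_1,\dots,r_d)\in O\}\), where \(d\ge1\) and \(O\subset\R^d\) is open. This is a \(\pi\)-system generating \(\mathcal A\) (pad \(O\) with extra coordinates to intersect). For such a set, write \(1_O=\lim_k \phi_k\) as an increasing limit of continuous functions with \(0\le\phi_k\le1\), for instance \(\phi_k(t)=\min(1,k\,\mathrm{dist}(t,O^c))\). Step 1 and closure under bounded limits then give \(1_C\in\mathcal D\) for every \(C\in\mathcal C\).

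Next let \(\mathcal L:=\{A\in\mathcal A: 1_A\in\mathcal D\}\). This is a Dynkin system. It contains \(\Omega\), since \(1\in\mathscr R\). It is closed under complements, because \(\mathcal D\) is a vector space and \(1_{A^c}=1-1_A\). It is closed under increasing unions, by bounded pointwise convergence. Dynkin's \(\pi\)-\(\lambda\) theorem gives \(\mathcal L\supset\sigma(\mathcal C)=\mathcal A\). Null-set modifications cause no trouble, since elements of \(\mathcal D\) are a.e.\ classes.

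\textbf{Step 3 (all of \(L^\infty(\mathcal A)\)).} Simple \(\mathcal A\)-measurable functions lie in \(\mathcal D\) by linearity. Any \(f\in L^\infty(\mathcal A)\) is a uniform, hence bounded pointwise, limit of such simple functions. So \(f\in\mathcal D\), and therefore \(\mathcal D=L^\infty(\mathcal A)\).

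The only delicate point is bookkeeping. \(\mathcal A=\sigma(\mathscr R)\) is defined via representatives, so I would take it to be the completed \(\sigma\)-algebra generated by fixed representatives of the countably many generators, and then check that functions of all of \(\mathscr R\) are measurable for it. This holds because every element of \(\mathscr R\) is a polynomial in the generators, which is where countable generation is used. Beyond that the argument is the standard monotone class theorem and should pose no real obstacle.
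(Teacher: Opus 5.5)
Your proposal is correct and is essentially the paper's argument: Stone--Weierstrass for continuous functions of the generators, continuous approximation of indicators, the \(\pi\)-\(\lambda\) theorem, a null-set adjustment, and simple-function approximation; the only difference is cosmetic, since you use finite-dimensional cylinders \(\{(r_1,\dots,r_d)\in O\}\) with \(O\) open, whereas the paper maps into the compact set \(K=\overline{\Theta(\Omega)}\subset\prod_j[-C_j,C_j]\) and uses closed subsets of \(K\). One small fix: containing \(\Omega\) and being closed under complements and increasing unions does not by itself make \(\mathcal L\) a Dynkin system, so you should also record closure under proper differences (or finite disjoint unions), which follows from the same linearity, e.g.\ \(\mathbf 1_{A\setminus B}=\mathbf 1_A-\mathbf 1_B\) for \(B\subset A\).
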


Note that, since the elements of \(L^\infty(\Omega)\) are equivalence classes,
\(\sigma(\mathscr R)\) denotes the smallest sub-\(\sigma\)-algebra of
\(\mathcal F\) with respect to which every representative of every element of
\(\mathscr R\) is measurable.

\begin{proof}
Choose a countable family \(r_1,r_2,\ldots\in\mathscr R\) which generates
\(\mathscr R\) as a unital algebra. Changing these representatives on null
sets if necessary, choose constants \(C_j>0\) such that
\(|r_j(\omega)|\le C_j\) for every \(\omega\in\Omega\). Note that \(r_j\)
denotes a representative of the corresponding equivalence class. Put
\(\mathcal A_0:=\sigma(r_j:j\ge1)\).
By the definition of \(\sigma(\mathscr R)\), the \(\sigma\)-algebra
\(\mathcal A\) is generated by \(\mathcal A_0\) and all \(\mu\)-null sets.

Define \(\Theta:\Omega\to\mathbb R^{\mathbb N}\) by
\(\Theta(\omega):=(r_j(\omega))_{j\ge1}\), and set
\(K:=\overline{\Theta(\Omega)}\subset
\prod_{j=1}^\infty[-C_j,C_j]\).
Then \(K\) is compact and metrizable. Since the Borel \(\sigma\)-algebra of
\(K\) is generated by finite-coordinate cylinders, we have
\(\mathcal A_0=\sigma(r_j:j\ge1)=\Theta^{-1}(\mathcal B(K))\).

Let \(\pi_j:K\to\mathbb R\) be the \(j\)-th coordinate projection, and let
\(\mathcal P\) be the unital subalgebra of \(C(K)\) generated by the coordinate
functions \(\pi_1,\pi_2,\ldots\).
If \(P\in\mathcal P\), then \(P\circ\Theta\) is a polynomial in finitely many
of the functions \(r_j\) and the constant \(1\). Since \(\mathscr R\) is a
unital algebra, \(P\circ\Theta\in\mathscr R\subset\mathcal D\).

The algebra \(\mathcal P\) separates points of \(K\) and contains the
constants. By the Stone--Weierstrass theorem, \(\mathcal P\) is uniformly
dense in \(C(K)\). Hence, for every \(\varphi\in C(K)\), there exist
\(P_n\in\mathcal P\) such that \(P_n\to\varphi\) uniformly on \(K\). Then
\(P_n\circ\Theta\to\varphi\circ\Theta\) pointwise on \(\Omega\), and the
sequence is uniformly bounded. Since \(P_n\circ\Theta\in\mathcal D\) and
\(\mathcal D\) is closed under bounded pointwise a.e. limits, we obtain
\(\varphi\circ\Theta\in\mathcal D\) for every \(\varphi\in C(K)\).

Define \(\mathcal C:=\{B\in\mathcal B(K):
\mathbf 1_{\Theta^{-1}(B)}\in\mathcal D\}\).
We first show that \(\mathcal C\) contains all closed subsets of \(K\). The
empty set is trivial. If \(F\subset K\) is nonempty and closed, define
\(\varphi_n(x):=\max\{1-n\,d(x,F),0\}\), where \(d\) is a metric on \(K\).
Then \(\varphi_n\in C(K)\), \(0\le\varphi_n\le1\), and
\(\varphi_n\to\mathbf 1_F\) pointwise on \(K\). Therefore
\(\varphi_n\circ\Theta\to\mathbf 1_{\Theta^{-1}(F)}\) pointwise on
\(\Omega\).
Since \(\varphi_n\circ\Theta\in\mathcal D\) and the sequence is bounded, we get
\(\mathbf 1_{\Theta^{-1}(F)}\in\mathcal D\). Thus \(F\in\mathcal C\).

Next, \(\mathcal C\) is a Dynkin class. It contains \(K\), because
\(\mathbf 1_{\Theta^{-1}(K)}=1\in\mathscr R\subset\mathcal D\).
If \(A,B\in\mathcal C\) and \(B\subset A\), then
\(\mathbf 1_{\Theta^{-1}(A\setminus B)}
=\mathbf 1_{\Theta^{-1}(A)}-\mathbf 1_{\Theta^{-1}(B)}\in\mathcal D\).
Hence \(A\setminus B\in\mathcal C\). If \(B_1,B_2,\ldots\in\mathcal C\) are
pairwise disjoint, then
\(\sum_{m=1}^n\mathbf 1_{\Theta^{-1}(B_m)}\in\mathcal D\). This sequence is
bounded by \(1\) and converges pointwise to
\(\mathbf 1_{\Theta^{-1}(\bigcup_{m=1}^\infty B_m)}\). Thus
\(\bigcup_{m=1}^\infty B_m\in\mathcal C\), so \(\mathcal C\) is a Dynkin
class.

Since the closed subsets of \(K\) form a \(\pi\)-system generating
\(\mathcal B(K)\), the \(\pi\)-\(\lambda\) theorem gives
\(\mathcal C=\mathcal B(K)\). Hence
\(\mathbf 1_{\Theta^{-1}(B)}\in\mathcal D\) for every
\(B\in\mathcal B(K)\). Since
\(\mathcal A_0=\Theta^{-1}(\mathcal B(K))\), we have
\(\mathbf 1_A\in\mathcal D\) for every \(A\in\mathcal A_0\).

Now let \(A\in\mathcal A\). Since \(\mathcal A\) is generated by
\(\mathcal A_0\) and all \(\mu\)-null sets, there exists
\(A_0\in\mathcal A_0\) such that
\(\mu(A\triangle A_0)=0\). Thus \(\mathbf 1_A=\mathbf 1_{A_0}\) in
\(L^\infty(\Omega)\). Since \(\mathbf 1_{A_0}\in\mathcal D\), we also have
\(\mathbf 1_A\in\mathcal D\).
Therefore every bounded \(\mathcal A\)-measurable simple function belongs to
\(\mathcal D\).

Finally, let \(f\in L^\infty(\mathcal A)\). There exist bounded
\(\mathcal A\)-measurable simple functions \(s_n\) such that \(s_n\to f\)
pointwise a.e. and \(\sup_n\|s_n\|_\infty<\infty\).
Since each \(s_n\in\mathcal D\), the closure assumption gives
\(f\in\mathcal D\). Thus \(L^\infty(\mathcal A)\subset\mathcal D\). The
reverse inclusion is assumed. Hence \(\mathcal D=L^\infty(\mathcal A)\).
\end{proof}

We shall also use the following finite-product version of
Lemma~\ref{lem:bounded-pointwise-closure}.

\begin{lemma}
Let \((\Omega,\mathcal F,\mu)\) be a complete probability space. Let
\(\mathscr R\subset L^\infty(\Omega)\) be a countably generated unital
subalgebra, and put \(\mathcal A:=\sigma(\mathscr R)\). Let
\(m\in\mathbb N\), and let
\(\mathcal D\subset\bigl(L^\infty(\mathcal A)\bigr)^m\) satisfy the following
three conditions.

\begin{enumerate}
\item \(\mathscr R^m\subset\mathcal D\).

\item For each \(j=1,\ldots,m\), and for every fixed choice of
\(g_i\in L^\infty(\mathcal A)\) (\(i\neq j\)), the section
\(\{g\in L^\infty(\mathcal A):
(g_1,\ldots,g_{j-1},g,g_{j+1},\ldots,g_m)\in\mathcal D\}\)
is a vector subspace of \(L^\infty(\mathcal A)\).

\item For each \(j=1,\ldots,m\), the sections in \((2)\) are closed under
bounded pointwise a.e. limits.
\end{enumerate}

Then \(\mathcal D=\bigl(L^\infty(\mathcal A)\bigr)^m\).
\end{lemma}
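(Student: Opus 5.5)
The plan is induction on the number of coordinates that have already been upgraded from \(\mathscr R\) to \(L^\infty(\mathcal A)\), applying Lemma~\ref{lem:bounded-pointwise-closure} to one coordinate at a time. For \(k=0,1,\dots,m\), let \(P(k)\) be the statement
\[
\bigl(L^\infty(\mathcal A)\bigr)^k\times\mathscr R^{m-k}\subset\mathcal D .
\]
Condition (1) gives \(P(0)\). Once \(P(m)\) is established, we get \(\bigl(L^\infty(\mathcal A)\bigr)^m\subset\mathcal D\). Since \(\mathcal D\) is assumed to be a subset of \(\bigl(L^\infty(\mathcal A)\bigr)^m\), this proves equality.

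For the inductive step, assume \(P(k-1)\) for some \(1\le k\le m\). Fix \(g_1,\dots,g_{k-1}\in L^\infty(\mathcal A)\) and \(g_{k+1},\dots,g_m\in\mathscr R\). Consider the section in coordinate \(j=k\):
\[
\mathcal D_k:=\{g\in L^\infty(\mathcal A):(g_1,\dots,g_{k-1},g,g_{k+1},\dots,g_m)\in\mathcal D\}.
\]
By condition (2), \(\mathcal D_k\) is a vector subspace of \(L^\infty(\mathcal A)\). By condition (3), it is closed under bounded pointwise a.e. limits, in the sense of the footnote to Lemma~\ref{lem:bounded-pointwise-closure}: the limit is required to lie in \(L^\infty(\mathcal A)\). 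By \(P(k-1)\), every \(g\in\mathscr R\) gives a tuple lying in \(\bigl(L^\infty(\mathcal A)\bigr)^{k-1}\times\mathscr R^{m-k+1}\), so \(\mathscr R\subset\mathcal D_k\). One also needs \(\mathscr R\subset L^\infty(\mathcal A)\); this holds because \(\mathcal A=\sigma(\mathscr R)\) makes every representative of every element of \(\mathscr R\) measurable.

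All hypotheses of Lemma~\ref{lem:bounded-pointwise-closure} are then satisfied, with the same \((\Omega,\mathcal F,\mu)\), \(\mathscr R\) and \(\mathcal A\). It gives \(\mathcal D_k=L^\infty(\mathcal A)\). The fixed coordinates were arbitrary, so \(P(k)\) follows.

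I do not expect a real obstacle here. The one point needing care is the order of quantifiers. The sections must be taken with the remaining coordinates lying in \(\mathscr R\), not in \(L^\infty(\mathcal A)\), until those coordinates are reached in the induction. Otherwise the containment \(\mathscr R\subset\mathcal D_k\) would not be available from (1). The induction ordering above handles this. It also uses that conditions (2) and (3) hold for every fixed choice of the other coordinates in \(L^\infty(\mathcal A)\), which covers both the already-upgraded and the not-yet-upgraded coordinates.
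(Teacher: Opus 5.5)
Your proposal is correct and is essentially the paper's proof. You upgrade one coordinate at a time by applying Lemma~\ref{lem:bounded-pointwise-closure} to the section in which the earlier coordinates range over \(L^\infty(\mathcal A)\) and the later ones over \(\mathscr R\). Your explicit induction \(P(k)\) and the remark that \(\mathscr R\subset L^\infty(\mathcal A)\) only make precise what the paper abbreviates as ``repeating this argument.''
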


\begin{proof}
Fix \(r_2,\ldots,r_m\in\mathscr R\) and consider the first-coordinate
section \(\mathcal S:=\{g\in L^\infty(\mathcal A):
(g,r_2,\ldots,r_m)\in\mathcal D\}\). By the assumptions on
\(\mathcal D\), the section \(\mathcal S\) satisfies the hypotheses of the
preceding lemma. Hence
\(\mathcal S=L^\infty(\mathcal A)\). Thus
\(L^\infty(\mathcal A)\times\mathscr R^{m-1}\subset\mathcal D\).
Next fix \(g_1\in L^\infty(\mathcal A)\) and
\(r_3,\ldots,r_m\in\mathscr R\). Applying the same argument to the
second-coordinate section, using the inclusion just proved, gives
\(\bigl(L^\infty(\mathcal A)\bigr)^2\times\mathscr R^{m-2}
\subset\mathcal D\).
Repeating this argument for the remaining coordinates, we obtain
\(\bigl(L^\infty(\mathcal A)\bigr)^m\subset\mathcal D\).
\end{proof}

\begin{lemma}
\label{lem:rooted-integral-measurability}
Let \(Y\) be a separable Banach space, let \(Y_0\subset Y\) be countable and
norm dense, and let \(U:[0,1]^2\to Y^*\) be a bounded weak-* graphon. Put
\(\mathcal A_U:=\sigma(\mathscr R_U(Y_0))\).

Let \((\mathbf H;\rho_1,\rho_2)\) be a nonadjacent two-labelled
\(Y\)-decorated test graph, and write \(\mathbf H=(H,h)\). Enumerate the
edges as \(E(H)=\{e_1,\ldots,e_N\}\). Write \(e_j=a_jb_j\) and
\(h_j:=h_{e_j}\).

Let \(0\le r\le N\), and let
\(g_1,\ldots,g_r\in L^\infty(\mathcal A_U)\). Define, for a.e.
\((x_{\rho_1},x_{\rho_2})\in[0,1]^2\),
\[
\begin{aligned}
C_r(x_{\rho_1},x_{\rho_2})
:=
\int_{[0,1]^{V(H)\setminus\{\rho_1,\rho_2\}}}
&
\left(
\prod_{i=1}^{r}
g_i(x_{a_i},x_{b_i})
\right) \\
&\times
\left(
\prod_{j=r+1}^{N}
\langle h_j,U(x_{a_j},x_{b_j})\rangle
\right)
\prod_{i\in V(H)\setminus\{\rho_1,\rho_2\}} dx_i.
\end{aligned}
\]
Then \(C_r\in L^\infty(\mathcal A_U)\).\footnote{Again, empty products in the
integrand are interpreted as \(1\).}
\end{lemma}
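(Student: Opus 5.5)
Fix \(\mathbf H\), the edge enumeration and \(r\); we argue by induction on \(r\), following the pattern of the finite-product closure lemma. First treat the case where all decorations \(h_j\) lie in \(Y_0\). Set \(m:=N\) (or \(m:=r\)) and let \(\mathcal D\subset(L^\infty(\mathcal A_U))^r\) be the set of tuples \((g_1,\dots,g_r)\) for which the resulting \(C_r\) lies in \(L^\infty(\mathcal A_U)\). By the edge-replacement lemma, if every \(g_i=\tau^U_{\mathbf K_i}\) is a generator, then \(C_r=\tau^U_{\mathbf H^{(r)}}\in\mathscr R_U(Y_0)\). Since \(C_r\) is multilinear in \((g_1,\dots,g_r)\), this extends to all tuples in \(\mathscr R_U(Y_0)^r\). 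Hence condition (1) of the finite-product lemma holds, and condition (2) follows from multilinearity.

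For condition (3), take \(g^{(n)}\to g\) boundedly a.e. in one slot, with the other slots fixed and bounded. For a.e. \((x_{\rho_1},x_{\rho_2})\), the integrand converges a.e. in the remaining variables. The key point is that \((x_{a_i},x_{b_i})\) is a pair of distinct coordinates, so a null set in \([0,1]^2\) pulls back to a null set in \([0,1]^{V(H)}\). The integrand is also bounded, because \(U\) is bounded. Fubini and dominated convergence then give \(C_r^{(n)}\to C_r\) a.e. As a bounded a.e. limit of \(\mathcal A_U\)-measurable functions, \(C_r\) is \(\mathcal A_U\)-measurable; here we complete \(\mathcal A_U\) by null sets, which is implicit in the definition of \(L^\infty(\mathcal A_U)\). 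The finite-product lemma therefore gives \(\mathcal D=(L^\infty(\mathcal A_U))^r\). Note that the nonadjacency of \(\mathbf H\) ensures the labelled pair \((\rho_1,\rho_2)\) is never an edge argument, so that \(C_r\) really is an integral over the unlabelled vertices.

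Next remove the restriction \(h_j\in Y_0\) for \(j>r\), using the norm density of \(Y_0\). Choose \(h_j^{(n)}\in Y_0\) with \(h_j^{(n)}\to h_j\) in norm. Then
\[
\bigl|\langle h_j^{(n)}-h_j,U\rangle\bigr|\le\|h_j^{(n)}-h_j\|\,\|U\|_\infty .
\]
Since all other factors are bounded, the corresponding \(C_r^{(n)}\) converge to \(C_r\) uniformly a.e. Each \(C_r^{(n)}\) lies in \(L^\infty(\mathcal A_U)\) by the first step, and \(L^\infty(\mathcal A_U)\) is closed under such limits. This gives the general statement; the approximation can be done one edge at a time, or all edges at once via a telescoping product estimate.

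The main obstacle is checking condition (3) carefully, namely that bounded a.e. convergence in \([0,1]^2\) of one factor transfers to a.e. convergence of \(C_r\). This requires \(a_i\neq b_i\), which holds because \(H\) is loopless, together with the measure-preserving coordinate projection and Fubini. A secondary point is making sure the section structure matches the hypotheses of the finite-product lemma, since the \(g_i\) for different \(i\) may enter through edges sharing vertices. Multilinearity handles this, because the lemma only fixes the other slots.
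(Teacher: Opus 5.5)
Your proposal is correct and follows the paper's proof essentially step by step. The paper likewise handles the \(Y_0\)-decorated case via the edge-replacement lemma, multilinearity, and the coordinatewise closure lemma (applied with \(m=r\), not \(N\)), and then approximates the decorations \(h_j\), \(j>r\), in norm by elements of \(Y_0\); the only differences are cosmetic: your estimate \(|\langle h_j^{(n)}-h_j,U\rangle|\le\|h_j^{(n)}-h_j\|\,\|U\|_\infty\) gives uniform a.e.\ convergence where the paper uses pointwise convergence plus dominated convergence, and the announced ``induction on \(r\)'' is really just the coordinate-by-coordinate induction already contained in the closure lemma.
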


\begin{proof}
We first prove the assertion under the additional assumption that \(\mathbf H\)
is a \(Y_0\)-decorated test graph. For \(r=0\), we have
\(C_0=\tau_{\mathbf H}^U\), and hence
\(C_0\in\mathscr R_U(Y_0)\subset L^\infty(\mathcal A_U)\).

Now fix \(1\le r\le N\). Let \(\mathcal D\) be the set of all tuples
\((g_1,\ldots,g_r)\in\bigl(L^\infty(\mathcal A_U)\bigr)^r\) such that the
corresponding function \(C_r\) belongs to \(L^\infty(\mathcal A_U)\). We show
that \(\mathcal D\) satisfies the hypotheses of the coordinatewise closure
lemma with \(\mathscr R=\mathscr R_U(Y_0)\), \(\mathcal A=\mathcal A_U\),
and \(m=r\).

First take \(g_i=\tau_{\mathbf K_i}^U\) for \(i=1,\ldots,r\), where each
\(\mathbf K_i\) is a nonadjacent two-labelled \(Y_0\)-decorated test graph.
By the finite edge replacement lemma, \(C_r\in\mathscr R_U(Y_0)\). Hence
\((\tau_{\mathbf K_1}^U,\ldots,\tau_{\mathbf K_r}^U)\in\mathcal D\). By
linearity of the integral, after expanding finite linear combinations, we get
\(\mathscr R_U(Y_0)^r\subset\mathcal D\). The coordinate sections of
\(\mathcal D\) are vector spaces, again by linearity of the integral.

They are also closed under bounded pointwise a.e. limits. Fix all coordinates
except the \(i\)-th one, and suppose that the tuples obtained by replacing
\(g_i\) with \(g_i^{(n)}\) belong to \(\mathcal D\), where
\(g_i^{(n)}\to g_i\) pointwise a.e. on \([0,1]^2\) and
\(\sup_n\|g_i^{(n)}\|_\infty<\infty\). Let \(C_r^{(n)}\) be the function
obtained by using \(g_i^{(n)}\) instead of \(g_i\). For a.e. fixed external
variables \((x_{\rho_1},x_{\rho_2})\), the corresponding integrands converge
pointwise a.e. to the integrand defining \(C_r\). These integrands are
uniformly \(L^\infty\)-bounded, since \(U\) and \(g_j\) for \(j\neq i\) are
bounded and \(g_i^{(n)}\) is uniformly \(L^\infty\)-bounded. Hence the
dominated convergence theorem gives \(C_r^{(n)}\to C_r\) pointwise a.e. on
\([0,1]^2\).

Each \(C_r^{(n)}\) is \(\mathcal A_U\)-measurable. Since \(\mathcal A_U\)
is complete, the a.e. limit \(C_r\) is also \(\mathcal A_U\)-measurable.
Moreover, \(C_r\) is bounded by the boundedness of \(U\) and of
\(g_1,\ldots,g_r\). Hence \(C_r\in L^\infty(\mathcal A_U)\). Thus the
sections are closed under bounded pointwise a.e. limits. By the coordinatewise
closure lemma,
\(\mathcal D=\bigl(L^\infty(\mathcal A_U)\bigr)^r\). Therefore the assertion
holds for every \(r=0,1,\ldots,N\) when \(\mathbf H\) is \(Y_0\)-decorated.

Now let \(\mathbf H\) be \(Y\)-decorated. For each edge \(e_j\), choose
\(h_j^{(n)}\in Y_0\) such that \(h_j^{(n)}\to h_j\) in norm. Since each
sequence is convergent, it is bounded. Let \(\mathbf H^{(n)}\) be the
\(Y_0\)-decorated test graph with the same underlying graph, the same labels,
and edge decorations \(h_j^{(n)}\) on \(e_j\). For the fixed \(r\), let
\(C_r^{(n)}\) be the function obtained from \(\mathbf H^{(n)}\) by the same
formula as \(C_r\). By the \(Y_0\)-decorated case already proved,
\(C_r^{(n)}\in L^\infty(\mathcal A_U)\) for every \(n\ge1\).

Since \(h_j^{(n)}\to h_j\) in norm and \(U\) is bounded, the corresponding
edge factors converge pointwise a.e. For a.e. fixed external variables, the
integrands defining \(C_r^{(n)}\) converge pointwise a.e. to the integrand
defining \(C_r\). These integrands are uniformly \(L^\infty\)-bounded,
because \(U\), the \(g_i\)'s, and the approximating decorations
\(h_j^{(n)}\) are bounded for all \(j\). Hence the dominated convergence
theorem gives \(C_r^{(n)}\to C_r\) pointwise a.e. on \([0,1]^2\).

Each \(C_r^{(n)}\) is \(\mathcal A_U\)-measurable. Since \(\mathcal A_U\)
is complete, \(C_r\) is also \(\mathcal A_U\)-measurable. Finally, \(C_r\)
is bounded by the boundedness of \(U\), of \(g_1,\ldots,g_r\), and of the
finitely many decorations \(h_1,\ldots,h_N\). Hence
\(C_r\in L^\infty(\mathcal A_U)\). This proves the lemma.
\end{proof}

\subsection{Proof of the bounded characterization}

In this subsection, we denote Lebesgue measure on \([0,1]^2\) by \(\mu\).
Recall that, for a Banach space \(X\), a subspace \(Y\subset X^*\) is \emph{norming} if
\(\|x\|=\sup_{y\in B_Y}|\langle y,x\rangle|\) for every \(x\in X\).
For such a subspace, the canonical map \(J:X\to Y^*\), defined by
\(\langle Jx,y\rangle=\langle y,x\rangle\), is an isometry, and
\(Y\) is weak-* dense in \(X^*\). If \(X\) is separable, then
\(B_{X^*}\) is weak-* compact and metrizable. Hence we may choose a
countable weak-* dense subset \(D\subset B_{X^*}\) and let
\(Y=\overline{\operatorname{span}}D\). Since \(D\subset B_Y\), this gives a
closed separable norming subspace \(Y\subset X^*\).

\begin{lemma}
Let \(X\) be a separable weakly sequentially complete Banach space, let
\(Y\subset X^*\) be a closed separable norming subspace, and let
\(J:X\to Y^*\) be the canonical map. Let
\(Y_0\subset Y\) be a countable norm-dense subset of \(Y\), and let
\((\mathbf G_n)\) be a sequence of \(X\)-decorated target graphs
satisfying the two assumptions in Definition~\ref{def:bgrp}.

Then there exists a bounded \(Y^*\)-valued weak-* graphon
\(U:[0,1]^2\to Y^*\) such that
\begin{enumerate}
\item For every \(Y\)-decorated test graph \(\mathbf F\), we have
\(t(\mathbf F,\mathbf G_n)\to t(\mathbf F,U)\).
\item For every \(r\in L^\infty(\mathcal A_U)\), we have
\(\int_{[0,1]^2}rU\,d\mu\in J(X)\), where
\(\mathcal A_U:=\sigma(\mathscr R_U(Y_0))\).
\end{enumerate}
Here the integral in (2) is understood in the weak-* sense.
\end{lemma}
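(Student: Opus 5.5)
The plan is to obtain \(U\) from Theorem~\ref{thm:kls-3.7} with predual \(Y\). Then we prove (2) first on the generating algebra \(\mathscr R_U(Y_0)\), using weak sequential completeness of \(X\), and extend to all of \(L^\infty(\mathcal A_U)\) with Lemma~\ref{lem:bounded-pointwise-closure}.

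\emph{Construction of \(U\).} Let \(W_n\) be the equal-step graphon of \(\mathbf G_n\) and put \(U_n:=J\circ W_n:[0,1]^2\to Y^*\). Since \(J\) is an isometry, each \(U_n\) is a weak-* graphon with \(\|U_n\|_p\le\|U_n\|_\infty\le C:=\sup_n\|\mathbf G_n\|_\infty\) for every \(p\). Moreover \(t(\mathbf F,U_n)=t(\mathbf F,\mathbf G_n)\) for every \(Y\)-decorated \(\mathbf F\), because \(Y\subset X^*\). These densities converge by hypothesis, so Theorem~\ref{thm:kls-3.7} (with \(B=Y\) and countable generating set \(Y_0\)) yields a weak-* graphon \(U\) satisfying (1) and \(\|U\|_p\le C\) for all finite \(p\). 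Letting \(p\to\infty\) gives \(\|U\|_\infty\le C\), so \(U\) is bounded. In particular, for \(r\in L^\infty([0,1]^2)\) the weak-* integral \(\int rU\,d\mu\in Y^*\), \(y\mapsto\int r\langle y,U\rangle\,d\mu\), is well defined and has norm at most \(C\|r\|_1\).

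\emph{Generators.} Let \((\mathbf F;\rho_1,\rho_2)\) be a nonadjacent two-labelled \(Y_0\)-decorated test graph. For \(\Phi\in X^*\), let \(\mathbf F^{\Phi}\) be the (unlabelled) test graph obtained by adding the edge \(\rho_1\rho_2\) decorated by \(\Phi\); this is legitimate precisely because of nonadjacency. Define
\[
a_n:=\mathbb E_{\varphi}\Bigl[g_n(\varphi(\rho_1\rho_2))\prod_{ij\in E(F)}\langle f(ij),g_n(\varphi(ij))\rangle\Bigr]\in X .
\]
Then \(\langle\Phi,a_n\rangle=t(\mathbf F^{\Phi},\mathbf G_n)\) for all \(\Phi\in X^*\), and this converges because \(\mathbf F^\Phi\) is \(X^*\)-decorated. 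Hence \((a_n)\) is weakly Cauchy, and by weak sequential completeness \(a_n\to a\) weakly for some \(a\in X\). For \(y\in Y\), Fubini and (1) give
\[
\Bigl\langle y,\int\tau^U_{\mathbf F}U\,d\mu\Bigr\rangle=t(\mathbf F^{y},U)=\lim_n t(\mathbf F^{y},\mathbf G_n)=\langle y,a\rangle=\langle Ja,y\rangle .
\]
So \(\int\tau_{\mathbf F}^UU\,d\mu=Ja\in J(X)\). By linearity, \(\int rU\,d\mu\in J(X)\) for every \(r\in\mathscr R_U(Y_0)\).

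\emph{Extension.} Let \(\mathcal D:=\{r\in L^\infty(\mathcal A_U):\int rU\,d\mu\in J(X)\}\). This is a vector subspace containing \(\mathscr R_U(Y_0)\), and \(\mathscr R_U(Y_0)\) is a countably generated unital subalgebra by the earlier lemma.

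The step I expected to be the obstacle is closure under bounded a.e. limits. Weak-* convergence of \(\int r_nU\,d\mu\) only tests against \(Y\), which is too little to invoke weak sequential completeness. Boundedness of \(U\) gives something much stronger. If \(r_n\to r\) a.e. with \(\sup_n\|r_n\|_\infty<\infty\), then dominated convergence gives \(\|r_n-r\|_1\to0\), and hence
\[
\Bigl\|\int r_nU\,d\mu-\int rU\,d\mu\Bigr\|_{Y^*}\le C\|r_n-r\|_1\to0 .
\]
Since \(J\) is an isometry and \(X\) is complete, \(J(X)\) is norm closed in \(Y^*\), so \(r\in\mathcal D\).

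Working on the completed Lebesgue space, Lemma~\ref{lem:bounded-pointwise-closure} then gives \(\mathcal D=L^\infty(\mathcal A_U)\), which is (2).
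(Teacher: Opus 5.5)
Your proposal is correct and follows essentially the same route as the paper's proof: you obtain \(U\) from Theorem~\ref{thm:kls-3.7} and get boundedness by letting \(p\to\infty\); you then identify \(\int\tau^U_{\mathbf F}U\,d\mu\) with \(Ja\), where \(a\) is the weak limit of a weakly Cauchy sequence produced by adding the edge \(\rho_1\rho_2\); finally you extend via norm-closedness of \(J(X)\) and Lemma~\ref{lem:bounded-pointwise-closure}. The only difference is cosmetic: your \(a_n\), written as an expectation over vertex maps, is exactly the paper's Bochner integral \(\int\tau^{W_n}_{\mathbf F}W_n\,d\mu\).
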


\begin{proof}
Set \(M:=\sup_n\|\mathbf G_n\|_\infty\). For each \(n\), let
\(W_n:[0,1]^2\to X\) be the equal-step graphon associated with
\(\mathbf G_n\). Then \(\|W_n\|_{L^\infty(X)}\le M\). Consequently,
\(JW_n:[0,1]^2\to Y^*\) is a \(Y^*\)-valued weak-* graphon satisfying
\(\|JW_n\|_{L^\infty(Y^*)}\le M\).

Every \(Y\)-decorated test graph is also an \(X^*\)-decorated test graph.
Hence, for every \(Y\)-decorated test graph \(\mathbf F\), the sequence
\(t(\mathbf F,\mathbf G_n)\) converges. Since
\(t(\mathbf F,JW_n)=t(\mathbf F,W_n)=t(\mathbf F,\mathbf G_n)\), the sequence
\(t(\mathbf F,JW_n)\) also converges. It follows from
Theorem~\ref{thm:kls-3.7} that there exists a \(Y^*\)-valued weak-* graphon
\(U:[0,1]^2\to Y^*\) such that \(t(\mathbf F,JW_n)\to t(\mathbf F,U)\) for
every \(Y\)-decorated test graph \(\mathbf F\).

We next show that \(\|U\|_{L^\infty(Y^*)}\le M\). The construction in
Theorem~\ref{thm:kls-3.7} gives \(\|U\|_{L^p(Y^*)}\le M\) for every
\(1\le p<\infty\). For \(\varepsilon>0\), put
\[
E_\varepsilon
:=
\{(s,t)\in[0,1]^2:\|U(s,t)\|_{Y^*}>M+\varepsilon\}.
\]
Then, for every \(1\le p<\infty\),
\[
(M+\varepsilon)^p\mu(E_\varepsilon)
\le
\int_{E_\varepsilon}\|U(s,t)\|_{Y^*}^p\,d\mu(s,t)
\le
\|U\|_{L^p(Y^*)}^p
\le
M^p.
\]
Therefore
\(\mu(E_\varepsilon)\le(M/(M+\varepsilon))^p\). Letting \(p\to\infty\), we
obtain \(\mu(E_\varepsilon)=0\). Since this holds for every
\(\varepsilon>0\), it follows that \(\|U\|_{L^\infty(Y^*)}\le M\).

We now prove that \(\int rU\,d\mu\in J(X)\) for every
\(r\in\mathscr R_U(Y_0)\). By the definition of \(\mathscr R_U(Y_0)\), it
suffices to consider \(r=\tau_{\mathbf F}^U\), where
\((\mathbf F;\rho_1,\rho_2)\) is a nonadjacent two-labelled
\(Y_0\)-decorated test graph. Relabelling the vertices if necessary, we may
assume that the labelled vertices are \(1\) and \(2\).

Since \(U\) and \(\tau_{\mathbf F}^U\) are bounded, the weak-* integral
\(\int_{[0,1]^2}\tau_{\mathbf F}^U(s,t)U(s,t)\,d\mu(s,t)\in Y^*\) is well
defined by
\(\langle y,\int_{[0,1]^2}\tau_{\mathbf F}^U U\,d\mu\rangle
=\int_{[0,1]^2}\tau_{\mathbf F}^U(s,t)\langle y,U(s,t)\rangle\,d\mu(s,t)\)
for \(y\in Y\).
For each \(n\), define
\(a_n:=\int_{[0,1]^2}\tau_{\mathbf F}^{W_n}(s,t)W_n(s,t)\,d\mu(s,t)\in X\).
Since \(W_n\) is an \(X\)-valued simple function and
\(\tau_{\mathbf F}^{W_n}\) is a bounded measurable scalar function, their
product is Bochner measurable. It is also bounded, and hence Bochner
integrable. Consequently, \(a_n\) is well defined.

For every \(x^*\in X^*\), the definition of the Bochner integral gives
\begin{equation}
\label{eq:root-slice-density}
\langle x^*,a_n\rangle
=
\int_{[0,1]^2}
\tau_{\mathbf F}^{W_n}(s,t)
\langle x^*,W_n(s,t)\rangle\,d\mu(s,t)
=
t(\mathbf F+e_{12}^{x^*},W_n).
\end{equation}
Here \(\mathbf F+e_{12}^{x^*}\) is obtained by adding the edge between the two
labelled vertices and decorating it by \(x^*\). Since \(\mathbf F\) is
nonadjacent, this is a well-defined \(X^*\)-decorated test graph. Moreover,
\(t(\mathbf F+e_{12}^{x^*},W_n)
=t(\mathbf F+e_{12}^{x^*},\mathbf G_n)\). The right-hand side converges by
assumption for every \(x^*\in X^*\). Hence, it follows from
\eqref{eq:root-slice-density} that \((a_n)\) is weakly Cauchy in \(X\).

Since \(X\) is weakly sequentially complete, there exists \(a\in X\) such that
\(a_n\rightharpoonup a\) weakly in \(X\). Now take \(y\in Y\). Then
\(\mathbf F+e_{12}^{y}\) is a \(Y\)-decorated test graph. Using
\eqref{eq:root-slice-density} with \(x^*=y\), we obtain
\(\langle y,a\rangle=\lim_{n\to\infty}\langle y,a_n\rangle
=\lim_{n\to\infty}t(\mathbf F+e_{12}^{y},W_n)
=\lim_{n\to\infty}t(\mathbf F+e_{12}^{y},JW_n)
=t(\mathbf F+e_{12}^{y},U)\). Expanding the last density gives
\(t(\mathbf F+e_{12}^{y},U)
=\int_{[0,1]^2}\tau_{\mathbf F}^U(s,t)
\langle y,U(s,t)\rangle\,d\mu(s,t)\). By the definition of the weak-*
integral, this means that
\(\langle y,a\rangle
=\langle y,\int_{[0,1]^2}\tau_{\mathbf F}^U U\,d\mu\rangle\) for
\(y\in Y\). Equivalently,
\(J(a)=\int_{[0,1]^2}\tau_{\mathbf F}^U U\,d\mu\) in \(Y^*\). Thus
\(\int_{[0,1]^2}\tau_{\mathbf F}^U U\,d\mu\in J(X)\). By linearity, the same
conclusion holds for every \(r\in\mathscr R_U(Y_0)\), namely
\(\int_{[0,1]^2}rU\,d\mu\in J(X)\).

We now extend the conclusion from \(\mathscr R_U(Y_0)\) to
\(L^\infty(\mathcal A_U)\). Let \(\mathcal D\) be the set of all
\(r\in L^\infty(\mathcal A_U)\) such that
\(\int_{[0,1]^2}rU\,d\mu\in J(X)\). The set \(\mathcal D\) is a vector
subspace of \(L^\infty(\mathcal A_U)\), by linearity of the weak-* integral
and the fact that \(J(X)\) is a vector subspace of \(Y^*\). The preceding
argument shows that \(\mathscr R_U(Y_0)\subset\mathcal D\).

It remains to verify that \(\mathcal D\) is closed under bounded pointwise
a.e. limits. Let \(r_n\in\mathcal D\) satisfy
\(\sup_n\|r_n\|_\infty<\infty\), and suppose that \(r_n\to r\) pointwise a.e.
for some \(r\in L^\infty(\mathcal A_U)\). By the dominated convergence
theorem, \(\|r_n-r\|_{L^1}\to0\). Moreover,
\(\|\int_{[0,1]^2}(r_n-r)U\,d\mu\|_{Y^*}
\le\|U\|_{L^\infty(Y^*)}\|r_n-r\|_{L^1}\), and hence
\(\int_{[0,1]^2}r_nU\,d\mu\to\int_{[0,1]^2}rU\,d\mu\) in the norm of \(Y^*\).

Since \(J(X)\) is norm closed in \(Y^*\) and each
\(\int_{[0,1]^2}r_nU\,d\mu\) belongs to \(J(X)\), the norm limit
\(\int_{[0,1]^2}rU\,d\mu\) also belongs to \(J(X)\). Thus
\(r\in\mathcal D\).

Since \(\mathscr R_U(Y_0)\) is a countably generated unital subalgebra and
\(\mathcal A_U=\sigma(\mathscr R_U(Y_0))\), the subspace
\(\mathcal D\subset L^\infty(\mathcal A_U)\) satisfies the hypotheses of
Lemma~\ref{lem:bounded-pointwise-closure}. Therefore
\(\mathcal D=L^\infty(\mathcal A_U)\). Consequently,
\(\int_{[0,1]^2}rU\,d\mu\in J(X)\) for every
\(r\in L^\infty(\mathcal A_U)\). This proves condition (2) and completes the
proof.
\end{proof}

\begin{lemma}
\label{lem:rnp-y-test-representation}
Let \(X\) be a separable Banach space which is weakly sequentially complete
and has the Radon--Nikodým property. Let \(Y\subset X^*\) be a closed separable
norming subspace, and let \((\mathbf G_n)\) be a sequence of
\(X\)-decorated target graphs satisfying the two assumptions in
Definition~\ref{def:bgrp}.

Then there exists a bounded \(X\)-valued Bochner graphon
\(W:[0,1]^2\to X\) such that
\(t(\mathbf F,\mathbf G_n)\to t(\mathbf F,W)\) for every
\(Y\)-decorated test graph \(\mathbf F\).
\end{lemma}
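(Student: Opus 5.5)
Choose a countable norm-dense subset \(Y_0\subset Y\). The preceding lemma then supplies a bounded weak-* graphon \(U:[0,1]^2\to Y^*\) with two properties. First, \(t(\mathbf F,\mathbf G_n)\to t(\mathbf F,U)\) for every \(Y\)-decorated test graph \(\mathbf F\). Second, \(\int rU\,d\mu\in J(X)\) for every \(r\in L^\infty(\mathcal A_U)\), where \(\mathcal A_U=\sigma(\mathscr R_U(Y_0))\) and \(\|U\|_\infty\le M\). The plan is to produce the required \(W\) as a Radon--Nikod\'ym derivative with respect to the sub-\(\sigma\)-algebra \(\mathcal A_U\). We do not work with the full Lebesgue \(\sigma\)-algebra, where \(U\) itself may not be \(J(X)\)-valued.

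Define \(\nu:\mathcal A_U\to X\) by \(\nu(A):=J^{-1}\bigl(\int \mathbf 1_A U\,d\mu\bigr)\). This is well defined by property (2) and because \(J\) is an isometry onto its closed range. Moreover \(\|\nu(A)\|\le M\mu(A)\), so \(\nu\) is countably additive, of bounded variation, and \(\nu\ll\mu|_{\mathcal A_U}\). Since \(X\) has the Radon--Nikod\'ym property with respect to the finite measure space \(([0,1]^2,\mathcal A_U,\mu)\), there is \(W_0\in L^1(\mathcal A_U;X)\) with \(\nu(A)=\int_A W_0\,d\mu\). The bound \(\|\nu(A)\|\le M\mu(A)\) gives \(\|W_0\|\le M\) a.e., by the usual averaging argument. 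By construction, \(\langle y,W_0\rangle=\mathbb E(\langle y,U\rangle\mid\mathcal A_U)\) a.e. for every \(y\in Y\). Symmetry comes next. The flip \((s,t)\mapsto(t,s)\) preserves \(\mathcal A_U\), because swapping the two labels of a nonadjacent two-labelled graph maps generators to generators, and \(U\) is symmetric. Hence \(W:=\tfrac12(W_0+W_0\circ\mathrm{flip})\) still satisfies the conditional-expectation identity, is symmetric and bounded, and is \(\mathcal A_U\)-measurable. It is also Bochner measurable, as an \(\mathcal A_U\)-measurable Bochner function with values in the separable space \(X\).

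The main step, and the expected obstacle, is to show \(t(\mathbf F,W)=t(\mathbf F,U)\) for every \(Y\)-decorated test graph \(\mathbf F\). I would replace the edges of \(F\) one at a time, with \(N=|E(F)|\) and \(e_1,\dots,e_N\) its edges. Consider the intermediate quantity in which edges \(e_1,\dots,e_r\) carry \(g_i=\langle h_i,W\rangle\in L^\infty(\mathcal A_U)\) and the remaining edges carry \(\langle h_j,U\rangle\). To pass from \(r-1\) to \(r\), single out the edge \(e_r=a_rb_r\) and regard \((\mathbf F-e_r;a_r,b_r)\) as a nonadjacent two-labelled graph. Integrating out all other variables gives a function \(C(x_{a_r},x_{b_r})\). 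This \(C\) is exactly of the form treated in Lemma~\ref{lem:rooted-integral-measurability}, with the first \(r-1\) edges carrying the \(g_i\). So \(C\in L^\infty(\mathcal A_U)\). The full density equals \(\int C\,\langle h_r,U\rangle\,d\mu\), which equals \(\int C\,\mathbb E(\langle h_r,U\rangle\mid\mathcal A_U)\,d\mu=\int C\,\langle h_r,W\rangle\,d\mu\). Here the ordering of the labelled vertices must match the orientation \(a_r,b_r\), which is harmless since \(W\) and \(U\) are symmetric. After \(N\) steps, \(t(\mathbf F,U)=t(\mathbf F,W)\). Combined with (1) of the preceding lemma, this gives \(t(\mathbf F,\mathbf G_n)\to t(\mathbf F,W)\) for all \(Y\)-decorated \(\mathbf F\). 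The delicate points are checking that Lemma~\ref{lem:rooted-integral-measurability} applies with the already-replaced edges in arbitrary positions (relabel the edge order so that they come first), and verifying the flip-invariance of \(\mathcal A_U\).
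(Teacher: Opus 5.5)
Your proposal is correct and follows the paper's proof essentially step for step: the vector measure $\nu(A)=J^{-1}(\int_A U\,d\mu)$ on $\mathcal A_U$, its Radon--Nikod\'ym density, flip-invariance of $\mathcal A_U$ obtained by swapping the two labels, and the edge-by-edge replacement $T_r=T_{r+1}$, using Lemma~\ref{lem:rooted-integral-measurability} to place the partial integral $C_r$ in $L^\infty(\mathcal A_U)$. The only differences are cosmetic: you symmetrize $W_0$ where the paper invokes uniqueness of the Radon--Nikod\'ym derivative (your own computation already shows $W_0\circ\vartheta$ is a second density, so $W_0$ is symmetric), and you phrase the key identity as a conditional expectation rather than extending $J(\int CW\,d\mu)=\int CU\,d\mu$ from simple functions.
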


\begin{proof}
Choose a countable norm-dense subset \(Y_0\subset Y\), and let
\(J:X\to Y^*\) be the canonical embedding defined by
\(\langle Jx,y\rangle=\langle y,x\rangle\).
By the preceding lemma, there exists a bounded \(Y^*\)-valued weak-* graphon
\(U:[0,1]^2\to Y^*\) such that
\(t(\mathbf F,\mathbf G_n)\to t(\mathbf F,U)\) for every
\(Y\)-decorated test graph \(\mathbf F\), and
\(\int_{[0,1]^2}rU\,d\mu\in J(X)\) for every
\(r\in L^\infty(\mathcal A_U)\), where
\(\mathcal A_U:=\sigma(\mathscr R_U(Y_0))\).
Set \(M:=\|U\|_{L^\infty(Y^*)}\). For \(A\in\mathcal A_U\), define
\[
\nu(A):=J^{-1}\left(\int_AU\,d\mu\right).
\]
This is well defined because \(\mathbf 1_A\in L^\infty(\mathcal A_U)\).
Since \(J:X\to Y^*\) is an isometry,
\[
\begin{aligned}
\|\nu(A)\|_X
&=
\left\|\int_AU\,d\mu\right\|_{Y^*} \\
&\le
\int_A\|U(s,t)\|_{Y^*}\,d\mu(s,t)
\le M\mu(A).
\end{aligned}
\]

This estimate also shows that \(\nu\) is countably additive in the norm of
\(X\). Indeed, if \(A_1,A_2,\ldots\in\mathcal A_U\) are pairwise disjoint
and \(A=\bigcup_{k=1}^\infty A_k\), then finite additivity and the preceding
estimate give
\[
\left\|
\nu(A)-\sum_{k=1}^n\nu(A_k)
\right\|_X
\le
M\mu\left(\bigcup_{k>n}A_k\right)
\longrightarrow0.
\]
Moreover, for every finite measurable partition \(A=\bigcup_iA_i\),
\[
\sum_i\|\nu(A_i)\|_X
\le
M\sum_i\mu(A_i)
=
M\mu(A).
\]
Hence \(\nu\) has bounded variation, \(|\nu|(A)\le M\mu(A)\), and
\(\nu\ll\mu\).

Since \(X\) has the Radon--Nikodým property, there exists an
\(\mathcal A_U\)-Bochner measurable function \(W:[0,1]^2\to X\) such that
\(\nu(A)=\int_AW\,d\mu\) for every \(A\in\mathcal A_U\). Furthermore, the
variation formula for a vector measure with Bochner density
\cite[(2.2), p.~43]{pisier2016martingales} gives
\[
\int_A\|W\|_X\,d\mu
=
|\nu|(A)
\le M\mu(A).
\]
It follows that \(\|W\|_X\le M\) a.e. Hence \(W\) is bounded.

We next verify that \(\vartheta^{-1}(A)\in\mathcal A_U\) for every
\(A\in\mathcal A_U\), where \(\vartheta(s,t):=(t,s)\). Let
\[
\mathcal C
:=
\{A\in\mathcal A_U:\vartheta^{-1}(A)\in\mathcal A_U\}.
\]
Since inverse images commute with complements and countable unions,
\(\mathcal C\) is a sub-sigma-algebra of \(\mathcal A_U\).
Let \((\mathbf F;\rho_1,\rho_2)\) be a nonadjacent two-labelled
\(Y_0\)-decorated test graph, and put
\(\mathbf F':=(\mathbf F;\rho_2,\rho_1)\). Then
\(\tau_{\mathbf F}^U\circ\vartheta=\tau_{\mathbf F'}^U\) a.e.

Consequently, for every Borel set \(B\subset\mathbb R\), the set
\(\vartheta^{-1}((\tau_{\mathbf F}^U)^{-1}(B))\) differs from
\((\tau_{\mathbf F'}^U)^{-1}(B)\) by a null set. Since \(\mathcal A_U\)
contains \((\tau_{\mathbf F'}^U)^{-1}(B)\) and all null sets, it follows
that \((\tau_{\mathbf F}^U)^{-1}(B)\in\mathcal C\). Moreover, if
\(N\subset[0,1]^2\) is a null set, then \(\vartheta^{-1}(N)\) is also null
because \(\vartheta\) preserves Lebesgue measure. Thus \(\mathcal C\)
contains all null sets. Since the sigma-algebra \(\mathcal A_U\) is
generated by the sets \((\tau_{\mathbf F}^U)^{-1}(B)\) and the null sets,
it follows that \(\mathcal C=\mathcal A_U\).

Consequently, \(W\circ\vartheta\) is also \(\mathcal A_U\)-Bochner
measurable. For every \(A\in\mathcal A_U\), the invariance of Lebesgue
measure and the symmetry of \(U\) give
\[
\begin{aligned}
\int_AW\circ\vartheta\,d\mu
&=
\int_{\vartheta^{-1}(A)}W\,d\mu \\
&=
J^{-1}\left(\int_{\vartheta^{-1}(A)}U\,d\mu\right) \\
&=
J^{-1}\left(\int_AU\circ\vartheta\,d\mu\right) \\
&=
J^{-1}\left(\int_AU\,d\mu\right)
=
\int_AW\,d\mu.
\end{aligned}
\]
Thus \(W\) and \(W\circ\vartheta\) are Bochner densities of the same vector
measure. By uniqueness of the Radon--Nikodým derivative,
\(W=W\circ\vartheta\) a.e. Hence \(W\) is symmetric and therefore is an
\(X\)-valued Bochner graphon.

The defining identity for \(W\) gives
\(J(\int_AW\,d\mu)=\int_AU\,d\mu\) for every \(A\in\mathcal A_U\). By
linearity, the same identity holds with \(\mathbf 1_A\) replaced by an
\(\mathcal A_U\)-measurable simple function. Approximating an arbitrary
bounded \(\mathcal A_U\)-measurable function by bounded simple functions,
we obtain
\[
J\left(\int_{[0,1]^2}CW\,d\mu\right)
=
\int_{[0,1]^2}CU\,d\mu
\qquad
\bigl(C\in L^\infty(\mathcal A_U)\bigr).
\]
Evaluating this identity at \(y\in Y\), we get
\[
\int_{[0,1]^2}C(s,t)\langle y,W(s,t)\rangle\,d\mu(s,t)
=
\int_{[0,1]^2}C(s,t)\langle y,U(s,t)\rangle\,d\mu(s,t).
\tag{*}
\]

It remains to compare the density of a \(Y\)-decorated test graph
\(\mathbf F\) in \(U\) and \(W\). Write \(\mathbf F=(F,f)\), enumerate its
edges as \(E(F)=\{e_1,\ldots,e_N\}\), and write \(e_j=a_jb_j\) and
\(y_j:=f_{e_j}\). For \(0\le r\le N\), define
\[
T_r
:=
\int_{[0,1]^{V(F)}}
\left(
\prod_{i=1}^{r}
\langle y_i,W(x_{a_i},x_{b_i})\rangle
\right)
\left(
\prod_{j=r+1}^{N}
\langle y_j,U(x_{a_j},x_{b_j})\rangle
\right)
\prod_{i\in V(F)} dx_i.
\]
Then \(T_0=t(\mathbf F,U)\) and \(T_N=t(\mathbf F,W)\). To prove
\(T_0=T_N\), it is enough to show that \(T_r=T_{r+1}\) for
\(r=0,\ldots,N-1\).

Fix \(0\le r<N\). By Fubini's theorem, the definitions of \(T_r\) and
\(T_{r+1}\) give
\[
\begin{aligned}
T_r
&=
\int_{[0,1]^2}
C_r(x_{a_{r+1}},x_{b_{r+1}})\,
\langle y_{r+1},U(x_{a_{r+1}},x_{b_{r+1}})\rangle
\,dx_{a_{r+1}}\,dx_{b_{r+1}}, \\
T_{r+1}
&=
\int_{[0,1]^2}
C_r(x_{a_{r+1}},x_{b_{r+1}})\,
\langle y_{r+1},W(x_{a_{r+1}},x_{b_{r+1}})\rangle
\,dx_{a_{r+1}}\,dx_{b_{r+1}},
\end{aligned}
\]
where
\[
\begin{aligned}
C_r(x_{a_{r+1}},x_{b_{r+1}})
&:=
\int_{[0,1]^{V(F)\setminus\{a_{r+1},b_{r+1}\}}}
\left(
\prod_{i=1}^{r}
\langle y_i,W(x_{a_i},x_{b_i})\rangle
\right) \\
&\quad\times
\left(
\prod_{j=r+2}^{N}
\langle y_j,U(x_{a_j},x_{b_j})\rangle
\right)
\prod_{v\in V(F)\setminus\{a_{r+1},b_{r+1}\}} dx_v.
\end{aligned}
\]

We next show that \(C_r\in L^\infty(\mathcal A_U)\). Let
\((\mathbf H_{r+1};a_{r+1},b_{r+1})\) be the two-labelled \(Y\)-decorated test
graph obtained from \(\mathbf F\) by deleting \(e_{r+1}\) and regarding
\(a_{r+1}\) and \(b_{r+1}\) as the two labelled vertices.
Since \(F\) is simple, this graph is nonadjacent. Enumerate the edges of
\(H_{r+1}\) in the order \(e_1,\ldots,e_r,e_{r+2},\ldots,e_N\). For
\(i=1,\ldots,r\), put
\(g_i(s,t):=\langle y_i,W(s,t)\rangle\). Since \(W\) is
\(\mathcal A_U\)-Bochner measurable, \(g_i\in L^\infty(\mathcal A_U)\).
Applying Lemma~\ref{lem:rooted-integral-measurability} to
\(\mathbf H_{r+1}\) with the functions \(g_1,\ldots,g_r\) gives
\(C_r\in L^\infty(\mathcal A_U)\).

We may therefore apply \((*)\) with \(C=C_r\) and
\(y=y_{r+1}\). It follows that \(T_r=T_{r+1}\). Repeating this for
\(r=0,\ldots,N-1\), we obtain
\[
t(\mathbf F,U)=T_0=T_1=\cdots=T_N=t(\mathbf F,W).
\]
Finally, the preceding lemma gives
\(t(\mathbf F,\mathbf G_n)\to t(\mathbf F,U)\). Therefore
\(t(\mathbf F,\mathbf G_n)\to t(\mathbf F,W)\) for every
\(Y\)-decorated test graph \(\mathbf F\). This proves the lemma.
\end{proof}

We now complete the proof of Theorem~\ref{thm:bounded-characterization}.

\begin{proof}[Proof of the bounded characterization]
Necessity follows from Proposition~\ref{prop:bounded-inherited}(1).
Conversely, assume that
\(X\) is weakly sequentially complete and has the Radon--Nikodým property. By
Proposition~\ref{prop:bounded-inherited}(2), it is enough to prove the bounded
representation property for every closed separable subspace of \(X\). Such
subspaces inherit weak sequential completeness, and the Radon--Nikodým
property passes to closed subspaces
\cite[Theorem~III.3.2]{diestelUhl1977vector}. Hence we may assume that \(X\) is
separable.

Let \((\mathbf G_n)\) be a sequence of \(X\)-decorated target graphs
satisfying the two assumptions in Definition~\ref{def:bgrp}. Since \(X\) is
separable, choose a closed separable norming subspace \(Y\subset X^*\). By
Lemma~\ref{lem:rnp-y-test-representation}, there exists a bounded
\(X\)-valued Bochner graphon \(W\) such that
\(t(\mathbf F,\mathbf G_n)\to t(\mathbf F,W)\) for every \(Y\)-decorated
test graph \(\mathbf F\). Since \(Y\) is norming, it is weak-* dense in
\(X^*\). By Lemma~\ref{lem:full-dual-upgrade}, the same
convergence holds for every \(X^*\)-decorated test graph.
Thus \(W\) represents all the density limits of \((\mathbf G_n)\). This
proves the theorem.
\end{proof}

\bibliographystyle{amsplain}
\bibliography{refs}

\end{document}